\newif\ifpictures
\newcommand{\NormalCone}[1]{\ensuremath{\operatorname{NC}{(#1)}}}
\newcommand{\NormalFan}[1]{\ensuremath{\operatorname{NF}{(#1)}}}
\newcommand{\Amoeba}[1]{\ensuremath{\mathcal{A}{(#1)}}}
\newcommand{\Minkowski}[3]{\ensuremath{\sum_{i = #2}^{#3}{#1_i}}}
\DeclareMathOperator{\Trop}{Trop}
\DeclareMathOperator{\Cayley}{Cay}
\DeclareMathOperator{\x}{\Vector{x}}
\DeclareMathOperator{\w}{\Vector{w}}
\author{Alperen A. Erg\"{u}r}
\address{Alperen A. Erg\"{u}r, University of Texas at San Antonio, One UTSA Circle, San Antonio, Texas, USA, 78249
\medskip}
\email{alperen.ergur@utsa.edu}
\author{Timo de Wolff}
\address{Timo de Wolff, Technische Universit\"at Braunschweig, Institut f\"ur Analysis und Algebra, AG Algebra, Universit\"atsplatz 2, 38106 Braunschweig,
 Germany\medskip}
\email{t.de-wolff@tu-braunschweig.de}
\subjclass[2010]{14P05, 14P25, 14T05, 68R05, 52B11, 65D99, 65Y20 
}
\keywords{$A$-discriminant, Amoeba, Entropy, Fewnomial theory, homotopy continuation, Viro's Patchworking}
\title{A Polyhedral Homotopy Algorithm For Real Zeros}
\begin{document}

\begin{abstract}
We design a homotopy continuation algorithm, that is based on Viro's patchworking method, for finding real zeros of sparse polynomial systems. The algorithm is targeted  for polynomial systems with coefficients satisfying certain concavity conditions, it tracks optimal number of solution paths, and it operates entirely over the reals. In more technical terms; we design an algorithm that correctly counts and finds the real zeros of polynomial systems that are located in the unbounded components of the complement of the underlying A-discriminant amoeba. We provide a detailed exposition of connections between Viro's patchworking method, convex geometry of A-discriminant amoeba complements, and computational real algebraic geometry. 

\end{abstract}

\maketitle


\vspace{-0.4 in}

\section{Introduction}
\label{section:Introduction}
Let $\struc{\Vector{p}}=(p_1,p_2,\ldots,p_n)$ be a system of sparse polynomials in $\struc{\C[\Vector{x}]} = \C[x_1,\ldots,x_n]$ with support sets $A_1,A_2,\ldots,A_n \subseteq \Z^n$. More precisely, let
\begin{align*}
	 \struc{p_i} \ := \ \sum_{\alpha \in A_i} c_{\alpha}^{(i)} \Vector{x}^{\alpha} \; , \; \text{for} \; i=1,2,\ldots,n
\end{align*}
where $\Vector{x}^{\alpha}:=x_1^{\alpha_1} x_2^{\alpha_2} \ldots x_n^{\alpha_n}$. Bernstein's theorem from 1975 \cite{bernstein} shows that for generic choice of coefficients of $p_i$ the number of zeros of $\Vector{p}$ on $(\C^{*})^n$ equals to the \textit{\struc{mixed volume}} $\struc{\mathcal{M}(Q_1,Q_2,Q_3,\ldots,Q_n)}$ of the Newton polytopes $\struc{Q_i} := \conv(A_i)$. 

In the early 90's the \textit{\struc{polyhedral homotopy}} method was developed as an algorithmic counterpart of Bernstein's theorem \cite{huberpolyhedral}. The main idea of the polyhedral homotopy method is to continuously deform a given polynomial system to another ``easy'' system, that can be solved by pure combinatorics, and then trace back the change in the solution set with numerical path trackers. This geometric idea is colloquially referred to as \struc{\textit{toric deformation}}, and the ``easy'' systems with combinatorial structure are referred to as the systems at the \struc{\textit{toric limit}}.  Polyhedral homotopy method is currently implemented in several packages such as \textsc{PHCPack} \cite{PHCpack}, \textsc{Hom4ps-3} \cite{Hom4PS}, \textsc{pss5} \cite{Pss5},  and \textsc{HomotopyContinuation.jl} \cite{HomotopyJL}, and it has remarkable practical success.

For most applications of polynomial system solving, and for certain questions in theoretical computer science one needs to count and find zeros of polynomial equations over real numbers, e.g. see \cite{anne,koiran}. No general and efficient algorithm that counts real zeros of arbitrary sparse polynomial systems is known, and there are good complexity theoretic reasons to believe that at this level of generality the problem is intractable. Our aim is to locate a sufficiently general and tractable sub-case of real zero finding problem: Consider support sets $A_1,A_2,\ldots,A_n \subseteq \Z^n$ are given, can we find effectively checkable conditions on the coefficients of the equations that guarantee tractable solving over the reals? In other words: Where are the ``easy'' equations located in space of sparse real polynomial systems with $n$ equations and $n$ unknowns?

An important observation from real algebraic geometry suggests a map for ``easy'' polynomial systems: one can count real zeros by pure combinatorics if the polynomial system is at the ``toric limit''.  We informally state  this result (\textit{\struc{Viro's Patchworking Method for Complete Intersections}}) to motivate our discussion; see \cref{subsection:patchworking} for a precise statement. 
\begin{theorem} [Viro's Patchworking Method for Finitely Many Zeros] \label{viro-informal}
Let $A_1,\ldots,A_n \subseteq \Z^n$, let $\omega_i : A_i \rightarrow \R$ be lifting functions,
and consider the following family of equations parametrized by $t \geq 1$: 
\begin{align*} 
    p_i(t,\Vector{x}) \ := \ \sum_{\Vector{\alpha} \in A_i} c_{\Vector{\alpha}}^{(i)} t^{\omega_i(\Vector{\alpha})} \Vector{x}^{\Vector{\alpha}} \;  i=1,2,\ldots,n.
\end{align*}
Let $\varepsilon_i: A_i \rightarrow \{ -1, +1 \}$ be the sign functions defined by signs of the coefficients $c_{\Vector{\alpha}}^{(i)} \in \R$.  Then, for sufficiently large $t \gg 1$, the set of common zeros of $p_1(t,\Vector{x}),p_2(t,\Vector{x}),\ldots,p_n(t,\Vector{x})$ on $\R_{+}^n$ is homeomorphic to 
\begin{align*} \Trop(A_1,\omega_1,\varepsilon_1) \cap \Trop(A_2,\omega_2,\varepsilon_2) \cap \ldots \cap \Trop(A_n, \omega_n,\varepsilon_n) \end{align*}
\noindent where $\Trop(A_i,\omega_i,\varepsilon_i)$ are the positive part of tropical varieties $\Trop(A_i,\omega_i)$ as defined in \cref{subsection:patchworking} .
\end{theorem}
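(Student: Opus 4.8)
The plan is to pass to logarithmic coordinates, localise the problem to small neighbourhoods of the tropical intersection by a dominant-term analysis, and then run a quantitative implicit function theorem against an explicit ``toric limit'' system on each such neighbourhood. Writing $x_j=t^{y_j}$ identifies $\R_+^n$ with $\R^n$ and turns the $i$-th equation into $f_i(t,\Vector{y}):=\sum_{\Vector{\alpha}\in A_i}c^{(i)}_{\Vector{\alpha}}\,t^{\langle\Vector{\alpha},\Vector{y}\rangle+\omega_i(\Vector{\alpha})}=0$, so the exponents are affine-linear in $\Vector{y}$ and, for fixed $\Vector{y}$, the $t\to\infty$ behaviour is governed by the tropical polynomial $\tau_i(\Vector{y}):=\max_{\Vector{\alpha}\in A_i}\bigl(\langle\Vector{\alpha},\Vector{y}\rangle+\omega_i(\Vector{\alpha})\bigr)$ and by the set $S_i(\Vector{y})$ of monomials attaining it. Dividing by $t^{\tau_i(\Vector{y})}$ gives $t^{-\tau_i(\Vector{y})}f_i(t,\Vector{y})\to\sum_{\Vector{\alpha}\in S_i(\Vector{y})}c^{(i)}_{\Vector{\alpha}}$ with error $O(t^{-\delta})$, uniformly on compacta, $\delta>0$ being the gap to the next exponent. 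If $\Vector{y}\notin\Trop(A_i,\omega_i,\varepsilon_i)$ then by definition all monomials in $S_i(\Vector{y})$ carry the same sign $\varepsilon_i$, so the limit is a nonzero real and $f_i(t,\Vector{y})\neq 0$ for $t\gg 1$. A compactness argument on large balls, together with control of the finitely many unbounded cells of the common refinement of the complexes $\Trop(A_i,\omega_i)$ through their recession cones, upgrades this to: the zero set $\widetilde{Z}_t$ of the system in $\Vector{y}$-coordinates lies, for $t\gg 1$, inside any prescribed neighbourhood of $\mathcal{T}:=\bigcap_i\Trop(A_i,\omega_i,\varepsilon_i)$, and in particular stays bounded.

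Under the hypotheses in force (see \cref{subsection:patchworking}) the set $\mathcal{T}$ is finite and the tropical intersection is transverse, so it suffices to prove that near each $\Vector{y}^\star\in\mathcal{T}$ the system has exactly one positive solution for $t\gg 1$; the asserted homeomorphism is then this bijection of finite sets. I would examine a neighbourhood of $\Vector{y}^\star$ at its natural scale by substituting $\Vector{y}=\Vector{y}^\star+\Vector{u}/\log t$ and factoring out $t^{\tau_i(\Vector{y}^\star)}$: the non-maximal monomials contribute $O(t^{-\delta})$ and one obtains $t^{-\tau_i(\Vector{y}^\star)}f_i\bigl(t,\Vector{y}^\star+\Vector{u}/\log t\bigr)\longrightarrow\sum_{\Vector{\alpha}\in S_i(\Vector{y}^\star)}c^{(i)}_{\Vector{\alpha}}\,e^{\langle\Vector{\alpha},\Vector{u}\rangle}=:F^{\Vector{y}^\star}_i(\Vector{u})$ in $C^\infty$ on compact $\Vector{u}$-sets. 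After $w_j=e^{u_j}$ this limit is exactly the toric limit system, supported on the faces $S_i(\Vector{y}^\star)$ that $\Vector{y}^\star$ selects in the regular subdivisions induced by the $\omega_i$. Since the error terms decay like $t^{-\delta}$ while $e^{\langle\Vector{\alpha},\Vector{u}\rangle}$ is only $t^{O(\rho)}$ on a ball of radius $\rho\log t$, one retains uniform control of the perturbation even on such growing balls when $\rho$ is small, which is what rules out spurious zeros away from the solution of the limit system.

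The crux is to show that the toric limit system has exactly one nondegenerate solution in $\R_+^n$: a Newton--Kantorovich step then promotes it to a unique genuine solution of $f_1(t,\cdot)=\dots=f_n(t,\cdot)=0$ near $\Vector{y}^\star$ for all large $t$, and the confinement from the first paragraph guarantees that these, collected over all $\Vector{y}^\star\in\mathcal{T}$, exhaust $\widetilde{Z}_t$. This is exactly where the sign functions $\varepsilon_i$, the concavity/regularity hypotheses on the lifts $\omega_i$, and the hypothesis that the coefficients avoid the $\Vector{A}$-discriminant amoeba enter: regularity makes each $S_i(\Vector{y}^\star)$ a single cell of the subdivision induced by $\omega_i$; the sign-cancellation condition defining $\Trop(A_i,\omega_i,\varepsilon_i)$ forces the reduced equation to change sign, hence to have a positive zero; and transversality of the tropical intersection makes the leading exponent differences of the reduced system linearly independent, which forces that positive zero to be unique and the Jacobian there invertible (in the generic case the reduced equations are binomials and this is linear algebra in $\Vector{u}$-coordinates). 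Assembling the local bijections with the global confinement finishes the proof. I expect the main obstacle to be precisely this toric-limit analysis together with the two-scale uniformity: establishing existence, uniqueness and nondegeneracy of the positive zero of the reduced system when $\Vector{y}^\star$ lies on a higher-codimension cell, so that the reduced system is genuinely multinomial and one needs fewnomial-type bounds rather than linear algebra, and turning the pointwise dominant-term asymptotics into estimates that are uniform on balls of radius $\Theta(\log t)$ and strong enough to run the Newton--Kantorovich argument.
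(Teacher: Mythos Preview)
The paper does not prove this theorem; it is quoted from Sturmfels \cite{berndviro}, and the only hint the paper gives toward a proof is a single sentence pointing to the algebraic moment map construction in \cite[Chapter 11, Section 5C--D]{gkz}: one replaces the standard simplex by the regular triangulation and the usual moment map by the one attached to the toric variety of $A_1+\cdots+A_k$. Your route---logarithmic coordinates, dominant-term asymptotics, rescaling $\Vector{y}=\Vector{y}^\star+\Vector{u}/\log t$, and a Newton--Kantorovich lift of the binomial limit system---is a genuinely different, more analytic argument. It is viable and, in the zero-dimensional setting, arguably more transparent than the moment-map proof; what you lose is the clean topological statement in positive codimension that the moment-map machinery delivers uniformly.

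Two corrections. First, you list ``the hypothesis that the coefficients avoid the $\Vector{A}$-discriminant amoeba'' among the ingredients of the toric-limit step. That is not a hypothesis of \cref{viro-informal}: the theorem is purely asymptotic in $t$, and the discriminant-amoeba criterion only enters later (\cref{amoebamagic}) when the paper makes the ``sufficiently large $t$'' effective. For the present statement you need nothing about discriminants; the rescaled limit system already exists and is nondegenerate for generic $\omega$. Second, your stated ``main obstacle''---the case where $\Vector{y}^\star$ lies on a higher-codimension cell and the reduced system is genuinely multinomial---does not arise. Under the standing genericity assumption (the lifting induces a triangulation of the Cayley configuration; see the remark closing \cref{subsection:polyhedralcayley}), each tropical intersection point is transverse and selects exactly two exponents from every $A_i$, so the limit system is always binomial and the existence/uniqueness/nondegeneracy of its positive zero is the linear-algebra computation of \cref{subsection:binomial}. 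With those two points straightened out, your sketch goes through.
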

\cref{viro-informal} yields a polyhedral object that is homeomorphic to the common zero set of $p_1(t,\Vector{x}),\ldots,p_n(t,\Vector{x})$ on $\R_{+}^n$ for sufficiently large $t$, and it can also be used to handle the set of common zeros on $(\R^{*})^n$. We have three immediate questions:
\begin{enumerate}
 \item How can we quantify precisely when $t$ is ``sufficiently large''? 
 \item Given a polynomial system $p_1(t,\Vector{x}),\ldots,p_n(t,\Vector{x})$ with support sets $A_1,\ldots,A_n$ and coefficients $c_{\Vector{\alpha}}^{i}$ for $\Vector{\alpha} \in A_i$ (as in the theorem statement), can we guarantee that the number of common real zeros does not change as $t$ goes from $1$ to $\infty$?
 \item Can we use the technique in \cref{viro-informal} for polynomial systems that are not necessarily at the ``toric limit''?
\end{enumerate}

The first two questions are interrelated, and they form the main difficulty with respect to developing an algorithmic version of \cref{viro-informal}. These questions were asked since 90's \cite{berndsurvey}; to the best of our knowledge the current paper provides the first progress. We  provide an explicit criterion to answer the second question (stated in \cref{amoebamagic}).  The criterion also furnishes a homotopy algorithm that operates entirely over the reals, which we call \textit{\struc{real polyhedral homotopy algorithm (RPH)}}.  

The third question is due to  Itenberg and Roy; they conjectured Viro's patchworking method provides an upper bound for the number  of real zeros regardless of the polynomial system  being at the toric limit or not \cite{itenberg-roy}. Li and Wang provided a counterexample to the Itenberg-Roy Conjecture \cite{li-wang_descartes}.  

\subsection{Effective Patchworking}
Our development is based on an observation from the book \cite{gkz} by Gelfand, Kapranov, and Zelevinsky (henceforth GKZ) which provides a link between Viro's patchworking method and $A$-discriminants. 
Using the GKZ observation for an algorithm is not straightforward. It requires to locate a query point against the $A$-discriminant variety, and this may be intractable: The defining equation of the discriminant locus is known to be \textit{extremely} complicated; it obstructs the use of computational algebra methods. 
However, it is no obstruction against the use of amoeba theory. Discriminantal amoebas are proven to admit a certain parametric description, and it is easy to compute normal directions on their boundary; see \cref{subsection:Horn-Kapranov}. We exploit these special differential geometric properties of $A$-discriminant amoebas to develop an effective criterion for checking whether a given polynomial system is ``easy''. 
Note that RPH relies on notions from discrete and tropical geometry, and on further notions from GKZ. Furthermore, we use an algorithm called \textit{\struc{Tropical Homotopy}} due to Jensen \cite{jensentropical}. So, before reading the main statement in \cref{amoebamagic} we encourage the reader to check familiarity with the content of \cref{subsection:polyhedralcayley}, \cref{subsection:patchworking}, \cref{subsection:anders}, \cref{subsection:mixedcell}, and \cref{subsection:Horn-Kapranov}.  
\subsection{Complexity aspects}
Our work is inspired by the \textit{practical} efficiency of complex polyhedral homotopy algorithm. Complexity aspects of polyhedral homotopy have been elusive for more than two decades; early papers did not include any complexity analysis, later different authors approached the issue \cite{rojas-m,m1,m2}, certain technical obstacles still remain; see \cref{section:numericalcomplexity}.

A complete complexity analysis of RPH will only become possible when the scientific community fully understands the complexity of numerical path tracking for sparse polynomial systems. We present our thoughts on the complexity of discrete computations, and touch upon the complexity of the numerical part of RPH in  \cref{section:complexity}. 

We point out here that the main parameters  governing the complexity of RPH are different than its complex cousin: the overall complexity of RPH is controlled by the \textit{number of mixed cells} (a combinatorial quantity), the complexity of complex polyhedral homotopy is, in contrast, controlled by the \textit{mixed volume} (a geometric invariant). 
\subsection{Connections to fewnomial theory} 

A system of polynomials  $\Vector{p}=(p_1,p_2,\ldots,p_n)$ is called a \textit{\struc{patchworked polynomial system}} if the real zero set of $\Vector{p}$ is homeomorphic to a simplicial complex created by Viro's combinatorial patchworking technique. For instance, every polynomial system that passes our test in \cref{amoebamagic} is a patchworked system. In \cref{section:fewnomial} we observe the following result that is reminiscent to a conjecture from fewnomial theory \cite{khovanskiifewnomials} attributed to Kushnirenko \cite{kushnirenko}.

\begin{theorem} \label{few}
Let $\Vector{p}=(p_1,p_2,\ldots,p_n)$ be a  patchworked polynomial system where every polynomial $p_i$ has at most $t$ terms. Then $\Vector{p}$ can have at most $2^{n+1} \binom{n(t-1)}{n}$ many common zeros on $(\R^{*})^n$. 
\end{theorem}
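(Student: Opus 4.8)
The plan is to convert the count into the combinatorial count furnished by Viro's patchworking, and then to bound that combinatorial count by the number of chambers of a small real hyperplane arrangement.

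First I would dispose of the degenerate case: if the real zero set of $\Vector{p}$ on $(\R^{*})^n$ is positive-dimensional it is infinite, so assume it is finite (this is the only case in which the statement has content). Since $\Vector{p}$ is patchworked, its zero set on $(\R^{*})^n$ is then homeomorphic to a zero-dimensional simplicial complex created by Viro's patchworking, and the number of common real zeros equals the number of vertices of that complex. By the combinatorial description recalled in \cref{subsection:patchworking}, each such vertex lies in one of the $2^n$ sign orthants $\sigma \in \{-1,+1\}^n$ and is a transverse intersection point of the $n$ positive tropical hypersurfaces $\Trop(A_i,\omega_i,\varepsilon_i)$ inside that orthant; equivalently it is a solution of $\Vector{p}$ in the open orthant $\sigma$. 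Hence it suffices to bound, for each fixed orthant, the number of solutions of $\Vector{p}$ there by $e^n(t-1)^n$, and then sum over the at most $2^n$ orthants to obtain $2^n e^n (t-1)^n \le 2^{n+1} e^n (t-1)^n$.

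Fix an orthant; absorbing the sign pattern into the coefficients we may take it to be $\R_{>0}^n$, where in logarithmic coordinates $\Vector{p}$ becomes a system of $n$ exponential sums $f_i(\Vector{y}) = \sum_{\Vector{\alpha} \in A_i} c_{\Vector{\alpha}}^{(i)} e^{\langle \Vector{\alpha}, \Vector{y}\rangle}$, each with at most $t$ terms. Here is the heart of the argument, and the step I expect to be the main obstacle: \emph{because the system is patchworked}, the number of nondegenerate real solutions should be at most the number of chambers of an affine hyperplane arrangement in $\R^n$ consisting of at most $n(t-1)$ hyperplanes. The reason one can hope for this is that being patchworked forces the real zero set to be (isotopic to) the piecewise-linear object cut out by the tropical hypersurfaces, so the Khovanskii--Rolle bookkeeping one would run on $f_1,\dots,f_n$ --- repeatedly replacing one equation by a Jacobian-type equation and controlling, via Rolle's theorem and the elementary fact that an exponential sum with $t$ terms has at most $t-1$ real zeros on any log-line, how many solutions can survive --- never encounters genuinely curved auxiliary hypersurfaces, which are exactly what forces the $2^{\binom{k}{2}}$-type blow-up in the general Khovanskii and Bihan--Sottile fewnomial bounds. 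In the patchworked case the auxiliary loci are (isotopic to) affine hyperplanes, at most $t-1$ of them per equation, and all $n$ equations together live on an arrangement of at most $n(t-1)$ hyperplanes. I would make this precise either by pushing Rolle divisors through the patchworking family of \cref{viro-informal}, or directly from the Horn--Kapranov/Gale-dual parametrization of \cref{subsection:Horn-Kapranov}, which linearizes precisely the data that the patchworked hypothesis keeps under control.

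Granting this, I would close with the classical estimate that an arrangement of $m$ hyperplanes in $\R^n$ has at most $\sum_{k=0}^{n}\binom{m}{k} \le \bigl(\tfrac{em}{n}\bigr)^{n}$ chambers; taking $m = n(t-1)$ gives the per-orthant bound $e^n(t-1)^n$, and summing over the at most $2^n$ sign orthants yields $2^{n+1} e^n (t-1)^n$, with a factor of $2$ to spare. The reduction to Viro-complex vertices, the use of Descartes' rule along log-lines, and the arrangement count are all routine; the genuine content, and the hard part, is the claim that the patchworked hypothesis is exactly what makes the Khovanskii--Rolle count linear in $n(t-1)$ rather than exponential.
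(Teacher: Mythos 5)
Your overall architecture is sound up to the per-orthant reduction, and you arrive at the same numerology, but the heart of your argument is a claim you do not actually prove, and which is genuinely different from (and much harder than) what the paper does.

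The paper's proof of this statement (its Proposition~5.6, \cref{patch_few}) is a piece of pure polyhedral combinatorics and never touches Khovanskii--Rolle or hyperplane arrangements. It observes that the number of real zeros in a fixed orthant is at most the number of \emph{mixed cells} in the coherent fine mixed subdivision of $A_1+\cdots+A_n$, which in turn equals the number of corresponding simplices in the triangulation of the Cayley configuration $\A = A_1 * \cdots * A_n$, and then simply bounds the total number of simplices in any triangulation of $\A$ by the number of facets of the lifted Cayley polytope $\conv(\A^\omega) \subset \R^{2n}$, a polytope with at most $tn$ vertices. McMullen's Upper Bound Theorem gives at most $2\binom{tn-n}{n}$ facets; Stirling gives $2\binom{tn-n}{n} \le 2 e^n (t-1)^n$; multiplying by $2^n$ orthants gives the stated bound. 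This is short, unconditional, and entirely combinatorial.

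Your proposal instead tries to bound the per-orthant count by the number of chambers of an affine hyperplane arrangement with $n(t-1)$ hyperplanes, invoking a modified Khovanskii--Rolle sweep in which, you claim, \emph{because the system is patchworked}, the auxiliary Jacobian loci stay affine. This is the step you flag as the main obstacle, and it is indeed a genuine gap: ``patchworked'' only says that the real zero set of $\Vector{p}$ is isotopic to the tropical complex; it makes no assertion about the geometry of the intermediate Rolle divisors (the zero sets of the iterated Jacobians $J_k$) that the Khovanskii--Rolle induction manufactures. Those are not part of the patchworked object, and there is no reason offered why the patchworked hypothesis should flatten them into hyperplanes. The alternative route you suggest --- ``pushing Rolle divisors through the patchworking family'' or using the Horn--Kapranov parametrization --- is not developed and does not obviously help: Horn--Kapranov parametrizes the discriminant amoeba in coefficient space, not the auxiliary Jacobian hypersurfaces in the solution space where Rolle's theorem is being applied. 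Unless you can actually prove the hyperplane-arrangement reduction, this proof does not close. I would recommend replacing the entire per-orthant step with the Cayley/McMullen count: once you observe that per-orthant zeros $\le$ mixed cells $\le$ simplices in a triangulation of $\A$ $\le$ facets of a $2n$-dimensional polytope with $tn$ vertices, the bound falls out of the Upper Bound Theorem with no analytic input at all.
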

Note that $2^{n+1} \binom{n(t-1)}{n} \leq 2^{n+1} e^n (t-1)^n$ where the right hand side resembles Kushnirenko's conjecture. To illustrate the difference between the number of paths tracked in RPH and the number of paths in complex polyhedral homotopy we provide a very simple example.
\begin{example}
Let $A=\{ (0,0,0) , (0,0,d) , (0,d,0), (d,0,0) \}$, and let $\Vector{p}=(p_1,p_2,p_3)$ where 
$p_i= a^{(i)}_0 + a^{(i)}_1 x_3^d + a^{(i)}_2 x_2^d + a^{(i)}_3 x_1^d$ are real polynomials with three variables. Further assume that the coefficients of $\Vector{p}$ is generic in the sense of Bernstein's theorem, this implies $\Vector{p}$ has $d^3$ many zeros on $(\C^{*})^3$. If $\Vector{p}$ is a patchworked polynomial system, by Theorem \ref{few} it has at most $1344$ zeros in $(\R^{*})^3$. We should note that $1344$ is very much an over estimation (correct bound in this specific example is at most $8$) where else $d^3$, with $d$ being arbitrarily large, is the exact number of zeros in $(\C^{*})^3$. 
\end{example}

\cref{few} is a direct application of McMullen's upper bound theorem. 
Things become geometrically more interesting when one tries to bound the number of mixed cells for support sets $A_i$ with different cardinalities (mixed supports). In \cite{bihandiscretemixed} it is claimed that a patchworked polynomial system $\Vector{p}=(p_1,p_2,\ldots,p_n)$, where $p_i$ has at most $t_i$ many terms, can have at most $\prod_{i=1}^n (t_i-1)$ many zeros on $\mathbb{R}_{+}^n$. We have learned from Bihan that the proof of this result is not correct, but the result still holds true. Bihan informed us that a new proof and an erratum will appear soon (\cite{Bihan:Corrigendum}).
\subsection{Structure of the paper}
Our aim is to write this paper as self contained as possible. The preliminaries section contains background information and results from discrete geometry, the theory of $A$-discriminants, symbolic computation, and numerical path trackers. Jensen's tropical homotopy algorithm and mixed cell cones are also introduced in this section. In the third section, we transform asymptotic and qualitative results from \cite{gkz} to a more quantitative and checkable condition. In the fourth section we present our real polyhedral homotopy algorithm and an example. The fifth section is concerned with the complexity aspects. The last section contains a discussion of questions, that were brought to our attention after the initial version of this paper appeared on ArXiv.  
\subsection*{Acknowledgments}
We cordially thank Sascha Timme for implementing a preliminary version of the algorithm developed in this article in the software \textsc{HomotopyContinuation.jl}, and for his help with developing \cref{Exa:OurAlgorithmInPractice}. We thank Mat\'ias Bender, Paul Breiding, Felipe Cucker, Mario Kummer, Gregorio Malajovich, Jeff Sommars, and Josue Tonelli-Cueto for useful discussions. The first author thanks J. Maurice Rojas for introducing him to the beautiful book \cite{gkz}. First author is supported by NSF CCF 2110075, and the second author is supported by the DFG grant WO 2206/1-1.

\section{Preliminaries}
\label{section:preliminaries}

We denote $\struc{[n]} := \{1,\ldots,n\}$, $\struc{\C^*} := \C \setminus \{0\}$, and $\struc{\R^*} := \R \setminus \{0\}$. Let $\struc{\Vector{e}_j}$ denote the $j$-th coordinate vector in $\R^{n}$. To avoid redundancies later in the articles we set $\struc{\Vector{e}_0} := \Vector{0}$.

For a given convex set $C$, we denote its boundary by $\struc{\partial C}$. For a convex cone $K \in \R^{n}$, the \struc{dual cone $K^{\circ}$} is defined as
\[ K^{\circ}:= \{ y \in \R^n : \langle x , y \rangle \geq 0 \; \text{for all} \; x \in K \}\]
For a given polytope $P$, we denote its \struc{\textit{vertex set}} as $\struc{\vertices{P}}$.
For $\Vector{v} \in \vertices{P}$  the \struc{\textit{normal cone}} is the collection of linear functional that achieves its maxima over $P$ at $\Vector{v}$ and is denoted with $\struc{\NormalCone{\Vector{v}}}$. Entire collection of all normal cones $\NormalCone{\Vector{v}}$ form a fan called  \struc{\textit{normal fan}} and denoted with $\struc{\NormalFan{P}}$. 

In what follows we consider finite sets $\struc{A} := \{\Vector{a}_1,\ldots,\Vector{a}_m\} \subset \Z^n$ and $\struc{A_1,A_2,\ldots,A_k} \subset \Z^n$, which are \textit{\struc{support sets}} of polynomials. We denote the \textit{\struc{Minkowski sum}} of the $A_i$ as $\struc{\Minkowski{A}{1}{k}}$. Note that
\begin{align*}
	\conv\left(\Minkowski{A}{1}{k}\right) \ = \ \sum_{i = 1}^k \conv(A_i).
\end{align*}

For a polynomial $p \in \C[\Vector{x}]$ with support $A$, the \textit{\struc{Newton polytope}} is given by $\struc{\New(p)} := \conv(A)$. We denote the \textit{\struc{variety}}, i.e. the common solutions of a system of polynomials $\p$ on complex numbers as $\struc{\variety{\p}}$, the \textit{\struc{real locus}} as $\struc{\varietyreal{\p}} := \variety{\p} \cap \R^n$, and positive / nonzero real locus as $\struc{\varietyrealpositive{\p}}$ and $\struc{\varietyrealnonzero{\p}}$.

\subsection{Polyhedral subdivisions, secondary polytope and Cayley configuration} \label{subsection:polyhedralcayley}
In this section we introduce polyhedral subdivisions, secondary polytopes and Cayley configurations; for further details we refer the reader to \cite{triangulations}.

Let $A \subset \Z^n$ be a set of lattice points and let $\struc{\omega}: A \rightarrow \R$ be a function. The \struc{\textit{lifting}} of $A$ induced by $\omega$ is defined as: 
\begin{align*} 
	\struc{A^{\omega}} \ := \ \left\{ (\x, \omega(\x)) : \x \in A  \right\}.
\end{align*} 
We call a face $F$ of $\conv(A^{\omega})$ an \struc{\textit{upper face}} if it is given by
\begin{align*}
	F \ = \ \{ \x \in \conv(A^{\omega}) : \langle \Vector{c} ,\x \rangle \geq \langle \Vector{c}, \Vector{y} \rangle \text{ for all } \Vector{y} \in \conv(A^{\omega}) \}
\end{align*}
where $\Vector{c}$ is a vector with a positive last entry. Intuitively, upper faces are the faces that are ``visible'' from $(0,\ldots,0,\infty)$. We project upper faces of $\conv(A^{\omega})$ on the point set $A$:  
\begin{align*} 
    \struc{\Delta_{\omega}} \ := \ \left\{ \x \in A : (\x,\omega(\x)) \text{ belongs to an upper face of } \conv(A^{\omega})  \right\}. 
\end{align*}
$\Delta_{\omega}$ is a polyhedral subdivision of $A$. Polyhedral subdivisions obtained this way are called \struc{\textit{coherent}} or \struc{\textit{regular}}. Note that $\Delta_{\omega}$ is a triangulation (i.e. a subdivision using only simplices) unless the lifted points $A^{\omega}$ have certain affine dependencies \cite[Remark 5.2.3]{triangulations}.

Now we define the \textit{secondary polytope} of $A$, which encodes all coherent triangulations of $A$, and discuss its key properties; see \cite[Section 5]{triangulations}.
\begin{definition}
	\label{definition:secondarypolytope}
	Let $T$ be a triangulation of $A =\{ \Vector{a}_1, \Vector{a}_2, \ldots, \Vector{a}_m \}$, and let $\sigma_1,\ldots,\sigma_s$ be the simplices in $T$. We define
    \begin{align*} 
        \struc{\Phi_A(T)} \ := \ \sum_{j=1}^m \left(\sum_{\{\sigma \in T \, : \, \Vector{a}_j \in \sigma\}} \vol(\sigma) \right) \Vector{e}_j.
    \end{align*} 
    We define the \struc{\textit{secondary polytope}} of $A$ as:
    \begin{align*} 
        \struc{\secondpoly{A}} \ := \ \conv \left\{ \Phi_A(T) \ : \ T \text{ is a triangulation of } A \right\}.
    \end{align*}
    The corresponding normal fan $\NormalFan{\secondpoly{A}}$ is called the \textit{\struc{secondary fan}}. For its cones, the \textit{\struc{secondary cones}}, we use the abbreviated notation $\struc{\NormalCone{T}} := \NormalCone{\Phi_A(T)}$.
\end{definition}

\begin{theorem} \cite[Section 5]{triangulations}. \label{theorem:secondarypolytope}
The secondary polytope has the following properties:
\begin{enumerate}
    \item The vertices of $\secondpoly{A}$ are in one to one correspondence to the coherent triangulations of $A$. 
    \item The face lattice of $\secondpoly{A}$ is isomorphic to a refinement poset of the coherent polyhedral subdivisions of $A$.
    \item A lifting function $\omega: A \rightarrow \R$ induces the triangulation $T$ if and only if $\omega \in \interior{\NormalCone{T}}$.
    \item Consider the support set $A$ as a $n \times m$ integer matrix. Then every secondary cone $\NormalCone{T}$ includes the $n+1$ dimensional linear space spanned by rows of $A$ and all ones vector $(1,1,\ldots,1)$.  As a consequence, the secondary polytope $\secondpoly{A}$ is $m-n-1$ dimensional.
\end{enumerate}
\end{theorem}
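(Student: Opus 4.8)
The plan is to derive all four statements from one identity that turns a lifting function, viewed as a linear functional on $\R^{m}$, into a volume integral. Write $A$ as the matrix with columns $\Vector{a}_1,\dots,\Vector{a}_m$ and identify a lifting $\omega\colon A\to\R$ with the vector $(\omega(\Vector{a}_1),\dots,\omega(\Vector{a}_m))\in\R^{m}$. For a triangulation $T$ of $A$ let $g_{T,\omega}\colon\conv(A)\to\R$ be the continuous function that on each simplex $\sigma\in T$ equals the affine interpolation of the values of $\omega$ at the vertices of $\sigma$. Because the integral of an affine function over a simplex equals the volume of the simplex times the average of its vertex values, a direct computation from \cref{definition:secondarypolytope} gives
\begin{align*}
	\langle \omega, \Phi_A(T) \rangle \ = \ (n+1)\int_{\conv(A)} g_{T,\omega}\,.
\end{align*}
This identity is the engine of the whole proof.

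First I would prove (3). Let $h\colon\conv(A)\to\R$, $h(\Vector{x}):=\max\{\lambda:(\Vector{x},\lambda)\in\conv(A^{\omega})\}$, be the upper-hull function; it is concave and its graph is exactly the union of the upper faces of $\conv(A^{\omega})$. For any triangulation $T$ and any point $\Vector{x}\in\sigma\in T$, concavity of $h$ together with $h(\Vector{a}_j)\ge\omega(\Vector{a}_j)$ yields $g_{T,\omega}(\Vector{x})\le h(\Vector{x})$, and equality holds for every $\Vector{x}$ exactly when each simplex of $T$ is the projection of an upper face, i.e. when $T=\Delta_{\omega}$ (which in particular forces $\Delta_{\omega}$ to be a triangulation). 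Plugging this into the identity shows that $\Phi_A(T)$ maximizes the functional $\omega$ over $\secondpoly{A}$ if and only if $T=\Delta_{\omega}$; since normals point outward, this is precisely the assertion $\omega\in\interior{\NormalCone{T}}\Leftrightarrow\Delta_{\omega}=T$. Statement (1) is then formal: every vertex of $\secondpoly{A}$ is the unique maximizer of some generic $\omega$, hence equals $\Phi_A(\Delta_{\omega})$ for a coherent triangulation; conversely each coherent triangulation $T=\Delta_{\omega}$ produces the vertex $\Phi_A(T)$ by (3); and two coherent triangulations with the same image under $\Phi_A$ would have full-dimensional secondary cones sharing an interior point, hence coincide.

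For (2) I would run the same argument for arbitrary, possibly non-generic, $\omega$: the face of $\secondpoly{A}$ exposed by $\omega$ is $\conv\{\Phi_A(T):T\text{ refines }\Delta_{\omega}\}$, because any such $T$ still satisfies $g_{T,\omega}=h$ on $\conv(A)$. This gives a bijection between faces of $\secondpoly{A}$ and coherent subdivisions of $A$ under which a finer subdivision corresponds to a smaller face, i.e. the face lattice is isomorphic to the refinement poset of coherent subdivisions. For (4), note that replacing $\omega$ by $\omega+(v^{T}A+c\,\mathbf{1})$ for $v\in\R^{n}$, $c\in\R$ — that is, adding the restriction to $A$ of an affine function on $\R^{n}$ — is an affine shear of the lifted configuration $A^{\omega}$ and therefore does not change which faces of $\conv(A^{\omega})$ are upper, hence does not change $\Delta_{\omega}$. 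Thus every secondary cone is invariant under translation by the subspace $L$ spanned by the rows of $A$ and $\mathbf{1}$; when $\conv(A)$ is $n$-dimensional these $n+1$ vectors are linearly independent (an affine functional vanishing on $A$ must vanish identically), so $L$ is a common $(n+1)$-dimensional lineality space of all secondary cones, and dually $\secondpoly{A}$ spans an affine subspace of dimension $m-(n+1)=m-n-1$, in which it is full-dimensional since it has the vertices from (1).

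I expect the main obstacle to be the extremization step in (3): one must verify carefully that $g_{\Delta_{\omega},\omega}$ is the pointwise maximum of $g_{T,\omega}$ over all triangulations $T$, that equality forces $T=\Delta_{\omega}$, and that this statement translates correctly into one about interiors of secondary cones under the outward-normal convention. The bookkeeping for the non-simplicial case in (2) — in particular pinning down the direction of the order isomorphism — and the linear-independence argument in (4) are routine by comparison. All of this is classical and can also be extracted from \cite[Section 5]{triangulations} and \cite{gkz}.
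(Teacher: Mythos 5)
The paper does not prove this theorem itself; it cites \cite[Section~5]{triangulations}, and your sketch correctly reconstructs the standard argument from that reference (and from GKZ): the integral identity $\langle \omega, \Phi_A(T)\rangle = (n+1)\int_{\conv(A)} g_{T,\omega}$, the pointwise comparison $g_{T,\omega}\le h$ with $h$ the concave upper-hull function, and the affine-invariance observation for the lineality space. One small correction in your step for (3): $g_{T,\omega}=h$ identically is equivalent to $T$ \emph{refining} $\Delta_\omega$ (each simplex of $T$ may sit strictly inside a cell of $\Delta_\omega$), so the correct intermediate statement is that $\Phi_A(T)$ maximizes $\omega$ iff $T$ refines $\Delta_\omega$, and is the \emph{unique} maximizer iff $\Delta_\omega$ is already a triangulation equal to $T$; you effectively use the latter when passing to $\interior{\NormalCone{T}}$, but the clause ``maximizes\ldots if and only if $T=\Delta_\omega$'' as literally written is slightly too strong.
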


For later use, we need to have a better understanding of the description of secondary cones $\NormalCone{T}$. We first define a circuit.
\begin{definition} \label{circuit}
An affine dependence among lattice points of a set $A \subseteq \mathbb{Z}^n$ is the relation given by  $\sum_{\alpha \in A} a_{\alpha} \alpha =0$ where $\sum_{\alpha \in A} a_{\alpha}=0$. A circuit $Z$ is a collection of affinely dependent lattice points where every proper subset of $Z$ is affinely independent. Consequenlty, circuit represents a unique (up to scaling) affine relation $\sum_{\alpha \in Z} \lambda_{\alpha} \alpha =0$ and $\sum_{\alpha \in Z} \lambda_{\alpha} = 0$
\end{definition}
A rhombus in the plane is a nice example of a circuit. The following is a basic fact about circuits, see e.g. Lemma 2.4.2 \cite{triangulations}.
\begin{lemma}
Let $Z$ be a circuit, then $Z$ can be decomposed into a disjoint union of two sets $Z = Z_{+} \cup Z_{-}$ with the following property: 
\[ \mathcal{Z}_{+} = \{ Z - \{ \alpha \} : \alpha \in Z_{+} \} \; \; , \; \; \mathcal{Z}_{-} = \{ Z - \{ \alpha \} : \alpha \in Z_{-} \}\]
are the two triangulations of $Z$. 
\end{lemma}
The volume of the simplex  $Z-\{ \alpha \} $ is equal to absolute value of a determinant. Let us denote this determinant with $\sigma_{\alpha}$. A standard argument, see e.g. Remark 4.1.8 in \cite{triangulations}, shows that $\sigma_{\alpha}$'s determine the unique affine relation supported by $Z$. That is, using the terminology in \cref{circuit}, we have $\sigma_{\alpha}=\lambda_{\alpha}$ (up to swapping $Z_{-}$ and $Z_{+}$). 

Now consider a regular triangulation $T$ and suppose $\omega$ is a lifting function that induces $T$. Then, for a simplex $conv \{ a_1,a_2,\ldots,a_{n+1} \}$ in $T$, and $a_{n+2} \in A$ with $a_{n+2 } \notin conv \{ a_1,a_2,\ldots,a_{n+1} \}$, we must have that $(a_{n+2},\omega(a_{n+2}))$ lies `above' the affine spane of $(a_1,\omega(a_1)), \ldots , (a_{n+1}, \omega(a_{n+1})) )$. Suppose $\sum_{i=1}^{n+2} \lambda_i a_i = 0$ is the unique affine relation of the circuit $\{ a_1,a_2,\ldots,a_{n+2} \}$, if we have $\sum_{i=1}^{n+2} \lambda_i \omega(a_i) =0$ then we know that $(a_{n+2},\omega(a_{n+2}))$ is in the affine span of the vectors $\{ (a_1, \omega(a_1)), \ldots,(a_{n+1}, \omega(a_{n+1})\}$. To have $(a_{n+2},\omega(a_{n+2}))$ `above' simply corresponds to $\sum \lambda_i \omega(a_i) >0$. In conclusion, the secondary cone $\NormalCone{T}$ is described by inequalities supported on circuits, and these inequalities are of the form $\sum \lambda_i \omega(a_i) >0$ where $\lambda_{\alpha}$ are signed volumes of simplices in the triangulation of the circuit (up to scaling).

Now we consider polyhedral subdivision of a set $A$ where  $A = \Minkowski{A}{1}{k}$. Let $\struc{F}$ be a cell in coherent polyhedral subdivision of $\Minkowski{A}{1}{k}$ introduced by a lifting function $\omega$.  Then $F$ corresponds to  a face in $\sum_{i = 1}^n \conv(A_i)^{\omega}$. Let $F = \Minkowski{F}{i}{k}$ where $\struc{F_i}$ are the corresponding faces on $\conv(A_i)^{\omega}$. 

\begin{definition}
\label{definition:finemixed}
	A coherent polyhedral subdivision $\struc{\Delta_{\omega}}$ of $A_1+A_2+\ldots+A_k$ for $A_i \subset \Z^n$ is called \textit{\struc{fine mixed}} if it satisfies the following conditions:
    \begin{enumerate}
    \item For all cells $F$ in the subdivision, we have $\sum_{i=1}^k \dim(F_i)=n$, and  
    \item for all cells $F$ in the subdivision we have $\sum_{i=1}^k (\# F_i - 1)=n$,
    \end{enumerate}
    where $\struc{\# F_i}$ denotes the number of vertices of $F_i$.
\end{definition}

We also need to define Cayley configuration of point sets $A_1,A_2,\ldots,A_k$ and the corresponding Cayley polytope.
\begin{definition}
    \label{Definition:Cayley}
	We define the \struc{\textit{Cayley configuration}} of   $A_1,A_2,\ldots,A_k$ as
	\begin{align*} 
        \struc{\A} \ = \ \struc{A_1 * A_2 * \cdots * A_k} \ := \ \{ (\Vector{x},\Vector{e}_{i-1}) : \Vector{x} \in A_i \} \subseteq \R^{n+k-1},
    \end{align*}
    The \textit{\struc{Cayley polytope}} is defined as $\conv(\A)$, denoted by $\struc{\Cayley(\A)}$.
  \end{definition}
    
The following observation is implicit in most papers in literature: A natural slicing of the Cayley polytope $\Cayley(\A)$ is equivalent to $\sum_{i = 1}^n \conv(A_i)$. More precisely, consider the following set defined by the intersection of $\Cayley(\A)$ with several hyperplanes:    
    \begin{align*} 
     \struc{\tilde{\Cayley(\A)}} \ := \ \left\{ \Vector{x} \in \Cayley(\A) \ : \ x_{n+1}=x_{n+2}=\ldots=x_{n+k-1}=\frac{1}{k} \right\}.
    \end{align*}
    
Observe that a $k$-scaling of $\tilde{\Cayley(\A)}$, i.e., $k \cdot \tilde{\Cayley(\A)}$, is equal to $\sum_{i = 1}^n \conv(A_i)$. For a detailed explanation and a picture-proof see  \cite{hubercayley}.

Suppose that $T$ is a coherent triangulation of the Cayley configuration $\A$.  First, note that $T \cap \tilde{\Cayley(\A)}$ creates a polyhedral subdivision of $\tilde{\Cayley(\A)}$. Via the equivalence, this gives a polyhedral subdivision of $\sum_{i = 1}^n \conv(A_i)$. Let $\sigma$ be a simplex in $T$, then $\sigma$ has $n+k$ vertices which split into sets of vertices $\sigma_i$ that are induced by $A_i$. None of the $\sigma_i$ are empty since otherwise $\sigma$ can not be full-dimensional. Then, up to an isomorphism, $F_{\sigma}= \conv(\sigma_1) + \conv(\sigma_2) + \ldots + \conv(\sigma_k)$ yields a cell in the polyhedral subdivision of $\sum_{i = 1}^n \conv(A_i)$, and all such cells yield a fine mixed subdivision of $\sum_{i = 1}^n \conv(A_i)$.  This correspondence gives a bijection between coherent triangulations of the Cayley polytope and coherent fine mixed subdivisions of the Minkowski sum $\sum_{i = 1}^k \conv(A_i)$; see \cite[Theorem 5.1]{berndresultant}. 

In summary, coherent fine mixed subdivisions of $\sum_{i = 1}^k \conv(A_i)$ are encoded by the vertices of the secondary polytope $\Sigma(\A)$ and the corresponding secondary cones. 
\begin{remark}
 At various parts of this article (in our theorem statements and algorithms) we work with triangulations. For a  generic lifting function $\omega$ the induced polyhedral subdivision $\Delta_{\omega}$ is a triangulation.  The proof of \cite[Proposition 2.2.4]{triangulations} suggests an algorithm, albeit an inefficient one, to check whether a given lifting is generic. The question of finding an efficient algorithm to check genericity of a lifting is an interesting question, but it lies beyond the scope of our paper.
\end{remark}
\subsection{Viro's patchworking method} 
\label{subsection:patchworking}
In this section we introduce Viro's patchworking method for complete intersections.
For further details and relations to Hilbert's 16th problem, we kindly refer the reader to Viro's survey \cite{virosurvey}. For further background information on tropical geometry see e.g., \cite{rau,mikhalkin,Maclagan:Sturmfels}. For implementations of patchworking technique please see \cite{ViroSage} and \cite{joswigviro}.
\begin{definition}
    \label{definition:tropicalvariety}
    Let $A=\{ \Vector{a}_1, \Vector{a}_2, \ldots, \Vector{a}_m \} \subset \Z^{n}$ and $\Delta_{\omega}$ be a coherent triangulation of $A$ given by a lifting function $\omega : A \rightarrow \R$. We define the associated \textit{\struc{tropical variety}} as
    \begin{align*} 
        \struc{\Trop(A,\omega)} := \{ \Vector{x} \in \R^n :  \max_{i} \{ \langle \Vector{x} , a_i \rangle + \omega(a_i) \}  \; \text{is attained at least twice} \}.
    \end{align*}
\end{definition}
Since we are interested in real varieties, we need to distinguish a positive and a negative part of $\Trop(A,\omega)$. Observe that $\Trop(A,\omega)$ together with its complement creates a polyhedral decomposition of $\R^n$.  Also, by definition, every full-dimensional cell in the complement of $\Trop(A,\omega)$ corresponds to a unique $\Vector{a}_j \in A$ as it is given by the set:
\begin{align*}
	\left\{\Vector{x} \in \R^n \ : \ \langle \Vector{x} , \Vector{a}_j \rangle + \omega(a_j) > \langle \Vector{x} , \Vector{a}_i \rangle + \omega(a_i)  \text{ for all } i \in [n] \setminus \{j\}\right\}.
\end{align*}
We define the sign of this cell as $\struc{\varepsilon(\Vector{a}_j)}$.  For every $(n-1)$-dimensional cell in $\Trop(A,\omega)$, there exist two adjacent $n$-dimensional cells with signs assigned by $\varepsilon$.  This motivates the definition of the positive part of a tropical variety. 
\begin{definition}
\label{definition:positiveparttropicalvariety}
The \textit{\struc{positive part}} $\struc{\Trop(A,\omega,\varepsilon)}$ of a given tropical variety $\Trop(A,\omega)$ is the subcomplex consisting of those $(n-1)$-dimensional cells that are adjacent to two $n$-cells with different signs. 
\end{definition}

\begin{theorem} [Viro's Patchworking for Complete Intersections \cite{berndviro}] 
\label{Theorem:BerndViro}
Let $A_1,\ldots,A_k \subset \Z^n$, let  $\omega: A_1 * A_2 * \ldots * A_k \rightarrow \R$ be a lifting function.
Consider a system of polynomials $\struc{\Vector{p}}=(p_1,p_2,\ldots,p_k)$ defined as follows:
\begin{align*} 
    \struc{p_i(t,\x)} \ := \ \sum_{\alpha \in A_i} c_{\alpha} t^{\omega(\alpha)} \x^{\alpha} 
\end{align*}
with $c_{\alpha} \in \R$. Let $\struc{\varepsilon}: A_1 * A_2 * \cdots * A_k \rightarrow \{ -1, +1 \}$ be the sign function defined by coefficients of $\Vector{p}$. 
Then, for sufficiently large $t>>1$, the real algebraic set $\varietyrealpositive{\p}$ is homeomorphic to 
\begin{align*} 
	\Trop(A_1,\omega_1,\varepsilon_1) \cap \Trop(A_2,\omega_2,\varepsilon_2) \cap \ldots \cap \Trop(A_k,\omega_k,\varepsilon_k) 
\end{align*}
where $\omega_i$ and $\varepsilon_i$ are restrictions of $\omega$ and $\varepsilon$ to $A_i$.
\end{theorem}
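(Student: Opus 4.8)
The plan is to pass to logarithmic coordinates on $\R_{+}^n$, identify the dominant monomials of each $p_i$ region by region, treat a single equation first, and then assemble the complete intersection using the Cayley dictionary of \cref{subsection:polyhedralcayley}. Throughout I would assume $\omega$ generic, so that the coherent subdivision it induces on $A_1 * \cdots * A_k$ is a triangulation and the induced mixed subdivision of $\sum_i \conv(A_i)$ is fine mixed (\cref{definition:finemixed}); this is precisely the hypothesis that will force all the local pictures to fit together.

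\emph{One equation.} Substituting $x_j = t^{y_j}$ gives $p_i(t,t^{y}) = \sum_{\alpha \in A_i} c_\alpha\, t^{\langle y,\alpha\rangle + \omega(\alpha)}$. Writing $m_i(y) := \max_{\alpha \in A_i}\{\langle y,\alpha\rangle + \omega(\alpha)\}$ and $S_i(y)$ for the set of maximizers, division by $t^{m_i(y)}$ yields
\[
 t^{-m_i(y)}\, p_i(t,t^{y}) \ = \ \sum_{\alpha \in S_i(y)} c_\alpha \ + \ \sum_{\alpha \notin S_i(y)} c_\alpha\, t^{\langle y,\alpha\rangle + \omega(\alpha) - m_i(y)},
\]
and the second sum tends to $0$, locally uniformly in $y$, as $t \to \infty$. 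Hence for large $t$ there are no zeros near any $y$ with $|S_i(y)| = 1$ (the interiors of the complementary cells), so $\{y : p_i(t,t^{y}) = 0\}$ concentrates on $\Trop(A_i,\omega_i)$. On the relative interior of a facet of $\Trop(A_i,\omega_i)$ exactly two monomials $c_\alpha, c_\beta$ tie; moving $y \mapsto y + s v$ along a normal direction $v$ with $\langle v, \alpha - \beta\rangle \neq 0$ and solving the resulting two-term leading equation $c_\alpha + c_\beta\, t^{-s\langle v,\,\alpha-\beta\rangle} = o(1)$ shows that a zero exists on one side of the facet, at normal distance $O(1/\log t)$, precisely when $\operatorname{sign}(c_\alpha) \neq \operatorname{sign}(c_\beta)$ --- that is, precisely when the facet belongs to $\Trop(A_i,\omega_i,\varepsilon_i)$. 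The implicit function theorem upgrades this to: over the relative interior of every top cell of $\Trop(A_i,\omega_i,\varepsilon_i)$, the set $\{y : p_i(t,t^{y}) = 0\}$ is a graph over that cell, converging to it uniformly on compacta. Around cells of codimension $\geq 2$ only finitely many monomials tie (because $\Delta_{\omega_i}$ is a triangulation), and after rescaling the transverse slice by $\log t$ one recovers exactly the classical local Viro patch, homeomorphic to the corresponding cone of $\Trop(A_i,\omega_i,\varepsilon_i)$. Splicing these local homeomorphisms --- and noting that for $|y|$ large the index $S_i(y)$ is governed by the recession (trivially lifted) data, so the picture stabilizes to the cone at infinity of $\Trop(A_i,\omega_i,\varepsilon_i)$ --- gives a homeomorphism $\{y : p_i(t,t^{y}) = 0\} \cong \Trop(A_i,\omega_i,\varepsilon_i)$ supported within $O(1/\log t)$ of the identity.

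\emph{From hypersurfaces to the intersection, and the main obstacle.} By \cref{subsection:polyhedralcayley} (via \cite[Theorem 5.1]{berndresultant}) the generic lifting $\omega$ on $\A = A_1 * \cdots * A_k$ produces a coherent fine mixed subdivision of $\sum_i \conv(A_i)$; dually this says that the complexes $\Trop(A_i,\omega_i)$ meet in combinatorially transverse position, so that $\bigcap_i \Trop(A_i,\omega_i)$ is a polyhedral complex whose cells are transverse intersections of cells of the factors, with tangent spaces in general position. Over the relative interior of such a cell the $k$ graphs produced above meet transversally, and the vector implicit function theorem gives that $\varietyrealpositive{\Vector{p}}$, in logarithmic coordinates, is a graph over that cell converging to it as $t \to \infty$; near lower-dimensional cells one again uses the elementary Viro model of each factor together with the transversality of the combinatorics to glue. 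Combining all strata, and inheriting the control at infinity from each factor, yields a homeomorphism $\varietyrealpositive{\Vector{p}} \cong \bigcap_{i=1}^k \Trop(A_i,\omega_i,\varepsilon_i)$ for $t$ sufficiently large; the analogous statement over $(\R^{*})^n$ follows by repeating the argument on each orthant, replacing $\varepsilon_i(\alpha)$ by $\varepsilon_i(\alpha)\,\sigma^{\alpha}$ on the orthant with sign vector $\sigma$. The hard part is exactly this gluing: promoting the cell-by-cell implicit-function pictures, each a priori valid only past its own threshold, to a single homeomorphism valid past one common $t_0$, with the local models agreeing on overlaps --- this is where fineness of the mixed subdivision is indispensable --- and making the $O(1/\log t)$ estimates uniform on the unbounded cells so that no branch of $\varietyrealpositive{\Vector{p}}$ can escape to infinity undetected; both are bookkeeping around the dominant-term and recession estimates rather than new ideas.
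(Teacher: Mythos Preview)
The paper does not prove this theorem; it is quoted from \cite{berndviro}, and the only proof indication given is the one-line remark following the simplex example: ``one replaces the simplex with the triangulation, and the moment map with the moment map corresponding to the toric variety defined by $A_1+A_2+\ldots+A_k$ as explained in \cite[Chapter 11, Section 5, Subsections C and D]{gkz}.'' So the route the paper points to is the \emph{moment map} approach: compactify $\R_{+}^n$ via the algebraic moment map associated to $A_1+\cdots+A_k$, and read off the topology of $\varietyrealpositive{\Vector{p}}$ cell by cell over the induced triangulation, exactly as the simplex example does for a single linear form.

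Your approach is genuinely different: you work in logarithmic coordinates, isolate dominant monomials to see the zero set concentrate on the tropical complex, and use the implicit function theorem plus a gluing argument controlled by the fine mixed subdivision. This is the modern tropical proof (in the spirit of Mikhalkin), and your outline is sound; you correctly flag the two real issues --- a single uniform threshold $t_0$ across all strata, and control on unbounded cells --- and you are right that fineness of the mixed subdivision is what forces the local models to be binomial/simplicial and hence tractable. What your route buys is that it is self-contained and stays entirely in real-analytic language, with no toric variety machinery; what the moment-map route buys is that the compactification does the ``uniformity at infinity'' work for you automatically, so the gluing is over a compact polytope and the bookkeeping you identify as the hard part largely disappears. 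Either approach is acceptable here, but since the paper only cites the result, there is no discrepancy to reconcile.
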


\begin{remark}
 For readers who are familiar with non-Archimedian tropical geometry the theorem statement here might look confusing. The only difference is that in non-Archimedian tropical geometry it is customary to use min notation, lower facets, and $t$ tends to zero. In amoeba theory, however, it is customary to use max notation, upper facets, and $t$ tends to $\infty$. We follow the amoeba theory convention.
\end{remark}

\cref{Theorem:BerndViro} generalizes to the set of zeros on the appropriate toric variety by applying the theorem on every one of the $2^n$ orthants separately and then gluing them together; see \cite[Theorem 5]{berndviro}. We illustrate \cref{Theorem:BerndViro} on the most simple example possible.

\begin{example} 
The set $A :=\{ \Vector{e}_0,\Vector{e}_1,\ldots, \Vector{e}_n \}$ represents the support set for linear forms (and hence its convex hull is the standard simplex).  We consider positive solutions of an affine linear form $f = u_0+\sum_{i=1}^n u_i x_i$, i.e., the solutions with $x_i > 0$. 
We use a variant of moment the from symplectic geometry called \textit{\struc{algebraic moment map}}:
\begin{align*} 
	\struc{\mu_A}: \R_{+}^n \rightarrow \conv(A) \qquad \x \ \mapsto \ \frac{ \sum_{i} x_i \Vector{e}_i }{1+ \sum_{i} x_i }.
\end{align*}
This map is a homeomorphism. The image of  $\varietyrealpositive{u_0+\sum_{i=1}^n u_i x_i}$ under $\mu_A$ is given by:
\begin{align*} 
	\mu_A(\varietyrealpositive{f}) \ = \ \left\{ (y_1,y_2,\ldots,y_n) \in \conv(A) :  u_0\left(1-\sum_{i=1}^n y_i\right) + \sum_{i=1}^n u_i y_i = 0 \right\}. 
\end{align*}
Hence, $\mu_A(\varietyrealpositive{f})$ is defined by the linear form $u_0+u_1x_1+\ldots+u_nx_n$ on the simplex $\conv(A)$, and it separates those $\Vector{e}_i$ with $u_i > 0$ from those $\Vector{e}_j$ with $u_j < 0$.
\end{example}

To prove \cref{Theorem:BerndViro} above, one replaces the simplex with the triangulation, and the moment map with the moment map corresponding to the toric variety defined by $A_1+A_2+\ldots+A_k$ as explained in of \cite[Chapter 11, Section 5, Subsections C and D]{gkz}.
We provide another example, first considered by Sturmfels \cite[Page 382]{berndresultant}. 

\begin{example}
	Consider the two polynomials
	\begin{align*}
		& f_t \ = \ x_2^3 - tx_1x_2^2 - t^5x_1^2x_2 + t^{12}x_1^3 - tx_2^2 + t^4 x_1 x_2 - t^9 x_1^2  - t^5 x_2 - t^9 x_1 + t^{12}  \\
		& g_t \ = \ t^8 x_2^2 - t^6 x_1 x_2 + t^6 x_1^2 - t^3 x_2 - t^2 x_1 + 1
	\end{align*}
We consider the lifting function $\omega$ introduced by the exponents of $t$, the sign function $\varepsilon$ introduced by the coefficients of $f_t$ and $g_t$, and compute the corresponding patchworking. We present the outcome in \cref{Figure:ViroPatchworking}. The computation was already carried out by Sturmfels in the original article \cite{berndresultant} in '94. Here, we generate a plot using the \textsc{Viro.sage} package by O'Neill, Kwaakwah, and the second author \cite{ViroSage}.\label{Exa:ViroPatchworkingCompleteIntersection}
\end{example}
\begin{figure}[ht]
	\ifpictures
	\begin{center}
		\includegraphics[width=0.45\linewidth]{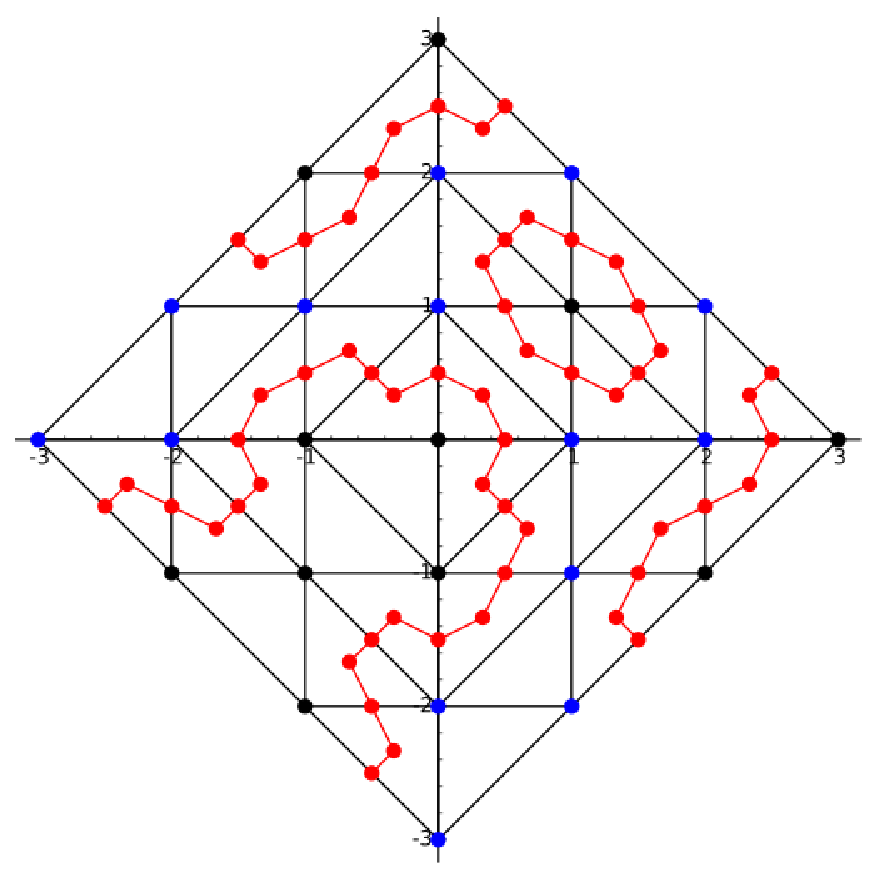} \qquad
		\includegraphics[width=0.45\linewidth]{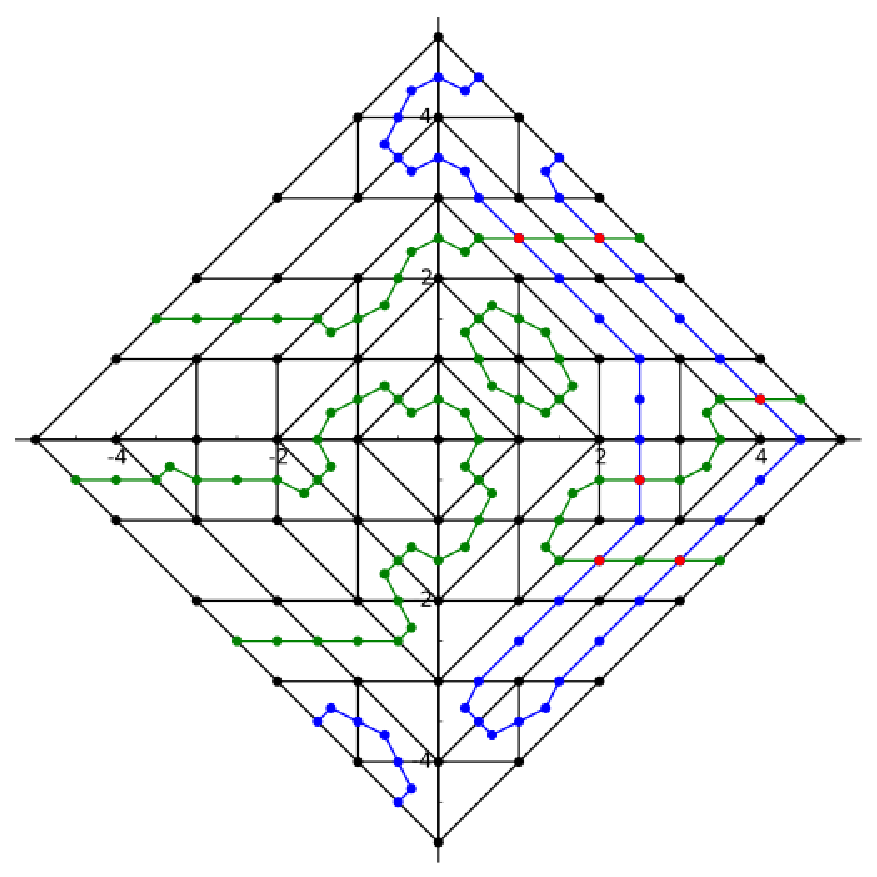}
	\end{center}
    \caption{Viro Patchworking of $f_t$ and the complete intersection of $f_t$ and $g_t$ for $f_t,g_t$ defined as in \cref{Exa:ViroPatchworkingCompleteIntersection}.}
  \label{Figure:ViroPatchworking}
\end{figure}

\subsection{Mixed-cell cones and Jensen's tropical homotopy algorithm}
\label{subsection:anders}
In this article we are concerned with zero dimensional systems, that is we have $n$ support sets $A_1,A_2,\ldots,A_n \subset \Z^n$. For this case, some cells in the regular triangulation $\Delta_\omega$ (introduced by a lifting $\omega$) of  $\A=A_1 * A_2 * \cdots * A_n$  are of particular interest. These cells are called mixed cells. Formally, a cell $\sigma \in \Delta_\omega$ that has $2$ elements from each $A_i$ is called a \struc{\textit{mixed cell}}.  Equivalently, after identification as explained \cref{subsection:polyhedralcayley}, in a fine mixed subdivision of $A_1+A_2+\ldots+ A_n$, mixed cells $\sigma$ are the cells that are given by the Minkowski sum of $n$ edges. 

On the dual side, when we consider the finite set of points in the intersection  
\begin{align*} \Trop(A_1,\omega_1,\varepsilon_1) \cap \Trop(A_2,\omega_2,\varepsilon_2) \cap \ldots \cap \Trop(A_n,\omega_n,\varepsilon_n) \end{align*} 
each of these points correspond to a mixed cell in the triangulation of $\A=A_1 * A_2 * \cdots * A_n$ where every two vertices from each $A_i$ have opposite signs. In the current literature, such a simplex is called an \textit{\struc{alternating mixed cell}} \cite{jensentropical}.

The main observation here is that \cref{Theorem:BerndViro}  depend only on the mixed cells in a triangulation of $\A=A_1*A_2*\ldots*A_n$; we do not need to differentiate between two triangulations that have same collection of mixed cells. We formalize this as follows. 
\begin{definition}[Mixed-Cell Cone of a Triangulation]
Let $T$ be a triangulation of $\A=A_1 * A_2 * \ldots *A_n$, and let $\sigma \in T$ be a mixed cell. For every lifting function $\omega: \A \rightarrow \R$ (represented by a vector in $\R^{\A}$) we denote the induced subdivision as $\Delta_{\omega}$. We define the \struc{\textit{mixed cell cone of} $\sigma$} as:
\begin{align*} 
    \struc{M(\sigma)} \ := \ \{ \omega \in \R^{\A} : \sigma \text{ is a mixed-cell in } \Delta_{\omega} \}.
\end{align*}
Moreover, we define the \struc{\textit{mixed cell cone of} $T$} as:
\begin{align*} 
    \struc{M(T)} \ := \ \bigcap_{\{\sigma \ : \ \sigma \text{ is mixed cell of } T\}} M(\sigma).
\end{align*}
\end{definition}

To clarify the difference between the mixed cell cone and the secondary cone we make a definition and state a lemma.
\begin{definition}
    Let $A_1,A_2,\ldots,A_n \subset \mathbb{Z}^n$ and set $\A = A_1 * A_2 * \ldots * A_n \subset \mathbb{Z}^{2n-1}$. If $\Gamma$ is a facet of $\A$ defined by $\Gamma = \{ x \in \A : x_{n+i} = 0 \}$ for some $1 \leq i \leq n-1$, or $\Gamma = \{ x \in \A : \exists \; i \; \text{such that} \; 1 \leq i \leq n-1  \; \text{and} \; x_{n+i}= 1 \}$ then we call $\Gamma$ an irrelevant facet. If $\Gamma$ is a face included in an irrelevant facet, we call $\Gamma$ an \struc{irrelevant face}. 
\end{definition}

\begin{lemma} 
\label{Lemma:Mixed-cell-cone}
    Let $T=\Delta_{\omega}$ for a lifting function $\omega$, and assume that $\omega \in \NormalCone{\Vector{v}}$ where  $\Vector{v}$ is a vertex of the Newton polytope of the $\A$-discriminant and $\NormalCone{\Vector{v}}$ is its normal cone. Then, we have
    \[ \NormalCone{\Vector{v}}^{\circ}  \subseteq M(T)^{\circ} \subseteq \NormalCone{T}^{\circ}  \]
    Moreover, if $\tau \in \NormalCone{T}^{\circ} -  M(T)^{\circ} $ then $\tau$ is supported on a set that is included in the union of irrelevant faces of $\A$. 
\end{lemma}

\begin{proof}
Inclusion of the cones follow directly from definition, for further structural information we refer to $D$-equivalence notion in Chapter 11, Section 3, subection B of \cite{gkz}.

We prove ``moreover'' part of the claim. Let  $\tau \in \NormalCone{T}^{\circ} -  M(T)^{\circ} $ be an inequality supported on a circuit $Z$. We claim in $Z$ there exist an $i \in [n]$ such that  $\abs{Z \cap A_i}=1$.  Assume otherwise, then we have that for some $j$: $\abs{Z \cap A_i}=2$ for all $i \neq j$, and $\abs{Z \cap A_j}=3$. Then, passing from one triangulation of $Z$ to another involves a mixed cell change which contradicts with the assumption $\tau \in \NormalCone{T}^{\circ} -  M(T)^{\circ} $. Now without loss of generality assume $Z \cap A_1=\Vector{\alpha}$. Then $Z - \Vector{\alpha}$ lies in an irrelevant face of $\A$, and the lattice distance from $\Vector{\alpha}$ to affine hull of $Z - \Vector{\alpha}$ is $1$. One can easily observe that the simplex $\sigma_{\alpha}$ corresponding to $\alpha$ in $Z$ is not full-dimensional and hence has volume zero. Thus, the inequality $\tau$ is supported on $Z-\alpha$. In general, any element of $\NormalCone{T}^{\circ} -  M(T)^{\circ} $ is a conic combination of circuit inequalities $\tau \in \NormalCone{T}^{\circ} -  M(T)^{\circ}$ and we showed that such $\tau$ are supported on irrelevant faces.
\end{proof}
In the rest of this paper we will use the mixed-cell cone $M(T)$ and it will be represented by circuit inequalities generating the cone, that is we work with $M(T)^{\circ}$. Luckily for us, there is already an efficient algorithm for computing $M(T)^{\circ}$: Jensen's \struc{\textit{tropical homotopy algorithm}}, see \cite{jensentropical}, computes for a given (generic) lifting function $\omega$, and point configurations $A_1,A_2,\ldots,A_n$, the triangulation $T=\Delta_{\omega}$ of $\A=A_1*A_2*\ldots*A_n$ and $M(T)^{\circ}$. The idea of Jensen's algorithm is to start from a lifting function $\beta$ yielding only one mixed cell. Then, one keeps track of the changes in the mixed-cell cone  as one changes the lifting function linearly from $\beta$ to a target lifting $\omega$. The algorithm updates the mixed-cell cone with the violated circuit inequalities, and halts whenever it arrives at a triangulation $T$ with $\omega \in M(T)$. The correctness of the algorithm follows from the fact that changes in the regular triangulations always happen by a change between two triangulations of a circuit, and every such change corresponds to one circuit inequality added to the mixed-cell cone.
\subsection{Solving binomial systems over the reals} 
\label{subsection:binomials}
Since we repeat the Viro construction in every orthant of $(\R^{*})^n$, the sign vector $\varepsilon$ changes. However, the lifting function $\omega$ and the corresponding triangulation remains the same for all orthants.  So, in order to count the number of real zeros with Viro's method, one needs to investigate the mixed cells and check how many times a mixed cell becomes an alternating one. Algorithmically, instead of going through Viro's construction $2^n$ times, it is more convenient to use  \textit{\struc{binomial systems}}, i.e., systems of polynomials where every polynomial has only two terms. Every mixed cell corresponds to a binomial system,  and solving that binomial system on $(\R^{*})^n$ corresponds to counting how many times the mixed cell becomes and alternating mixed cell. This approach is much more effective. 

Now we outline how to solve binomial systems over the reals. Consider the following system of binomials:
\begin{align*}
c_{11}\x^{\Vector{a}_{11}} \ = \ c_{12} \x^{\Vector{a}_{12}}, \quad c_{21}\x^{\Vector{a}_{21}} \ = \ c_{22} \x^{a_{22}}, \ \ldots, \quad c_{n1}\x^{\Vector{a}_{n1}} \ = \ c_{n2} \x^{\Vector{a}_{n2}}     
\end{align*}
where $c_{ij} \in \R^{*}$ and $\Vector{a}_{ij} \in \Z^n$. This system is equivalent to the following system of equations:
\begin{align} \label{Equation:2}
 \x^{\Vector{a}_{11}-a_{12}} \ = \ \frac{c_{12}}{c_{11}}, \quad \x^{\Vector{a}_{21}-a_{22}} \ = \ \frac{c_{22}}{c_{21}}, \ \ldots, \quad \x^{\Vector{a}_{n1}-a_{n2}} \ = \ \frac{c_{n2}}{c_{n1}}
\end{align}
Set $\Vector{d}_i=\Vector{a}_{i1}-\Vector{a}_{i2}$, and $D=[\Vector{d}_1 \Vector{d}_2 \ldots \Vector{d}_n]$. To solve the system \cref{Equation:2} over $(\R^{*})^n$, it suffices to perform the elementary integer operations  that reduce $D$ into its Hermite normal form. This operations can be done in strong polynomial time \cite{kannan}. The result is a system of equations 
in the following format:
\begin{align} \label{Equation:3}
	\x_1^{\Vector{h}_{11}} \ = \ \lambda_1 , \quad \x_1^{\Vector{h}_{21}} \x_2^{\Vector{h}_{22}} \ = \ \lambda_2 , \ \ldots , \quad \x_1^{\Vector{h}_{n1}} \ldots \x_{n}^{\Vector{h}_{nn}} \ = \ \lambda_n,
\end{align}
where $\Vector{h}_{ij} \in \mathbb{Z}$ and $\lambda_i \in \R^{*}$. The solutions of \cref{Equation:3} are completely determined by the signs of $\lambda_i$ and $\Vector{h}_{ij}$ being even or odd. 
Hence, \cref{Equation:3} either has no solution in $(\R^{*})^n$, or there exist solutions differing only by their signs.

There is also a recent paper focusing on probabilistic analysis of numerical methods for binomial system solving \cite{cmonmaurice}.

\subsection{$A$-discriminants} \label{gkzbasics}
Given a set of lattice points $A=\{ \Vector{a}_1, \Vector{a}_2, \ldots, \Vector{a}_m \} \subset \Z^n$, we define 
\begin{align*}
	\struc{\C^A} \ := \ \left\{\sum_{\Vector{\alpha} \in A} c_{\Vector{\alpha}} \Vector{x}^{\Vector{\alpha}} \in \C[\Vector{x}] \ : \ c_{\Vector{\alpha}} \in \C \text{ for all } \Vector{\alpha} \in A\right\}
\end{align*}
as the \struc{\textit{space of polynomials supported on} $A$}. Note that $\C^A$ is isomorphic to $\C^{m}$ with $m = \# A$. We define $\struc{(\C^*)^A}$ analogously with $c_{\Vector{\alpha}} \in \C^*$. Now we define our protagonist the $A$-discriminant variety:
\begin{align*} 
    \struc{\nabla_A} \ := \ \ovl{ \left\{ f \in (\C^*)^A \  : \ f \text{ has a singularity on } (\C^{*})^n \right\} }.
\end{align*}
$\struc{\nabla_A}$ is a cone over a projective variety, to get a better sense of this we need to introduce projective toric variety corresponding to $A$:  
\[ X_A := \ovl{ \{ [x^{a_1}:x^{a_2}:\ldots:x^{a_m}] : x \in (\C^{*})^n \} } \]
One can observe that $X_A$ is essentially Zariski closure of a torus orbit. A polynomial $f$ supported on $A$ can be considered as a linear form on the toric variety $X_A$. Discriminant variety corresponds to the hyperplanes that intersect the toric variety $X_A$ non-transversally. This means if $f \notin \struc{\nabla_A}$ then the zero set of $f$ has no singularity on $X_A$. This also means $ \struc{\nabla_A}$ is the cone over the projective dual of $X_A$. Thus, except for specific degenerate configurations $A$, $\struc{\nabla_A}$ is an irreducible hypersurface given by a polynomial with integral coefficients; \cite[Chapter 9]{gkz}. We denote the defining equation of $\struc{\nabla_A}$ with $\Delta_A$. We are interested in the real part of the discriminant variety
\begin{align*}
	\struc{\nabla_A(\R)} \ := \ \nabla_A \cap \R[\Vector{x}]
\end{align*}
The hypersurface $\nabla_A(\R)$ partitions the coefficient space $\R^{A}$ into connected components. If two polynomials $f,g \in \R^{A}$ lie in the same connected component of $\R^{A} - \nabla_A(\R)$ and $X_A$ is smooth then the zero sets of $f$ and $g$ on are isotopic on $X_A$ \cite[pg 380]{gkz}.

For the purposes of homotopy continuation we want to have zero sets of $f,g \in \R^{A}$  be isotopic in the torus orbit $X_A^{\circ}$ instead of the compactification $X_A$.  The nice fact is that $X_A$ admits a decomposition into a disjoint union of torus orbits:
\[ X_A = \sqcup_{\Gamma} X_{\Gamma}^{\circ} \]
where $\Gamma$ are faces of the polytope $conv(A)$ and $X_{\Gamma}^{\circ}$ denotes the torus orbit for which the toric variety $X_{\Gamma}$ is the closure. So to have $\varietyrealnonzero{f}$ and $\varietyrealnonzero{g}$ isotopic we will require two conditions: $f$ and $g$ both does not have a zero on $X_{\Gamma}^{\circ}$ for any proper face $\Gamma$ of $conv(A)$, and $f$ and $g$ are isotopic on $X_A$. Since singularities of toric variety $X_A$ are known to be on $X_{\Gamma}^{\circ}$ for co-dimension two or higher faces, once the first condition is guaranteed the second condition boils down to $f$ and $g$ being in the same connected component in $\R^{A} - \nabla_A(\R)$.

At this point, we need to introduce  \struc{\textit{sparse resultant}}. We summarize basic properties in the following proposition-definition. 
\begin{proposition} \cite[Chapter 8]{gkz} \label{resultant}
Let $A_1,A_2,\ldots,A_k \subset \mathbb{Z}^{k-1}$ be a collection of $k$ finite sets. Then there exists a polynomial $R_{A_1,A_2,\ldots,A_k}$ with the following properties:
\begin{itemize}
    \item $R_{A_1,A_2,\ldots,A_k}$ has integral coefficients and it is irreducible.
    \item If $\Vector{f}=(f_1,f_2,\ldots,f_k)$ is a polynomial system with $f_i \in \C^{A_i}$ and $\Vector{f}$ has a zero in $(\C^{*})^{k-1}$, then $R_{A_1,A_2,\ldots,A_k}(\Vector{f})=0$.
\end{itemize}
\end{proposition}
An exposition for sparse resultants with a computational focus can be found in \cite{emiris}. A nice trick referred to as ``Cayley trick" relates $A$-discriminants and sparse resultants.
\begin{lemma} \cite[Chapter 9, Prop 1.7]{gkz} \label{cayleytrick}
Using the notation of \cref{resultant} and letting $\A:=A_1*A_2* \ldots *A_k$, we have
\[ R_{A_1,A_2,\ldots,A_k}(\Vector{f}(x)) = \Delta_{\A}(f_1(x) + \sum_{i=2}^{k} y_i f_i(x) ) \]
where $y_i$ denotes the new variables added in the construction of $\A$.
\end{lemma}

Now we would like to think about singular zeros of a sparse polynomial system. For a tuple of \struc{\textit{coefficient vectors}} $\struc{\CC}=(\CC_1,\CC_2,\ldots,\CC_k)$ with $\CC_i \in \mathbb{C}^{\# A_i}$, let $\p_{\CC}$ be the polynomial system $\struc{\p_{\CC}}=(p_1,p_2,\ldots,p_{k})$ with $\struc{p_i}=\sum_{\Vector{a}_{ij} \in A_i} \CC_{ij} \x^{\Vector{a}_{ij}} $. We define the \textit{\struc{discriminantal locus}} for systems of equations as follows:
\begin{align*} 
    \struc{\nabla_{A_1,A_2,\ldots,A_k}} \ := \  \ovl{  \left\{ (\CC_1,\CC_2,\ldots,\CC_k) \in \C^{A_1} \times \ldots \times \C^{A_k} : \p_{\CC}  \; \text{posses a singularity on} \;  (\C^{*})^n \right\} }.
\end{align*}
The toric variety that is dual to $\struc{\nabla_{A_1,A_2,\ldots,A_k}} $ may not be clear at first sight, but it is isomorphic to $X_{A_1+A_2+\ldots+A_k}$; we refer the reader to \cite[Chapter 8, Proposition 1.4]{gkz} for a nice explanation.

The discriminantal locus corresponding to hypersurfaces supported by the Cayley configuration $\A=A_1 * A_2 * \cdots * A_k$ is then given by
\begin{align*} 
    \struc{\nabla_{\A}} \ := \ \ovl{ \left\{ \CC \in \C^{\A} :  \sum_{a \in \A} c_a x^{a}  \; \text{posses a singularity on} \;  (\C^{*})^n \right\} }.
\end{align*}
If $\A=A_1 * A_2 * \cdots * A_k$ is not degenerate, then $\nabla_{\A}$ is an irreducible hypersurface. Also, by using the definition of singularity with the Jacobian matrix, it immediately follows that
$\nabla_{\A} \subseteq \nabla_{A_1,A_2,\ldots,A_n}  $. The following result of Esterov, proved by a simple perturbation argument, relates  $\nabla_{\A}$ and  $\nabla_{A_1,A_2,\ldots,A_k}$; see of \cite[Lemma 3.36]{esterov}, and note that in Esterov's notation $\nabla_{A_1,A_2,\ldots,A_k} $ is denoted by $ \Sigma_{A_0,A_1,\ldots,A_{\ell}}$.

\begin{theorem}[Esterov]
If $\A=A_1 * A_2 * \cdots * A_k$ is not defect, and $\dim (\conv(A_i))=n$ for $i=1,2,\ldots,k$, then $\nabla_{A_1,A_2,\ldots,A_k}$ is irreducible of codimension one. 
\end{theorem}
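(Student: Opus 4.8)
The plan is to realize $\nabla_{A_1,\ldots,A_k}$ as the Zariski closure of the image of an incidence variety that is visibly irreducible of the correct dimension, and then to pin that dimension down by comparison with $\nabla_{\A}$. Put $N := \#\A = \sum_{i=1}^{k}\#A_i$, so that $\C^{A_1}\times\cdots\times\C^{A_k}\cong\C^{\A}\cong\C^{N}$. The inclusion $\nabla_{\A}\subseteq\nabla_{A_1,\ldots,A_k}$ has already been recorded before the statement: writing the Cayley polynomial as $P(\x,\Vector{y})=p_1(\x)+\sum_{j=1}^{k-1}y_j\,p_{j+1}(\x)$, a singularity $(\x^{*},\Vector{y}^{*})\in(\C^{*})^{n+k-1}$ of $P$ forces $p_1(\x^{*})=\cdots=p_k(\x^{*})=0$ together with $\nabla_{\x}p_1(\x^{*})+\sum_j y_j^{*}\nabla_{\x}p_{j+1}(\x^{*})=0$, i.e.\ $\p_{\CC}$ has a singularity on $(\C^{*})^{n}$ with the Lagrange multiplier $(1,y_1^{*},\ldots,y_{k-1}^{*})$. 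Since $\A$ is not defect, $\overline{\nabla_{\A}}$ is an irreducible hypersurface, hence of dimension $N-1$. Therefore it suffices to prove that $\overline{\nabla_{A_1,\ldots,A_k}}$ is irreducible of dimension at most $N-1$: sandwiched between two irreducible sets of dimension $N-1$, it must then coincide with $\overline{\nabla_{\A}}$, which settles both irreducibility and codimension one.

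Consider the incidence variety
\[ W \ := \ \bigl\{\,(\CC,\x,[\lambda])\ :\ \CC\in\C^{N},\ \x\in(\C^{*})^{n},\ [\lambda]\in\mathbb{P}^{k-1},\ p_i(\x)=0\ \forall i,\ \textstyle\sum_i\lambda_i\nabla_{\x}p_i(\x)=0\,\bigr\}. \]
A point is a singularity of the system $\p_{\CC}$ on $(\C^{*})^{n}$ precisely when it is a common zero of $p_1,\ldots,p_k$ at which the gradient vectors $\nabla_{\x}p_i$ are linearly dependent, so the image of $W$ under the projection to the $\CC$-coordinate is exactly $\nabla_{A_1,\ldots,A_k}$. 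The other projection $\pi\colon W\to(\C^{*})^{n}\times\mathbb{P}^{k-1}$ has, over each $(\x^{*},[\lambda^{*}])$, the linear subspace $\ker\Psi_{\x^{*},\lambda^{*}}\subseteq\C^{N}$ as fibre, where $\Psi_{\x^{*},\lambda^{*}}\colon\C^{N}\to\C^{k}\times\C^{n}$ sends $\CC=(\CC_i)_i$ to $\big((p_i(\x^{*}))_i,\ \sum_i\lambda_i^{*}\nabla_{\x}p_i(\x^{*})\big)$; indeed all defining equations of $W$ are linear in $\CC$, and they are homogeneous in $\lambda$, so they descend to $\mathbb{P}^{k-1}$.

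The key point — and this is where the hypothesis $\dim\conv(A_i)=n$ enters, in the guise of Esterov's ``simple perturbation'' — is that $\Psi_{\x^{*},\lambda^{*}}$ is surjective for every $(\x^{*},[\lambda^{*}])$, so that $\pi$ has constant fibre dimension $N-n-k$. Full-dimensionality of $A_i$ is equivalent to $\{(1,\Vector{\alpha}):\Vector{\alpha}\in A_i\}$ spanning $\C^{n+1}$, hence to the block map $\CC_i\mapsto\big(p_i(\x^{*}),\nabla_{\x}p_i(\x^{*})\big)$ being onto $\C^{n+1}$ for every $\x^{*}\in(\C^{*})^{n}$; choosing an index $i_0$ with $\lambda_{i_0}^{*}\neq 0$, the $i_0$-th block alone already attains the whole gradient component together with the $i_0$-th value coordinate, while each remaining block attains its own value coordinate, so $\Psi_{\x^{*},\lambda^{*}}$ is onto $\C^{k}\times\C^{n}$. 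Consequently $\pi$ exhibits $W$ as the total space of a vector bundle of rank $N-n-k$ over the irreducible variety $(\C^{*})^{n}\times\mathbb{P}^{k-1}$; thus $W$ is irreducible of dimension $(n+k-1)+(N-n-k)=N-1$, and so $\overline{\nabla_{A_1,\ldots,A_k}}$, being the closure of a projection of $W$, is irreducible of dimension at most $N-1$. Together with the first paragraph this finishes the proof. The only real work is the surjectivity of $\Psi_{\x^{*},\lambda^{*}}$ — which is exactly the perturbation statement ``one can always move the coefficients of each $p_i$ to realize any prescribed value and gradient at $\x^{*}$'', and which is indispensably tied to $\dim\conv(A_i)=n$; the remaining ingredients (the inclusion $\nabla_{\A}\subseteq\nabla_{A_1,\ldots,A_k}$, the hypersurface property of $\nabla_{\A}$, and the final dimension comparison) are formal.
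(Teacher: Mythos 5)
Your proof is correct. Be aware that the paper does not prove Esterov's theorem itself---it cites Esterov's Lemma~3.36 and describes the argument only as ``a simple perturbation argument,'' so there is no in-paper proof to compare against line by line. Your incidence-variety argument is a rigorous formalization of exactly that perturbation idea: the surjectivity of $\Psi_{\x^{*},\lambda^{*}}$ encodes the statement that, because $\dim\conv(A_i)=n$, the coefficient block $\CC_i$ can be moved to realize any prescribed value and gradient of $p_i$ at a torus point; the remainder is a standard constant-fibre-dimension count over the irreducible base $(\C^{*})^{n}\times\mathbb{P}^{k-1}$ together with the comparison against the known irreducible hypersurface $\overline{\nabla_{\A}}$. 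One small exposition note: in the surjectivity step for $i\neq i_0$ you should state explicitly that you choose $\CC_i$ realizing $(v_i,\Vector{0})\in\C^{n+1}$ (value $v_i$, gradient zero), so that all gradient contributions come from the $i_0$-th block, which then supplies $w/\lambda_{i_0}^{*}$; as written, ``each remaining block attains its own value coordinate'' leaves the cancellation of the unwanted gradient contributions implicit.
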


Hence, if the assumptions of Esterov's theorem are satisfied, then $\nabla_{\A}$ and  $\nabla_{A_1,A_2,\ldots,A_k}$ coincide. So, in order to control the changes in the topology for systems of equations supported with $A_1,A_2,\ldots,A_k$, we use the hypersurface $\nabla_{\A}(\R)$.
\subsection{Basics of amoeba theory} 
\label{subsection:amoeba}

In this section, we introduce the notion of amoeba following Gelfand, Kapranov, and Zelevinsky \cite{gkz}. For an overview of amoeba theory please see \cite{Mikhalkin:Survey,passare:tsikh}.

\begin{definition}
	\label{Definition:Amoeba}
	We define the \textit{\struc{Log-absolute value map}} as
    \begin{align*} 
        \struc{\Log}: (\C^{*})^{n} \to \R^{n}, \quad (z_1,z_2,\ldots,z_n) \to (\log \abs{z_1}, \log \abs{z_2}, \ldots, \log \abs{z_n}).
    \end{align*}
    For a Laurent-polynomial $f \in \C\left[\Vector{z}^{\pm 1}\right]$ and variety $\variety{f}\subset (\C^*)^n$ we define the \textit{\struc{amoeba}} of $f$ as $\struc{\Amoeba{f}} := \Log|\variety{f}| \subseteq \R^n$.
\end{definition}
\begin{lemma} \label{Lemma:AmoebaComponentsComplementVertices}
Let $f=\sum_{i} c_i \x^{\Vector{a}_i}$ be a polynomial with support  $A=\{\Vector{a}_1, \Vector{a}_2, \ldots, \Vector{a}_m \}$. Let $\Vector{v}$ be a vertex of $\New(f)$. Suppose that $\Vector{b} \in \NormalCone{\Vector{v}}$ with 
\begin{align*} 
    \langle \Vector{b} , \Vector{v}-\Vector{a}_i \rangle  \ > \ \log\left( \frac{m \cdot \abs{c_i}}{\abs{c_{\Vector{v}}}} \right)
\end{align*}
for all $\Vector{a}_i \neq \Vector{v}$. Then, $ \Amoeba{f} \cap \left( \Vector{b}+\NormalCone{\Vector{v}} \right)= \emptyset$. 
\end{lemma}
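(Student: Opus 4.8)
The statement asserts that on the shifted cone $\Vector{b} + \NormalCone{\Vector{v}}$, the term $c_{\Vector{v}}\Vector{x}^{\Vector{v}}$ strictly dominates the sum of all other terms, so $f$ cannot vanish there. The natural approach is to pass to $\Log$-coordinates. Fix a point $\Vector{z} \in (\C^*)^n$ with $\Log|\Vector{z}| = \Vector{w} \in \Vector{b} + \NormalCone{\Vector{v}}$, and compare absolute values of the monomials. We have $|c_{\Vector{v}}\Vector{z}^{\Vector{v}}| = |c_{\Vector{v}}| \, e^{\langle \Vector{w}, \Vector{v}\rangle}$ and likewise $|c_i \Vector{z}^{\Vector{a}_i}| = |c_i| \, e^{\langle \Vector{w}, \Vector{a}_i\rangle}$ for each $\Vector{a}_i \neq \Vector{v}$.

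First I would write $\Vector{w} = \Vector{b} + \Vector{u}$ with $\Vector{u} \in \NormalCone{\Vector{v}}$. Since normal cones here point outward and $\Vector{v}$ is the vertex, $\langle \Vector{u}, \Vector{v} - \Vector{a}_i\rangle \geq 0$ for every $\Vector{a}_i \in A$ (this is exactly the defining property of $\NormalCone{\Vector{v}}$ in the outer-normal convention: $\Vector{v}$ maximizes $\langle \Vector{u}, \cdot\rangle$ over $\New(f)$). Combining with the hypothesis on $\Vector{b}$,
\begin{align*}
\langle \Vector{w}, \Vector{v} - \Vector{a}_i\rangle \ = \ \langle \Vector{b}, \Vector{v} - \Vector{a}_i\rangle + \langle \Vector{u}, \Vector{v} - \Vector{a}_i\rangle \ > \ \log\!\left(\frac{m\,|c_i|}{|c_{\Vector{v}}|}\right)
\end{align*}
for all $\Vector{a}_i \neq \Vector{v}$. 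Exponentiating gives $|c_{\Vector{v}}|\, e^{\langle \Vector{w}, \Vector{v}\rangle} > m\,|c_i|\, e^{\langle \Vector{w}, \Vector{a}_i\rangle}$, i.e. each non-leading monomial has absolute value strictly less than $\frac{1}{m}|c_{\Vector{v}}\Vector{z}^{\Vector{v}}|$.

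Then I would finish with the triangle inequality: there are at most $m-1$ terms other than the $\Vector{v}$-term (in fact $m-1$, but bounding by $m$ is harmless), so
\begin{align*}
\Bigl|\sum_{\Vector{a}_i \neq \Vector{v}} c_i \Vector{z}^{\Vector{a}_i}\Bigr| \ \leq \ \sum_{\Vector{a}_i \neq \Vector{v}} |c_i \Vector{z}^{\Vector{a}_i}| \ < \ (m-1)\cdot \frac{1}{m}\,|c_{\Vector{v}}\Vector{z}^{\Vector{v}}| \ < \ |c_{\Vector{v}}\Vector{z}^{\Vector{v}}|,
\end{align*}
hence $f(\Vector{z}) \neq 0$. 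Since $\Vector{z}$ was an arbitrary point of $(\C^*)^n$ with $\Log$-image in $\Vector{b} + \NormalCone{\Vector{v}}$, no point of $\variety{f}$ maps into that region, which is precisely $\Amoeba{f} \cap (\Vector{b} + \NormalCone{\Vector{v}}) = \emptyset$.

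I do not anticipate a serious obstacle; the only point requiring care is the sign/direction bookkeeping for the normal cone — making sure that the outer-normal convention indeed yields $\langle \Vector{u}, \Vector{v} - \Vector{a}_i\rangle \geq 0$ rather than $\leq 0$ — and the bookkeeping of which monomials contribute to the factor $m$ versus $m-1$. Both are routine once the conventions from the preliminaries are pinned down.
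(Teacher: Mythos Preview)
Your proof is correct and follows essentially the same approach as the paper: factor out the dominant monomial $c_{\Vector{v}}\Vector{x}^{\Vector{v}}$, then use the hypothesis together with the outer-normal-cone inequality to bound the sum of the remaining terms in absolute value by strictly less than $|c_{\Vector{v}}\Vector{x}^{\Vector{v}}|$. The paper in fact only sketches this, writing $f(\x) = c_{\Vector{v}}\x^{\Vector{v}}(1 + g(\x))$ and noting that $|g(\x)| < 1$ suffices, leaving the rest as ``straightforward''; your write-up supplies exactly those details.
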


\begin{figure}[ht]
	\ifpictures
	\begin{center}
		\includegraphics[width=0.45\linewidth]{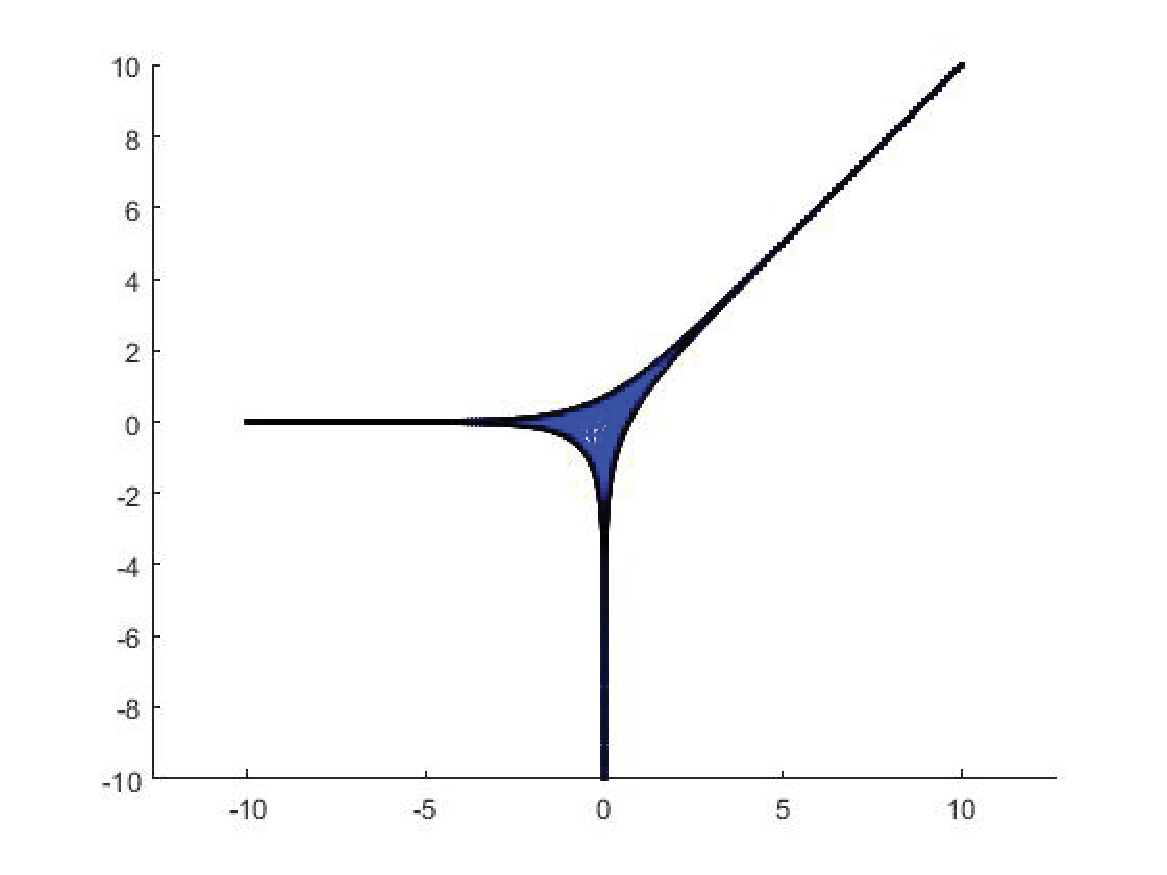} \qquad
		\includegraphics[width=0.45\linewidth]{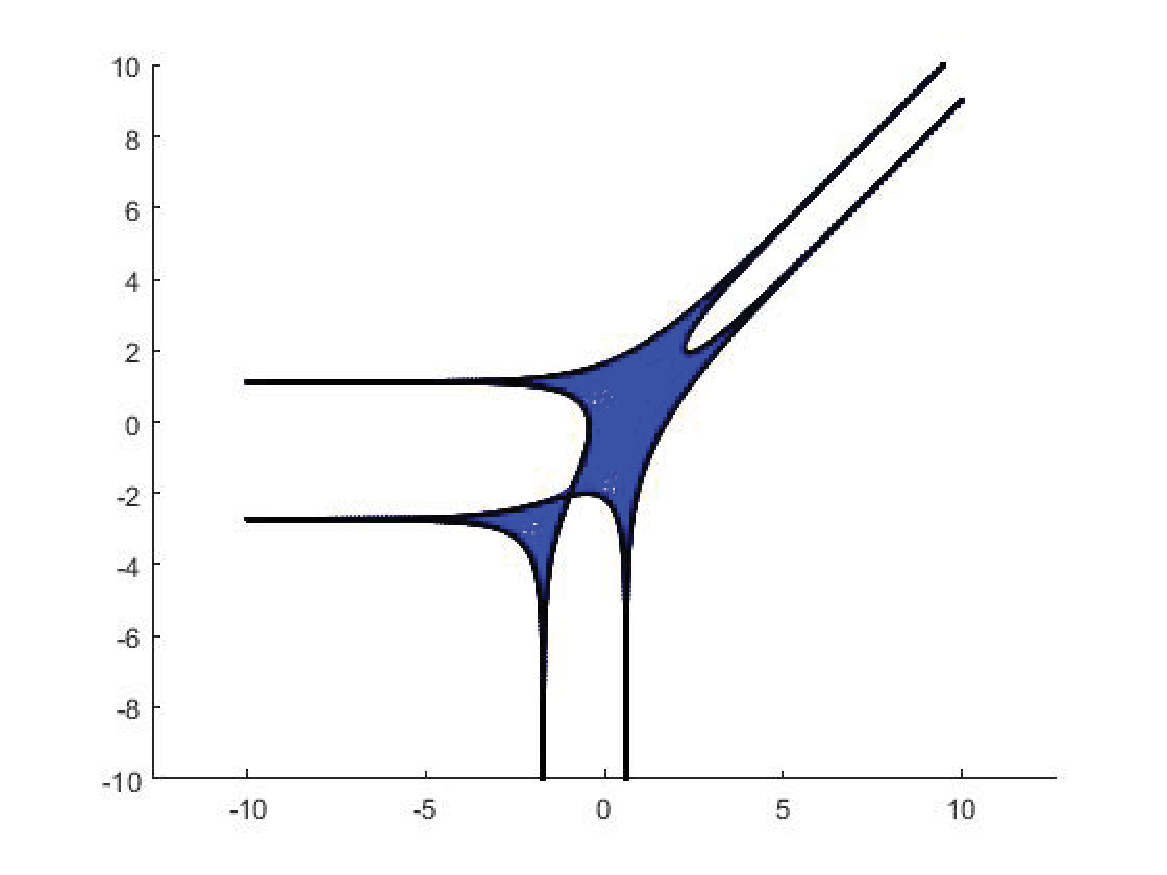}
	\end{center}
	\fi
    \caption{Amoebae for $x_1+x_2-1$ and $-1+5x_1-15x_2+10x_1x_2+3x_1^2+5x_2^2$ }
\end{figure} 

The statement is well-known; see \cite[Prop. 1.5, Page 195]{gkz}. Here, we provide the main argument of the proof for the convenience of the reader.

\begin{proof}
We have
\begin{align*} 
	f(\x) \ = \ c_{\Vector{v}} \x^{\Vector{v}} \left( 1 + \sum_{\Vector{a}_i \neq \Vector{v}} \frac{c_i}{c_{\Vector{v}}} \x^{\Vector{a}_i-\Vector{v}} \right).
\end{align*}
Set $g(\x)=\sum_{\Vector{a}_i \neq \Vector{v}} \frac{c_i}{c_{\Vector{v}}} \x^{\Vector{a}_i-\Vector{v}} $. Then for a given $\x \in (\C^{*})^{n}$ if $\abs{g(\x)} < 1$ this immediately implies $f(\x) \neq 0$ and hence $\Log|\x| \notin \Amoeba{f}$.  The rest of the proof is straightforward.
\end{proof}
\cref{Lemma:AmoebaComponentsComplementVertices} shows that for every $\Vector{v}$ of the $\New(f)$ there is an unbounded connected component in the complement of the amoeba $\Amoeba{f}$ that includes a copy of the normal cone $\NormalCone{b}$. The following are the basic facts: these connected components are distinct for every $\Vector{v}$, these connected components exhaust the list of unbounded components in the complement of $\Amoeba{f}$, and these connected components are convex \cite{Passare:Sadykov:Tsikh, passare:tsikh}.

\subsection{Real toric deformation} \label{subsection:mixedcell}
This section is to set up the real homotopy starting from combinatorial patchworking to our target system. We will require the deformation path to lie outside of a region given by a union discriminant amoebas. This will ensure there are no root paths that visit toric infinity and that the deformation preserves the geometry of the real zero set.
\begin{proposition} \label{toriclimit}
Let $A_1,A_2,\ldots,A_n \subset \Z^{n}$ be point configurations with  $\dim(A_i)=n$ for all $i \in [n]$, and let $\A=A_1 * A_2 * \cdots * A_n$ be the Cayley configuration.  
Suppose that $\Vector{v} = (\Vector{v}_{\Vector{a}})_{\{\Vector{a} \in A_i \ : \ 1 \leq i \leq n\}}  \in \R^{\A}$ and $\CC \in \mathbb{R}^{\A}$ are vectors with the following properties:
\begin{enumerate}
\item $\Vector{v}$ is not on the boundary of any secondary cone of the point configuration $\A$.
\item For every face $\Gamma$ of $\A$, except for the irrelevant faces,  the ray $\Log|\CC|+ \lambda  \Vector{v}$ for $\lambda \in [0,\infty)$ does not intersect the amoeba of $\Delta_{\Gamma}(\R)$.
\end{enumerate}
We consider a system of equations $\p_{\CC}(t,\x)=(p_1,p_2,\ldots,p_n)$:
\begin{equation} \label{tr1}
p_i(t,\x) \ = \ \sum_{\Vector{a} \in A_i} c_{\alpha} t^{- v_{\Vector{a}}} \x^{\Vector{a}}  \ \text{ for } \ i=1,2,\ldots,n .
\end{equation}
Then the real Puiseux series 
\begin{equation} \label{tr2}
\x(t)= (x_1 t^{\zeta_1}, x_2 t^{\zeta_2}, \ldots, x_n t^{\zeta_n}) + \; \text{higher order terms} 
\end{equation}
is a solution to the system $\p_{\CC}$  only if $(\Vector{\zeta},1)$ is an outer normal to a lower facet of 
\begin{align*} \conv(A_1^{\Vector{v}} + A_2^{\Vector{v}} + \ldots + A_n^{\Vector{v}})  \end{align*}
where $\struc{A_i^{\Vector{v}}}$ stands for the lifting of $A_i$ with respect to  $\Vector{v} \in \mathbb{R}^{\A}$. Moreover, $\varietyrealnonzero{p_i(t,x)}$ are isotopic for all $t \in (0,1]$.
\end{proposition}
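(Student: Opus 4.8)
The plan is to reduce Proposition~\ref{toriclimit} to the classical (complex) polyhedral homotopy analysis, since the Puiseux-series computation that identifies which exponent vectors $\Vector{\zeta}$ can occur is purely about cancellation of lowest-order terms and does not see the field. First I would substitute the candidate series $\x(t)$ from \eqref{tr2} into the $i$-th equation $p_i(t,\x)$ in \eqref{tr1}. Each monomial $c_{\Vector{a}} t^{-v_{\Vector{a}}} \x^{\Vector{a}}$ then contributes a term whose $t$-valuation is $\langle \Vector{\zeta}, \Vector{a}\rangle - v_{\Vector{a}}$, i.e., the value of the affine function $(\Vector{\zeta},1)$ evaluated on the lifted point $(\Vector{a}, -v_{\Vector{a}}) \in A_i^{\Vector{v}}$ (using the sign convention of the proposition, where $A_i^{\Vector{v}}$ is lifted by the coordinates of $\Vector{v}$; I would be careful here to match the min/lower-facet convention stated in the proposition against the max/upper-facet convention used elsewhere in the paper, as flagged in the Remark after \cref{Theorem:BerndViro}). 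For $\x(t)$ to be a genuine solution, the lowest-order terms in $t$ must cancel, which forces the minimum of $\langle \Vector{\zeta}, \Vector{a}\rangle - v_{\Vector{a}}$ over $\Vector{a} \in A_i$ to be attained at least twice; geometrically this says $(\Vector{\zeta},1)$ is an inner normal to a lower face of $\conv(A_i^{\Vector{v}})$ that contains at least two lifted points.

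The second step is to pass from "$(\Vector{\zeta},1)$ is normal to a lower face of each individual $\conv(A_i^{\Vector{v}})$" to "$(\Vector{\zeta},1)$ is normal to a lower facet of the Minkowski sum $\conv(A_1^{\Vector{v}} + \cdots + A_n^{\Vector{v}})$." This is the standard fact that the normal fan of a Minkowski sum is the common refinement of the normal fans of the summands: a vector $(\Vector{\zeta},1)$ selects, on the sum, the Minkowski sum of the faces it selects on each summand, and its selected face on the sum is a facet precisely when the selected faces on the summands jointly span the missing dimension. Here hypothesis~(1)---that $\Vector{v}$ lies in the interior of a secondary cone, so $\Delta_{\Vector{v}}$ is a fine mixed subdivision---together with $\dim \conv(A_i) = n$ guarantees the dimension count works out (cf.\ \cref{definition:finemixed} and the Cayley/secondary-polytope dictionary of \cref{subsection:polyhedralcayley}): the edges picked out on the $A_i$ by a vertex of $\Trop(A_1,\Vector{v})\cap\cdots\cap\Trop(A_n,\Vector{v})$ sum to a full-dimensional cell, so the corresponding $(\Vector{\zeta},1)$ is a facet normal of the sum. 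Conversely, any lower facet of the sum restricts to faces on the summands whose dimensions sum to $n$; I only need the "only if" direction, so I can be economical and just use the forward implication: a solution series forces double-attainment in each equation, hence a lower face on each $\conv(A_i^{\Vector{v}})$, hence (by Minkowski additivity of support functions) a lower face on the sum, and genericity of $\Vector{v}$ promotes this to a facet.

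The remaining point is to justify why hypothesis~(2)---the ray $\Log|\CC| + \lambda\Vector{v}$ misses the discriminant amoeba $\Amoeba{\nabla_{\A}(\R)}$---is invoked. I believe this is used not for the "only if" statement per se, which is really just Newton-polygon bookkeeping, but to ensure the setup is meaningful: by \cref{pamprecursor} and \cref{pamoeba}, staying outside the amoeba along the whole ray means the family $\p_{\CC}(t,\x)$ stays in a single chamber $U(T,\varepsilon)$ of the secondary fan for all $t$, so the triangulation $T = \Delta_{\Vector{v}}$ and sign data are constant, and the lower-facet structure of $\conv(A_1^{\Vector{v}}+\cdots+A_n^{\Vector{v}})$ genuinely controls the solution branches uniformly in $t$. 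I would state this connection and cite \cref{pamoeba}.

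The main obstacle I anticipate is bookkeeping rather than conceptual: getting all the sign conventions consistent---inner vs.\ outer normals, lower vs.\ upper facets, $t^{-v_{\Vector{a}}}$ vs.\ $t^{\omega(\Vector{a})}$, and $t \to \infty$ vs.\ $t \to 0$---so that the final statement reads "$(\Vector{\zeta},1)$ is an outer normal to a lower facet" exactly as written. A secondary subtlety is handling the "higher order terms" in \eqref{tr2}: one must argue that the leading coefficients $(x_1,\dots,x_n)$ are themselves forced to satisfy the facet binomial subsystem (a binomial system in the sense of \cref{subsection:binomial}), and that the valuation vector $\Vector{\zeta}$ is unaffected by the tail---this is where the genericity of $\Vector{v}$ from hypothesis~(1) does real work, since it guarantees the lifted points on each selected face are in "general position" so no unexpected cancellation among higher-order terms can create a solution with a non-facet valuation vector.
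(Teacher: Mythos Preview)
Your proposal is correct and follows essentially the same approach as the paper: the paper's proof is a three-line sketch that cites \cite[Lemma 3.1]{huberpolyhedral}, substitutes \eqref{tr2} into \eqref{tr1}, divides by the lowest-degree term, sets $t=0$, and observes that solvability forces a binomial system tied to a mixed cell. Your write-up is in fact more careful than the paper's own argument---you explicitly work through the Minkowski-sum/normal-fan passage and correctly flag that hypothesis~(2) plays no role in the ``only if'' valuation statement itself, both of which the paper leaves implicit.
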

\begin{remark}
We note that  in the statement we have $\Log|\CC|+ \lambda  \Vector{v}$ for $\lambda \in [0,\infty)$, and also $c_{\alpha} t^{-v_{\alpha}}$ for $t \in (0,1]$; these two represent the same parameter regime as $\log \abs{c_{\alpha} t^{-v_{\alpha}}} = \log{\abs{c_{\alpha}}} + v_{\alpha} \log \frac{1}{t}$. 
\end{remark}
\begin{proof}
The statement about the Puiseux series follows the same proof as \cite[Lemma 3.1]{huberpolyhedral}, so we just list the main steps: Put \cref{tr2} into \cref{tr1}, divide by the lowest degree term, and set $t=0$. The system of equations obtained this way will have at most $2n$ terms in total, and it can have a common zero only if it is a system of binomial equations. On can observe that under $\Log$-map the solutions of these binomial equations correspond to the finite number points given by Viro's method. We had already discussed in \cref{subsection:anders} and \cref{subsection:binomials} that these points given by Viro's method identify alternating mixed cells.

Now we consider the statement about isotopy. Let $S := \{ x \in \mathbb{R}^{2n-1} : x_{n+1}= x_{n+2} = \ldots = x_{2n-1} = \frac{1}{n-1} \}$. We recall that $\Cayley(\A) \cap S$ and $A_1+A_2+\ldots+A_n$ are equivalent up to scaling, see \cref{subsection:polyhedralcayley}. Let $\Gamma$ be a proper face of $\A$ that is not irrelevant, let $\tilde{\Gamma}$ be the face of $A_1+A_2+\ldots+A_n$ equivalent to $\Gamma \cap S$, and suppose $\tilde{\Gamma}= \tilde{\Gamma}_1 + \tilde{\Gamma}_2 + \ldots + \tilde{\Gamma}_n$ where $\tilde{\Gamma}_i$ is a face of $A_i$. Observe that $\Gamma =  \tilde{\Gamma}_1 * \tilde{\Gamma}_2 * \ldots * \tilde{\Gamma}_n$. By  \cref{cayleytrick}, if a polynomial system $\Vector{f|_{\Gamma}}$ satisfies $\Delta_{\Gamma}(\Vector{f|_{\Gamma}}) \neq 0$ then $\Vector{f}$ has no zero on $X_{\tilde{\Gamma}}^{\circ}$. Thus if $\Delta_{\Gamma}(\Vector{f|_{\Gamma}}) \neq 0$ for all proper faces, except the irrelevant ones, then $\Vector{f}$ has no zero on $X_{A_1+A_2+\ldots+A_n}-X_{A_1+A_2+\ldots+A_n}^{\circ}$. 

The ray $\Log|\CC|+ \lambda  \Vector{v}$ does not intersect the amoeba of $\Delta_{\Gamma}(\R)$ for any $\lambda \in [0,\infty)$. This implies $\Delta_{\Gamma}(\Vector{\Vector{p_{\CC}(t)}}) \neq 0$ for all faces $\Gamma$, except irrelevant ones, and also $\Delta_{\A}(\Vector{\Vector{p_{\CC}(t)}}) \neq 0$ for all $t \in (0,1]$. So the condition $\Delta_{\Gamma}(\Vector{\Vector{p_{\CC}(t)}}) \neq 0$ guarantees non-existence of zeros on $X_{A_1+A_2+\ldots+A_n}-X_{A_1+A_2+\ldots+A_n}^{\circ}$, and since $\Delta_{\A}(\Vector{\Vector{p_{\CC}(t)}}) \neq 0$ we have that the zero sets are isotopic on $X_{A_1+A_2+\ldots+A_n}^{\circ}$. Thus, $\varietyrealnonzero{p_i(t,x)}$ are isotopic for all $t \in (0,1]$.
\end{proof}
\vspace{-0.1 in}
\subsection{Numerically tracking a solution from toric infinity}
\label{subsection:pathtrackers}

The numerical part of our algorithm tracks real zeros of $\p_{\CC}(t,\x)$, as in \cref{toriclimit}, from  $\p_{\CC}(0,\x)$ to
$\p_{\CC}(1,\x)$. There are several technicalities to be careful about: 1-) we are not able to start the homotopy continuation precisely at $\p_{\CC}(0,\x)$ since all its zeros lie at toric infinity, 2-) we need to design an algorithm to track the solution paths  $\x(t)$, as in \cref{toriclimit}, from $t \sim 0$ to $t=1$. 

The first issue is theoretically handled by an analytic continuation argument on toric compactification, and it is practically handled by predictor-corrector methods in numerical analysis. The second part, tracking the solution paths, can be done in two ways: 
\begin{enumerate}
	\item trace the solution curves $\x(t)$ numerically, or
	\item start a homotopy from $\p_{\CC}(0,\x)$ with zeros given 
by alternating mixed cells and track the solution path from $t=0$ to $t=1$.
\end{enumerate}
Explaining details of these numerical schemes have the potential of doubling the size of our paper and the techniques are now folklore, so we prefer to have a brief account. An established reference for curve tracing approach, i.e., the first method, is \cite{numericalcont}. The curve tracing approach is often fast, and it is a standard technique in numerical analysis that is deployed in many applications. However, to the best of our knowledge, the safeguards to control precision issues for standard path trackers only exist for specific cases. The second approach  has a well developed theory to control precision issues and conduct rigorous complexity analysis  in the case of dense polynomials \cite{condition}. For sparse polynomials, Malajovich recently developed a theory that allows to express complexity of numerical tracking with certain integrals of condition numbers \cite{m2}. We briefly explain  Malajovich's approach in \cref{section:numericalcomplexity}. Our algorithm can be implemented using any of the two ways depending on the preferred trade-off between rigor and speed. For a nice exposition on comparing the two alternatives  we suggest \cite[Section 2.3 and 2.4]{bates-sottile}. 
\subsection{An entropy type formula for the discriminant locus} 
\label{subsection:Horn-Kapranov}
In this section, we introduce useful facts about $A$-discriminants, mostly relying on \cite[Chapter 9, Section 3, subsection C]{gkz} and works of Passare and Tsikh \cite{passare:tsikh}.

\begin{theorem}[Horn-Kapranov Uniformization] \label{H1}
Let $A=[ \Vector{a}_1, \Vector{a}_2, \ldots, \Vector{a}_m ]$ be a collection of lattice points in $\Z^n$, let $\nabla_A$ be the corresponding $A$-discriminant variety. 
We consider $A$ as a $n \times m$ matrix, and define
\begin{align*} 
	\struc{\Psi_A(\Vector{u},\x)} \ := \ \left[u_1 \x^{a_1} : u_2 \x^{a_2} : \ldots : u_m \x^{a_m} \right].
\end{align*}
Then $\nabla_A$ admits the following parametrization:
\begin{align*} 
	\nabla_A \ = \ \ovl{ \left\{   \Psi_A(\Vector{u},\x) \ : \ A\Vector{u}=\Vector{0} , \sum_{i = 1}^m u_i =\Vector{0}, \x \in (\C^{*})^n \right\} }.
\end{align*}
\end{theorem}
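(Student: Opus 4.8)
The plan is to prove the Horn--Kapranov uniformization by identifying, for a generic point of $\nabla_A$, the data encoding where the singularity sits and then showing those data are exactly parametrized by the kernel conditions $A\Vector{u}=\Vector{0}$, $\sum_i u_i = 0$. Concretely, I would first recall what it means for $f = \sum_{i=1}^m c_i \x^{\Vector{a}_i}$ to have a singularity at some $\x_0 \in (\C^*)^n$: this is the system $f(\x_0)=0$ and $x_j \partial f/\partial x_j (\x_0) = 0$ for $j=1,\dots,n$, i.e.\ (writing $A$ as the $n\times m$ matrix with columns $\Vector{a}_i$) the vector $\Vector{w} = (c_1 \x_0^{\Vector{a}_1}, \dots, c_m \x_0^{\Vector{a}_m})$ satisfies $A\Vector{w} = \Vector{0}$ together with $\sum_i w_i = f(\x_0) = 0$. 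So for any $f \in \nabla_A$ witnessed by a singular point $\x_0$, the vector $\Vector{w}$ lies in the linear space $L := \{\Vector{u} : A\Vector{u}=\Vector{0},\ \sum_i u_i = 0\}$; note $\Vector{w}$ has all coordinates nonzero since $c_i \neq 0$ and $\x_0 \in (\C^*)^n$. This gives the inclusion ``$\subseteq$'' at the level of the parametrizing set: given $f \in \nabla_A$ with witness $\x_0$, set $\Vector{u}=\Vector{w}$ and $\x = \x_0$; then $\Psi_A(\Vector{u},\x)_i = w_i \x_0^{\Vector{a}_i} = c_i \x_0^{2\Vector{a}_i}$, which is a reparametrization of the coefficient point $[c_1 : \cdots : c_m]$ obtained by the coordinate change $\x_0 \mapsto \x_0^{?}$ — this is the point where one must be slightly careful (see below).

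Next, the reverse inclusion ``$\supseteq$'': given $\Vector{u}$ with $A\Vector{u}=\Vector{0}$, $\sum_i u_i = 0$, and $\x \in (\C^*)^n$, I claim the polynomial with coefficients $c_i = u_i \x^{\Vector{a}_i}$ is singular --- and a natural candidate singular point is $\x$ itself, or rather a point determined by $\x$. Indeed with $c_i = u_i \x^{\Vector{a}_i}$ one computes $f(\Vector{1}) = \sum_i u_i \x^{\Vector{a}_i}$, which is \emph{not} automatically zero; the correct statement is that the singular point is $\x^{-1}$ in a suitable sense, or one normalizes so that the evaluation of $\Psi_A(\Vector{u},\x)$ at the point $\x^{-1}$ recovers $\Vector{u}$. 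Let me instead phrase it cleanly: if $\Vector{c} = \Psi_A(\Vector{u},\x)$, i.e.\ $c_i = u_i \x^{\Vector{a}_i}$ up to the common projective scalar, then evaluating $x_j \partial f / \partial x_j$ at the point $\x^{-1} = (1/x_1,\dots,1/x_n)$ gives $\sum_i \langle \Vector{a}_i, \Vector{e}_j\rangle\, c_i\, \x^{-\Vector{a}_i} = \sum_i (A)_{ji}\, u_i = (A\Vector{u})_j = 0$, and similarly $f(\x^{-1}) = \sum_i c_i \x^{-\Vector{a}_i} = \sum_i u_i = 0$. Hence $f$ has a singularity at $\x^{-1} \in (\C^*)^n$, so $\Vector{c} \in \nabla_A$ (its Zariski closure, to be safe about the boundary locus where the parametrization degenerates). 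This shows the image of $\Psi_A$ over the locus $\{A\Vector{u}=0, \sum u_i = 0\}$ lies in $\nabla_A$, and since $\nabla_A$ is the Zariski closure of the set of singular $f$'s with all coefficients nonzero, taking closures on the parametrization side matches.

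Combining the two inclusions gives the set equality after taking Zariski closure; the map $\Psi_A$ is evidently a morphism $(\C^*)^m \times (\C^*)^n \dashrightarrow \P^{m-1}$, well-defined wherever not all coordinates vanish, so the right-hand side is the closure of a constructible image and the identity of the two sides holds as claimed. \textbf{The main obstacle} I anticipate is bookkeeping with the $\x \leftrightarrow \x^{-1}$ (and the projective scaling) so that ``the point $\Psi_A(\Vector{u},\x)$ has a singularity'' is matched to a concrete singular point, together with handling the defect/degenerate locus where $A\Vector{u}=0,\ \sum u_i=0$ forces $f$ to be identically singular or the parametrization drops dimension — this is exactly why the statement is phrased with a Zariski closure $\ovl{\{\cdots\}}$, and I would be careful to only claim the closure identity rather than a set-theoretic bijection. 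A clean way to avoid the inverse is to note $\Psi_A(\Vector{u},\x)$ and $\Psi_A(\Vector{u},\x^{-1})$ have the same image as $\x$ ranges over $(\C^*)^n$, so one may freely substitute; I would state this symmetry once at the start and then only check the singularity condition at $\Vector{1}$ for the normalized representative, reducing everything to the linear identities $A\Vector{u}=\Vector{0}$ and $\sum_i u_i = 0$.
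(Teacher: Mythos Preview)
The paper does not supply its own proof of this theorem: Horn--Kapranov uniformization is quoted as a known background result, attributed to \cite[Chapter 9, Section 3, subsection C]{gkz} and Kapranov \cite{kapranov91}, and used thereafter without argument. So there is nothing in the paper to compare your proposal against.

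That said, your argument is the standard one and is essentially correct. The core observation---that $f(\x_0)=0$ together with $x_j\,\partial f/\partial x_j(\x_0)=0$ for all $j$ is exactly the statement that the vector $\Vector{w}=(c_i\x_0^{\Vector{a}_i})_i$ lies in the kernel $\{A\Vector{u}=\Vector{0},\ \sum_i u_i=0\}$---is precisely the content of the uniformization, and you have both directions. Your self-diagnosed ``$\x \leftrightarrow \x^{-1}$'' bookkeeping issue is real but harmless for exactly the reason you give at the end: since $\x\mapsto \x^{-1}$ is a bijection of $(\C^*)^n$, the images of $\Psi_A(\Vector{u},\cdot)$ over $(\C^*)^n$ coincide whether one plugs in $\x$ or $\x^{-1}$. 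The cleanest write-up simply takes $\Vector{u}=\Vector{w}$ and $\x=\x_0^{-1}$ for the forward inclusion (so that $\Psi_A(\Vector{w},\x_0^{-1})_i=c_i$ on the nose), and for the reverse inclusion checks the singularity of $f_{\Vector{c}}$ with $c_i=u_i\x^{\Vector{a}_i}$ at the point $\x^{-1}$, as you do. The only additional remark worth making explicit is that $\nabla_A$ is torus-invariant (if $f$ is singular at $\x_0$ then the rescaled $f$ with coefficients $c_i\lambda^{\Vector{a}_i}$ is singular at $\lambda^{-1}\x_0$), which is what lets one pass freely between affine and projective descriptions and absorbs the ``reparametrization'' you were worried about. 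The Zariski closure on the right-hand side then handles the locus where some $u_i$ vanish, exactly as you note.
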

Now consider the amoeba of $\nabla_A$:
\begin{align*}
	\Log|\nabla_A| \ = \ \Log\left| \left\{ \Vector{u} \ : \  A\Vector{u}=\Vector{0} , \sum_{i = 1}^m u_i =\Vector{0} \right\}\right| \ + \ \left( \Log|\x| \right)^T A
\end{align*}
where $+$ denotes the Minkowski sum. 

It is easy to observe that $\left( \Log|\x| \right)^T A$ corresponds to the row span of $A$. Moreover, for any $\Vector{u}$ with $A\Vector{u}=\Vector{0}, \sum_{i = 1}^m u_i =\Vector{0}$ any scalar multiple of $\Vector{u}$ satisfies the same equations. This $n$-dimensional row span and one dimensional linear space represents $n+1$ homogeneities that are present in the discriminant variety; the variety is invariant under torus action and scaling.

For a given hypersurface $\variety{f} \subseteq (\C^*)^n$ consider all points which are critical under the $\Log|\cdot|$ map. The $\Log|\cdot|$-image of these points is called the \textit{\struc{contour}} of the corresponding amoeba $\Amoeba{f}$; see e.g., \cite{passare:tsikh}. It is straightforward to show that the contour contains the boundary $\partial \Amoeba{f}$, but does not coincide with it in general; see e.g., \cite{passare:tsikh}.  Moreover, for a real polynomial $f$, the contour contains the amoeba of the smooth part of the real variety, i.e. $\Amoeba{\varietyrealnonzero{f}}$ \cite{passare:tsikh}.

Let $\struc{B}$ be a \textit{\struc{Gale dual}} of $A$, i.e, an $m \times (m-n-1)$ integer matrix that has all column sums to be $0$ and satisfies  $AB=\Vector{0}$. Then, for any $\Vector{u} \in (\R^{*})^{m}$ with $A \Vector{u} = \Vector{0}$ and $\sum_i u_i = 0$ one can find a $\Vector{\zeta} \in (\R^{*})^{m-n-1}$ with $\Vector{u}=B \Vector{\zeta}$.  

It follows from the discussion in \cite{passare:tsikh} (see the section titled Discriminants and Real Contours, and specifically Theorem 4), that the parametrization of the contour of  the reduced $A$-discriminant amoeba $B^{T}\Amoeba{\nabla_A(\C)}$ is given as follows:
\begin{align} \label{H1}
 B^T \Log\left| \left\{ \Vector{u} : \Vector{u} \in (\R^{*})^m  , A \Vector{u} = \Vector{0} , \sum_i u_i = \Vector{0} \right\}\right|.  
 \end{align}
Using \cref{H1} and the fact that contour includes the amoeba of the real part of the variety, one can concisely write
\begin{equation} \label{H2}
B^{T} \Amoeba{\nabla_A(\R)} \ \subseteq \ \left\{ B^{T} \Log|\Vector{u}| \ : \  \Vector{u} \in (\R^{*})^m, A \Vector{u} = \Vector{0} , \sum_{i = 1}^m u_i = 0 \right\}. 
\end{equation}

Using the row space of $B$ to parameterize the set $\{ \Vector{u} \in (\R^{*})^m, A \Vector{u} = \Vector{0} , \sum_{i = 1}^m u_i = 0 \}$, this can also be written as follows:
\begin{equation} \label{H4}
B^{T} \Amoeba{\nabla_A(\R)}  \ \subseteq \ \left\{ \sum_{i = 1}^m \Vector{b(i)} \log \abs{ \langle \Vector{b(i)} , \Vector{\zeta} \rangle} \ : \ \Vector{\zeta} \in (\R^{*})^{m-n-1}  \right\}
\end{equation}
where the $\Vector{b(i)}$ denote the rows of $B$. As a next step, we define the following map:
\begin{align*} 
	\struc{\phi_A}: (\R^{*})^{m-n-1}  \rightarrow (\R^{*})^{m-n-1} \; \; , \; \;  \phi_A(\Vector{\zeta}) =  \sum_{i = 1}^m \Vector{b(i)} \log \abs{ \langle \Vector{b(i)} , \Vector{\zeta} \rangle}.   
\end{align*}
The facts listed follows from \cite[Chapter 9, Section 3, subsection C]{gkz}:
\begin{enumerate} \label{horn}
\item The map $\phi_A$ is $0$-homogeneous, that is for every $\lambda \in (0,\infty)$ and $\Vector{\zeta} \in (\R^{*})^{m-n-1}$ we have
\begin{align*} 
	\phi_A(\lambda \Vector{\zeta}) \ = \ \phi_A(\Vector{\zeta}).
\end{align*} 
\item The image of the map $\phi_A$ is a hypersurface, and if the \struc{\textit{Gauss map} $\gamma$} is defined at $\phi_A(\Vector{\zeta})$ then we have 
\begin{align*} 
	\gamma( \phi_A(\Vector{\zeta}) ) = \Vector{\zeta}.
\end{align*}
\end{enumerate}
The first property follows since the column sums of $B$ equals $0$. The second property is proved by Kapranov \cite{kapranov91}. Now assume that we have a $\Vector{\zeta} \in (\R^{*})^{m-n-1}$, and we would like to write down the equation of the tangent hyperplane $\struc{H_{\Vector{\zeta}}}$ at $\phi_A(\Vector{\zeta})$. 

Since we know the image under the Gauss map (i.e., the normal direction), we obtain:
\begin{align*} 
	H_{\Vector{\zeta}} \ = \ \left\{ \x \in \R^{m-n-1} : \langle x , \Vector{\zeta}  \rangle= \langle \phi_A(\Vector{\zeta}) , \Vector{\zeta} \rangle  \right\}.
\end{align*}
One can rewrite this as follows:
\begin{equation} \label{hyperplane-A}
 H_{\Vector{\zeta}} \ = \ \left\{  \x \in \R^{m-n-1} : \langle \Vector{\zeta} , \x \rangle = \sum_{i = 1}^m  \langle \Vector{b(i)}, \Vector{\zeta}  \rangle \log \abs{ \langle \Vector{b(i)} , \Vector{\zeta} \rangle}  \right\}.
\end{equation}


\section{Effective Viro's Patchworking} \label{amoebamagic}
Consider a polynomial system $\Vector{p}=(p_1,p_2,\ldots,p_n)$ with support sets $A_1,A_2,\ldots,A_n$ and the coefficient vector $\CC=(\CC_1,\CC_2, \ldots, \CC_n)$. How do we decide if the common real zero set of $\Vector{p}$  (up to continuous deformation) can be described by Viro's patchworking method? 
Here we present a way to certify if this is the case for a given system $\Vector{p}$: We search for a ray $\Log|\CC| + \lambda \Vector{v}$  that  does not intersect the discriminant amoebae for all faces $\Gamma$ of $\A$, except the irrelevant ones. This represents a real toric deformation starting from $\Vector{p}$ as in \cref{toriclimit}. 

\subsection{Statement of main result and an example}
We keep the notation from \cref{subsection:Horn-Kapranov}, what follows is the main result of this section. 

\begin{proposition} \label{Proposition:MainCertificateOurAlgorithm}
Let $\Vector{p}_{\CC}$ be a system of sparse polynomials with coefficient vector $\CC$ and support sets $A_1,A_2,\ldots,A_n \subset \Z^n$ where $\dim (\A_i)=n$ for all $1 \leq i \leq n$. Let $T$ be the triangulation of the Cayley configuration $\A=A_1*A_2*\ldots*A_n$ that is introduced by using $\Log \CC $ as a lifting function. Let $M(T)$ be the corresponding mixed cell cone, and suppose that the dual cone $M(T)^{\circ}$ is generated by vectors $\Vector{\zeta(1)},\ldots,\Vector{\zeta(L)}$. Then, if 
\begin{equation} \label{crt}
 \langle \Log \CC , \Vector{\zeta(i)} \rangle \ > \ \log(\# \A) \norm{\Vector{\zeta(i)}}_1  
\end{equation}
for all $i=1,2,\ldots,L$, the system $\Vector{p}_{\CC}$ is a patchworked polynomial system. Furthermore, for any $\Vector{v} \in M(T)$ the ray $\Log \CC + \lambda \Vector{v}$ for $\lambda \in  [0,\infty)$ does not intersect the amoeba of $\Delta_{\Gamma}$ for all faces $\Gamma$ of $\A$ except for the irrelevant ones.
\end{proposition}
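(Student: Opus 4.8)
The plan is to reduce the claim to the amoeba-avoidance statement in \cref{Lemma:AmoebaComponentsComplementVertices}, applied not to the principal $A$-determinant $E_{\A}$ but directly to the $\A$-discriminant $\nabla_{\A}(\R)$, and to use the Horn--Kapranov description from \cref{subsection:Horn-Kapranov} to make the bound effective. First I would recall that, by \cref{Lemma:Mixed-cell-cone}, if $T = \Delta_{\Log|\CC|}$ and $\Vector{v}$ is the vertex of the $\A$-discriminant Newton polytope whose normal cone contains $\Log|\CC|$, then $M(T) \subseteq \NormalCone{\Vector{v}}$; hence any ray $\Log|\CC| + \lambda \Vector{v}'$ with $\Vector{v}' \in M(T)$ stays inside the cone $\Log|\CC| + \NormalCone{\Vector{v}}$ (using convexity of the normal cone and that $\Log|\CC| \in \NormalCone{\Vector{v}}$ already). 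So it suffices to show that the \emph{translated} cone $\Log|\CC| + \NormalCone{\Vector{v}}$ — or at least $\Log|\CC| + M(T)$ — misses $\Amoeba{\nabla_{\A}(\R)}$.

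The second step is to rephrase condition \cref{crt} as a separation statement. A point $\Vector{w}$ lies in $\Log|\CC| + M(T)$ iff $\Vector{w} - \Log|\CC| \in M(T)$, i.e. iff $\langle \Vector{w} - \Log|\CC|, \Vector{\zeta} \rangle \geq 0$ for every $\Vector{\zeta}$ in the dual cone $M(T)^{\circ}$, equivalently for every generator $\Vector{\zeta(i)}$. I would then invoke the effective amoeba-complement bound: adapting the computation behind \cref{Lemma:AmoebaComponentsComplementVertices} to $\nabla_{\A}$ (whose Newton polytope sits inside the secondary polytope $\secondpoly{\A}$, whose vertex $\Vector{v}$ corresponds to the triangulation $T$, and which has at most $\#\A$ monomials in the relevant local expansion), a point $\Vector{w}$ is guaranteed to lie in the complement component attached to $\Vector{v}$ as soon as $\langle \Vector{w}, \Vector{v} - \Vector{a} \rangle$ exceeds $\log(\#\A \cdot |c_{\Vector{a}}/c_{\Vector{v}}|)$ for all other exponents $\Vector{a}$ — and the directions $\Vector{v} - \Vector{a}$ appearing here are precisely (positive combinations of) the circuit normals generating $M(T)^{\circ}$, namely the $\Vector{\zeta(i)}$. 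The inequality $\langle \Log|\CC|, \Vector{\zeta(i)} \rangle > \log(\#\A)\,\|\Vector{\zeta(i)}\|_1$ is exactly what one needs so that $\Log|\CC|$ itself — and therefore, by the non-negativity of $\langle \Vector{w} - \Log|\CC|, \Vector{\zeta(i)}\rangle$ on the cone, every point $\Vector{w}$ of $\Log|\CC| + M(T)$ — satisfies the strict inequality of \cref{Lemma:AmoebaComponentsComplementVertices}; the $\|\Vector{\zeta(i)}\|_1$ factor absorbs the $\log$ of the coefficient ratios once one normalizes the $\Vector{\zeta(i)}$ to be integer circuit vectors. Then \cref{pamoeba} (in its $M(T)$ form, as indicated after \cref{Lemma:Mixed-cell-cone}) gives that the real topology of $\Vector{p}_{\CC}$ is governed by $T$ and the signs of $\CC$, i.e. $\Vector{p}_{\CC}$ is patchworked, and the ray statement follows verbatim from the cone containment.

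The main obstacle I expect is the bookkeeping in the second step: matching the raw geometric inequality of \cref{Lemma:AmoebaComponentsComplementVertices} — which is phrased in terms of the differences $\Vector{v} - \Vector{a}$ of exponent vectors of $E_{\A}$ or $\nabla_{\A}$ — with the clean, coordinate-free inequality \cref{crt} phrased in terms of generators $\Vector{\zeta(i)}$ of the abstract dual cone $M(T)^{\circ}$. This requires knowing that the edges of $\secondpoly{\A}$ (equivalently, the facet normals of $M(T)$) are the circuit vectors of $\A$, that these can be taken integral with controlled $\ell_1$-norm, and that the coefficient-ratio term $\log|c_{\Vector{a}}/c_{\Vector{v}}|$ in the principal/discriminant-determinant expansion is bounded by $\|\Vector{\zeta(i)}\|_1 \log(\#\A)$ up to the slack already built into \cref{crt}; the Horn--Kapranov/entropy description of $\partial\Amoeba{\nabla_{\A}}$ from \cref{subsection:Horn-Kapranov} is what makes the passage from $E_{\A}$ to $\nabla_{\A}$ legitimate and keeps the monomial count at $\#\A$ rather than something larger. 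Everything else — the convexity of normal cones, the dual-cone reformulation of membership, and the appeal to \cref{pamoeba} — is routine.
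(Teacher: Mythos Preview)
Your overall architecture---show $\Log|\CC|$ lies in the amoeba-complement component $K_{\Vector{\eta}}$ attached to the discriminant vertex $\Vector{\eta}$ with $M(T)\subseteq\NormalCone{\Vector{\eta}}$, then use convexity to push the whole ray into $K_{\Vector{\eta}}$---matches the paper. The gap is in how you propose to certify $\Log|\CC|\in K_{\Vector{\eta}}$.

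You want to apply \cref{Lemma:AmoebaComponentsComplementVertices} directly to the polynomial $\nabla_{\A}$ and then argue that the right-hand side $\log\bigl(m\,|c_{\Vector{a}}/c_{\Vector{v}}|\bigr)$ collapses to $\log(\#\A)\,\|\Vector{\zeta(i)}\|_1$. This does not work: in that lemma $m$ is the number of \emph{monomials of the discriminant}, not $\#\A$, and the coefficients $c_{\Vector{a}}$ of $\nabla_{\A}$ (or of $E_{\A}$) are enormous integers with no a priori bound in terms of $\#\A$. The sentence ``the $\|\Vector{\zeta(i)}\|_1$ factor absorbs the $\log$ of the coefficient ratios'' is exactly the step that fails, and the Horn--Kapranov description does not fix it by ``keeping the monomial count at $\#\A$''.

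What the paper actually does is bypass the discriminant's coefficients altogether. From Horn--Kapranov and the Gauss-map property it extracts that every supporting hyperplane of $B^{T}K_{\Vector{\eta}}$ has the explicit form
\[
\langle \Vector{\zeta},\x\rangle \;=\; \sum_{i=1}^{m}\langle \Vector{b(i)},\Vector{\zeta}\rangle\,\log\bigl|\langle \Vector{b(i)},\Vector{\zeta}\rangle\bigr|,
\qquad m=\#\A,
\]
and then bounds the right-hand side by an \emph{entropy estimate} (\cref{nice}, \cref{usefulestimate}): since the column sums of $B$ vanish, the vector $B\Vector{\zeta}$ has zero coordinate sum, and splitting it into positive and negative parts gives $\bigl|\sum_i \langle \Vector{b(i)},\Vector{\zeta}\rangle\log|\langle \Vector{b(i)},\Vector{\zeta}\rangle|\bigr|\le \tfrac12\|B\Vector{\zeta}\|_1\log m$. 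This is the step that produces the clean $\log(\#\A)$ constant; it is not bookkeeping but the analytic heart of the argument. After that, one passes from $(B^{T}M(T))^{\circ}$ to $M(T)^{\circ}$ and from the full cone to its generators $\Vector{\zeta(i)}$ via the triangle inequality, which is the easy part you already sketched.
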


Note that, for any coefficient vector $\CC$ with corresponding triangulation $T$, if the generators of the dual mixed cell cone $M(T)^{\circ}$ are $\Vector{\zeta(i)}$ for $i=1,2,\ldots,L$, then, by definition,
\begin{align*}
	\langle \Vector{\zeta(i)} , \Log \CC \rangle \ > \  0
\end{align*}
for all $i=1,2,\ldots,L$. To apply \cref{Proposition:MainCertificateOurAlgorithm}, we need
\begin{align*}
	\langle \Vector{\zeta(i)}, \Log \CC  \rangle \ > \ \log(\# \A) \norm{\Vector{\zeta(i)}}_1.
\end{align*}
Here $\norm{\Vector{\zeta(i)}}_1 $ is a normalization; one can just use normalized generators with unit $\ell_1$-norm. So the loss in our relaxation is represented by the logarithmic term $\log(\# \A)$.

Let us illustrate \cref{Proposition:MainCertificateOurAlgorithm} on the simplest case: univariate polynomials. Let $A=\{ 0 , a_1, a_2, \ldots, a_{2d} \} \subset \mathbb{Z}$, and let $p(x)=c_0 + c_1 x^{a_1} + c_2 x^{a_2} + \ldots + c_{2d} x^{a_{2d}}$. Here a triangulation is subdivision of the interval $[0,a_{2d}]$ into a union of smaller sub-intervals $[a_i,a_j]$. Suppose the lifting function $\Log C =(\log \abs{c_0}, \log \abs{c_1}, \ldots , \log \abs{c_{2d}})$ introduces the triangulation  $T = \{ [0,a_2], [a_2, a_4], \ldots, [a_{2(d-1)},a_{2d}] \}$. The first ``simplex'' being $[0,a_2]$ means for every $a_i$ with $i \neq 2$ we must have $(a_i, \log \abs{c_i})$ lying above the line segment $\{ (0, \log \abs{c_0}), (a_2, \log \abs{c_2}) \}$. In terms of circuit inequalities,  this means the following
\[ \frac{\log \abs{c_2} -\log \abs{c_0}}{a_2} < \frac{\log \abs{c_i} - \log \abs{c_0}}{a_i} \; \;  \text{for} \; i=1,3,4,5,\ldots, d \]
Or, equivalently
\[ \log \abs{c_0} (a_i-a_2) - \log \abs{c_2} a_i  + \log \abs{c_i} a_2  > 0  \; \;  \text{for} \; i=1,3,4,5,\ldots, d \]
The hypothesis of  \cref{Proposition:MainCertificateOurAlgorithm}  amounts to 
\[ \log \abs{c_0} (a_i-a_2) - \log \abs{c_2} a_i + \log \abs{c_i} a_2 > \log (d+1) (\abs{a_i-a_2} + a_i + a_2). \]

If the hypothesis of \cref{Proposition:MainCertificateOurAlgorithm} is satisfied for all the circuit inequalities of the triangulation $T$ (that is all generators of $M(T)^{\circ}$) then the number of real zeros of $p(x)$ can be counted as follows:  Let $\sgn c_i$ represent the signs of $c_i$, set
\[ \sgn C :=( \sgn c_0, \sgn c_2, \sgn c_4, \ldots, \sgn c_{2d} ), \]
and let $k$ be the number of sign changes in the vector $\sgn C$. The vector $\sgn C$ represents the signs relevant to the triangulation $T$, and due to nature of $T$ we have the same sign vector on``negative orthant" $(-\infty,0)$. Then \cref{toriclimit} combined with \cref{Proposition:MainCertificateOurAlgorithm} says $p$ has $2k$ many real zeros.
\subsection{Some basic results on the complement of $A$-discriminant amobea}
For simplicity, we let $m= \# \A$. Note that $\A \subset \mathbb{Z}^{2n-1}$. Here we assume the reader is familiar with the basic facts from \cref{subsection:amoeba} and start with the following lemma. 
\begin{lemma} \label{Lemma:convex}
Let $\Vector{\eta} $ be a vertex of the Newton polytope  of $\Delta_{\A}$, and let $K_{\Vector{\eta} }$ be the corresponding connected component in the complement of the $\Vector{A}$-discriminant amoeba. 
\begin{enumerate}
\item Let $\Vector{u} \in K_{\Vector{\eta} }$ and let $\Vector{v} \in \NormalCone{\Vector{\eta}}$ then the ray $\Vector{u}+ \lambda \Vector{v}$ for $\lambda \in [0,\infty)$ does not intersect the $\Vector{A}$-discriminant amoeba. 
\item Let $\Phi_{\A}$ and $B$ respectively be the map and the matrix defined in \cref{subsection:Horn-Kapranov}. Suppose $\Vector{\zeta} \in \mathbb{R}^{m-2n}$ with $\Phi_\A(\Vector{\zeta}) \in \partial(B^{T} K_{\Vector{\zeta}})$, then $\Vector{\zeta} \in \left( B^{T} \NormalCone{\Vector{\eta}} \right)^{\circ}$.   
\end{enumerate} 
\end{lemma}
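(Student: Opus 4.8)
The plan is to treat the two items essentially as a single geometric statement viewed through two lenses: the convexity of amoeba complement components, and its shadow under the Gale-dual/Horn--Kapranov parametrization. For item (1), I would argue as follows. By the general fact recalled just before \cref{Lemma:AmoebaComponentsComplementVertices} (and \cite[Corollary 8]{Passare:Sadykov:Tsikh}), each connected component of the complement of the $\Vector{A}$-discriminant amoeba that corresponds to a \emph{vertex} $\Vector{\eta}$ of the secondary polytope $\Sigma_{\A}$ is convex and contains a full translate of the recession cone $\NormalCone{\Vector{\eta}}$: concretely, by \cref{Lemma:AmoebaComponentsComplementVertices} applied to the polynomial $E_{\A}$ whose Newton polytope is $\Sigma_{\A}$, for any $\Vector{b}\in\interior{\NormalCone{\Vector{\eta}}}$ there is $\lambda_0>0$ with $\lambda_0\Vector{b}+\NormalCone{\Vector{\eta}}$ disjoint from the amoeba, and this translated cone lies in $K_{\Vector{\eta}}$. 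Since $K_{\Vector{\eta}}$ is convex, and a convex set containing a translate of a cone $C$ and also any point $\Vector{u}$ of the set must contain $\Vector{u}+C$ (take convex combinations of $\Vector{u}$ with points far out along the translated cone and let the combination parameter tend to the cone side), we conclude $\Vector{u}+\NormalCone{\Vector{\eta}}\subseteq K_{\Vector{\eta}}$ for every $\Vector{u}\in K_{\Vector{\eta}}$. In particular the ray $\Vector{u}+\lambda\Vector{v}$, $\lambda\ge 0$, stays in $K_{\Vector{\eta}}$ and hence never meets the amoeba. I should be slightly careful to state the recession-cone-containment argument cleanly, since $\NormalCone{\Vector{\eta}}$ is a polyhedral cone and $K_{\Vector{\eta}}$ is only known to be open and convex.

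For item (2), the point is to transport the statement of item (1) through the linear map $B^{T}$ and to match it with the Horn--Kapranov picture from \cref{subsection:Horn-Kapranov}. I would first observe that $B^{T}$ maps $K_{\Vector{\eta}}$ (a component of the complement of $\Amoeba{\nabla_{\A}(\R)}$ after quotienting by the $n+1$ homogeneities, i.e.\ the reduced amoeba $B^{T}\Amoeba{\nabla_{\A}(\R)}$) to a convex set in $\R^{m-2n}$ whose recession cone is $B^{T}\NormalCone{\Vector{\eta}}$; this is because $B^{T}$ kills exactly the row span of $\A$ together with the all-ones vector (property (4) of the secondary fan), which is precisely the lineality direction common to all secondary cones, so the image cone is again full-dimensional and pointed in the reduced space. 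Next, by \cref{H4} and the discussion around \cref{horn}, the contour of the reduced discriminant amoeba is the image of $\phi_{\A}$, and the Gauss map recovers $\Vector{\zeta}$ from $\phi_{\A}(\Vector{\zeta})$. So if $\phi_{\A}(\Vector{\zeta})$ lies on the boundary of the reduced component $B^{T}K_{\Vector{\eta}}$, then $\Vector{\zeta}$ is (up to the $0$-homogeneity) the outer normal of a supporting hyperplane of the convex set $B^{T}K_{\Vector{\eta}}$ at that boundary point. A supporting outer normal of a convex set at a boundary point is a nonnegative functional on the recession cone, i.e.\ it lies in the polar $\bigl(B^{T}\NormalCone{\Vector{\eta}}\bigr)^{\circ}$ — this is the definition of the polar cone together with the fact that the recession cone of $B^{T}K_{\Vector{\eta}}$ is $B^{T}\NormalCone{\Vector{\eta}}$. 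That is exactly the claim.

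The main obstacle I anticipate is bookkeeping around the two ``ambient'' amoebas and the reduction: \cref{Lemma:AmoebaComponentsComplementVertices} is stated for $E_{\A}$, whereas the lemma talks about the $\Vector{A}$-discriminant amoeba $\Amoeba{\nabla_{\A}}$; I would bridge this exactly as in \cref{pamprecursor} and the paragraph after it, using $\nabla_{\A}\subseteq\{E_{\A}=0\}$ and $\New(E_{\A})=\Sigma_{\A}$, so that the complement component $K_{\Vector{\eta}}$ of the discriminant amoeba \emph{contains} the corresponding complement component of $\Amoeba{E_{\A}}$ and in particular contains the translated normal cone. A second, more delicate point is that the Horn--Kapranov parametrization only gives the \emph{contour}, which contains but may strictly exceed $\partial\Amoeba{\nabla_{\A}(\R)}$; however, for item (2) we are given a point $\phi_{\A}(\Vector{\zeta})$ that is assumed to lie on $\partial(B^{T}K_{\Vector{\eta}})$, so it is automatically on the boundary of the (reduced, real) amoeba, and the supporting-hyperplane argument applies directly — I should make sure the statement is read this way rather than asking for the converse. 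Modulo these identifications, both parts reduce to the elementary convex-geometry fact that supporting functionals at boundary points of a convex set with recession cone $C$ lie in $C^{\circ}$.
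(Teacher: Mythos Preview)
Your proposal is correct and follows essentially the same approach as the paper: both parts reduce to the convex-geometry fact that $K_{\Vector{\eta}}$ contains a translate of $\NormalCone{\Vector{\eta}}$ (via \cref{Lemma:AmoebaComponentsComplementVertices}), and hence any supporting functional lies in the polar, combined with the Horn--Kapranov Gauss-map property for part~(2). The only cosmetic difference is that for part~(1) the paper argues via supporting hyperplanes (showing every supporting normal $\Vector{w}$ lies in $\NormalCone{\Vector{\eta}}^{\circ}$, so the ray never crosses one) whereas you phrase the same step as a recession-cone containment; these are dual formulations of the same elementary fact, and your extra bookkeeping about $E_{\A}$ versus $\nabla_{\A}$ and contour versus boundary is a welcome clarification that the paper's proof leaves implicit.
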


\begin{proof}
As $K_{\Vector{\eta} }$ is a component of the complement of an amoeba, it is a convex set. 
Moreover, by \cref{Lemma:AmoebaComponentsComplementVertices}, it includes a shifted copy of $\NormalCone{\Vector{\eta}}$. 
Now let  $H_{\w} := \{ \langle \w , \x \rangle =  c \} $ be a supporting hyperplane of $K_{\Vector{\eta} }$ (i.e., for every $\Vector{y} \in K_{\Vector{\eta} }$ we have $\langle \w , \Vector{y} \rangle \geq c$). We claim  $\w \in \NormalCone{\Vector{\eta}}^{\circ}$: otherwise the shifted copy of the cone $\NormalCone{\Vector{\eta}}$, that is included in $K_{\Vector{\eta} }$, would intersect the supporting hyperplane $H_{\w}$, which is a contradiction. 

Let $\Vector{u} \in K_{\Vector{\eta} }$ and $\Vector{v} \in \NormalCone{\Vector{\eta}}$. 
Then we have for any $\w \in \NormalCone{\Vector{\eta}}^{\circ}$ and $\lambda > 0$
\begin{align*}
	\langle \w , \Vector{u} \rangle \ \leq \ \langle \w , \Vector{u} + \lambda\Vector{v} \rangle.
\end{align*}
Hence, the ray $\Vector{u} + \lambda \Vector{v} $ does not intersect any supporting hyperplane of $K_{\Vector{\eta} }$, and in consequence does not intersect the boundary of the convex set $K_{\Vector{\eta} }$.

Now suppose that we have a $\Vector{\zeta} \in \mathbb{R}^{m-2n}$ with $\Phi_\A(\Vector{\zeta}) \in \partial(B^{T} K_{\Vector{\zeta}})$, then by the second property in \cref{horn} the supporting hyperplane at $\Phi_\A(\Vector{\zeta})$ will be
\begin{align*}
	H_{\Vector{\zeta}} \ := \ \left\{  \x \in \R^{m-2n} \ : \ \langle \Vector{\zeta} , \x \rangle = \sum_{i}^m  \langle \Vector{b(i)}, \Vector{\zeta}  \rangle \log \abs{ \langle \Vector{b(i)} , \Vector{\zeta} \rangle}  \right\}. 
\end{align*}
Since there is a shifted copy of $B^{T} \NormalCone{\Vector{\eta}}$ inside the convex set $B^{T} K_{\Vector{\eta} }$, this shows that $\Vector{\zeta} \in \left( B^{T}   \NormalCone{\Vector{\eta}} \right)^{\circ}$. 
\end{proof}

The Gale dual matrix $B$ in \cref{Lemma:convex} is of size $m \times (m-2n)$. Thus, $B^{T} K_{\Vector{\zeta}}$ is a projection of $K_{\Vector{\eta} }$ from $\R^m$ to $\R^{m-2n}$. The kernel of the matrix $B^{T}$ is included in every connected component $K_{\Vector{\eta} }$ of the complement of the $\A$-discriminant amoeba and this projection creates no loss of generality. There are two ways to see this: Formally the kernel of $B^T$ is the span of rows of $A$ and $(1,1,\ldots,1)$ vector, and in \cref{theorem:secondarypolytope} it was noted that this space is included in every secondary cone and hence also in $\NormalCone{\Vector{\eta}}$ by \cref{Lemma:Mixed-cell-cone}.  Geometrically, the kernel of $B^T$ represents the homogeneities present in the $\Vector{A}$-discriminant variety as explained in \cref{subsection:Horn-Kapranov}.

Given a point $\Log|\CC|$, testing if $\Log|\CC| \in K_{\Vector{\eta} }$ is equivalent to testing if $B^{T} \Log|\CC|  \in B^{T} K_{\Vector{\eta} }$; the kernel of $B^{T}$ is included in all $K_{\Vector{\eta} }$. One can test whether $B^{T} \Log|\CC|  \in B^{T} K_{\Vector{\eta} }$ by checking all the supporting hyperplanes of $B^{T} K_{\Vector{\eta} }$ due to convexity. By \cref{Lemma:convex} and the discussion in \cref{subsection:Horn-Kapranov} we know that these supporting hyperplanes are of the form
\begin{align*}
	H_{\Vector{\zeta}} \ := \ \left\{  \x \in \R^{m-2n} : \langle \Vector{\zeta} , \x \rangle = \sum_{i}^m  \langle \Vector{b(i)}, \Vector{\zeta}  \rangle \log \abs{ \langle \Vector{b(i)} , \Vector{\zeta} \rangle}  \right\}
\end{align*}
for some $\Vector{\zeta} \in \left( B^{T} \NormalCone{\Vector{\eta}} \right)^{\circ}$. Now let $T$ be a triangulation of $\A$, and let $\Vector{\eta} $ be a vertex in the Newton polytope of $\Delta_{\A}$ with the property 
\begin{align*}
	\NormalCone{T} \ \subseteq \ M(T) \ \subseteq \ \NormalCone{\Vector{\eta}},
\end{align*}
see \cref{Lemma:Mixed-cell-cone}. By linearity, this means
\begin{align*}
	& B^{T}\NormalCone{T} \ \subseteq \ B^{T}M(T) \ \subseteq \ B^{T}\NormalCone{\Vector{\eta}}, \ \text{ and } \\
	& \left( B^{T}\NormalCone{\Vector{\eta}} \right)^{\circ} \ \subseteq \ \left( B^{T} M(T) \right)^{\circ} \ \subseteq \ \left( B^{T} \NormalCone{T} \right)^{\circ}.
\end{align*}
Instead of checking hyperplanes defined by $ \Vector{\zeta} \in \left( B^{T} \NormalCone{\Vector{\eta}} \right)^{\circ}$ we check the inequalities given by the larger cone $\left( B^{T} M(T) \right)^{\circ}$.  Before we explain the reason for this, we make an observation: $B(B^{T} M(T))^{\circ} \subseteq M(T)^{\circ}$ as follows:
\begin{align*}
	\x \in (B^{T} M(T))^{\circ} \ \Rightarrow \ \langle B \x , \Vector{y} \rangle \geq 0 \; \text{for all} \; \Vector{y} \in M(T).
\end{align*}
Also note that by definition we have
\begin{align*}
	\langle \Vector{\zeta} , B^{T} \Log \CC \rangle \ = \ \langle B \Vector{\zeta}, \Log \CC \rangle.
\end{align*}
So instead of using $\langle \Vector{\zeta} , B^{T} \Log \CC \rangle > 0$ for $ \Vector{\zeta} \in \left( B^{T} \NormalCone{\Vector{\eta}} \right)^{\circ}$  as our criterion we will use  $ \tau, \Log \CC \rangle > 0$ for all $\tau \in M(T)^{\circ}$. There are two  reasons for this: First reason is that to ensure the criterion in \cref{toriclimit} is satisfied we indeed have to check with all circuit inequalities in  $M(T)^{\circ}$. \cref{toriclimit}  involves amoebae of all $\Delta_{\Gamma}$ for all faces of $\A$, except the irrelavant ones, and checking with the dual cones coming from all such $\Gamma$ is equivalent to checking all inequalities in $M(T)^{\circ}$. This first reason will become more clear in \cref{finalshowdown}. The second reason is algorithmic efficiency: we had already computed the generators of $M(T)^{\circ}$ along the way, these are the circuit inequalities computed by Jensen's tropical homotopy algorithm Hence using $M(T)^{\circ}$ does not yield a significant computational cost.
\subsection{Quantitative Estimates}
\begin{lemma} \label{Lemma:Anders}
Let $T$ be a triangulation of $\A$, please keep the notation from \cref{Lemma:convex} for $K_{\Vector{\eta}}$ and $B$. If a given vector $\Log|\CC|$ satisfies
\begin{align*}
	\left\langle \Vector{\zeta}, B^{T} \Log \CC \right\rangle \ > \ \log(m) \norm{B \Vector{\zeta}}_1
\end{align*}
for all $\Vector{\zeta} \in (B^{T}M(T))^{\circ}$, then we have $\Log|\CC| \in K_{\Vector{\eta}}$ for a vertex $\Vector{\eta}$ of $\Delta_\A$ which satisfies 
$M(T) \subseteq \NormalCone{\Vector{\eta}}$. 
\end{lemma}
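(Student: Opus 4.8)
The goal is to certify that $\Log|\CC|$ lies in the amoeba–complement component $K_{\Vector{\eta}}$, where $\Vector{\eta}$ is the vertex of $\Delta_{\A}$ furnished by \cref{Lemma:Mixed-cell-cone} with $M(T)\subseteq\NormalCone{\Vector{\eta}}$. Since $\ker B^{T}$ is contained in every such component, this is equivalent to $B^{T}\Log|\CC|\in B^{T}K_{\Vector{\eta}}$, and as $B^{T}K_{\Vector{\eta}}$ is convex it suffices, by \cref{Lemma:convex} together with the Horn--Kapranov description recalled in \cref{subsection:Horn-Kapranov}, to show that $B^{T}\Log|\CC|$ lies strictly on the $K_{\Vector{\eta}}$-side of each hyperplane $H_{\Vector{\zeta}}$ of \eqref{hyperplane-A} with $\Vector{\zeta}\in(B^{T}\NormalCone{\Vector{\eta}})^{\circ}$; that is, that
\[
 \langle\Vector{\zeta},\,B^{T}\Log|\CC|\rangle \ > \ \sum_{i=1}^{m}\langle\Vector{b(i)},\Vector{\zeta}\rangle\,\log\abs{\langle\Vector{b(i)},\Vector{\zeta}\rangle}
\]
for every such $\Vector{\zeta}$. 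Using the cone chain $(B^{T}\NormalCone{\Vector{\eta}})^{\circ}\subseteq(B^{T}M(T))^{\circ}$ recorded just before the lemma (which follows from $M(T)\subseteq\NormalCone{\Vector{\eta}}$), it is enough to verify this for all $\Vector{\zeta}\in(B^{T}M(T))^{\circ}$, which is precisely the set quantified over in the hypothesis.

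The heart of the proof is then the estimate
\[
 \sum_{i=1}^{m}\langle\Vector{b(i)},\Vector{\zeta}\rangle\,\log\abs{\langle\Vector{b(i)},\Vector{\zeta}\rangle} \ \leq \ \log(m)\,\norm{B\Vector{\zeta}}_{1},
\]
valid for every nonzero $\Vector{\zeta}$; combined with the hypothesis $\langle\Vector{\zeta},B^{T}\Log|\CC|\rangle>\log(m)\norm{B\Vector{\zeta}}_{1}$ it yields the required strict separation. To prove it I would first observe that every term in the chain scales linearly in $\Vector{\zeta}$ under positive scalings: the column sums of $B$ vanish, i.e. $\sum_i\Vector{b(i)}=\Vector{0}$, which makes the (seemingly quadratic) log-sum homogeneous of degree one --- the same computation that makes $H_{\Vector{\zeta}}$ independent of the scale of $\Vector{\zeta}$. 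Hence I may normalize $\Vector{\zeta}$ so that $\max_i\abs{\langle\Vector{b(i)},\Vector{\zeta}\rangle}=1$, staying inside the cone $(B^{T}M(T))^{\circ}$. Writing $t_i=\langle\Vector{b(i)},\Vector{\zeta}\rangle$, the vanishing column sums give $\sum_i t_i=0$; setting $S:=\sum_{t_i>0}t_i=\sum_{t_i<0}\abs{t_i}$ we get $\norm{B\Vector{\zeta}}_{1}=2S$, and $S\geq 1$ because a Gale dual has full column rank (so $B\Vector{\zeta}\neq\Vector{0}$) and the coordinate of absolute value $1$ then lies in one of the two balanced groups. The terms with $t_i>0$ satisfy $t_i\leq 1$, hence $t_i\log t_i\leq 0$; for the terms with $t_i<0$, writing $q_i:=\abs{t_i}/S$ (a probability vector on at most $m$ indices) identifies $\sum_{t_i<0}\abs{t_i}\log(1/\abs{t_i})$ with $S\bigl(-\sum q_i\log q_i-\log S\bigr)$, which is $\leq S\log m$ since the Shannon entropy of $q$ is at most $\log m$ and $\log S\geq 0$. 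Adding the two contributions gives $\sum_i t_i\log\abs{t_i}\leq S\log m\leq\log(m)\norm{B\Vector{\zeta}}_{1}$.

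Assembling the steps, the hypothesis forces $B^{T}\Log|\CC|$ to lie strictly on the interior side of every supporting hyperplane of the convex set $B^{T}K_{\Vector{\eta}}$, so $B^{T}\Log|\CC|\in B^{T}K_{\Vector{\eta}}$ and hence $\Log|\CC|\in K_{\Vector{\eta}}$, with $\Vector{\eta}$ the vertex satisfying $M(T)\subseteq\NormalCone{\Vector{\eta}}$ from \cref{Lemma:Mixed-cell-cone}. I expect the delicate point to be the reduction in the first paragraph --- namely, knowing that the contour hyperplanes $H_{\Vector{\zeta}}$ with $\Vector{\zeta}\in(B^{T}\NormalCone{\Vector{\eta}})^{\circ}$ really do exhaust, up to closure and in normal direction, the supporting hyperplanes of this particular amoeba complement component, and keeping the polar-cone orientations consistent with \cref{Lemma:convex}; this is exactly what the discussion preceding the lemma supplies. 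The remaining, purely quantitative, content is the entropy estimate, whose only subtlety is choosing the normalization of $\Vector{\zeta}$ compatibly with the $\norm{B\Vector{\zeta}}_{1}$ occurring in the statement; the slack $\log(m)=\log(\#\A)$ is precisely the cost of replacing the contour by this outer bound.
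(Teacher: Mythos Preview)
Your proof is correct and follows essentially the same route as the paper: reduce to membership in $B^{T}K_{\Vector{\eta}}$, invoke \cref{Lemma:convex} to identify the supporting hyperplanes as the $H_{\Vector{\zeta}}$ with $\Vector{\zeta}\in(B^{T}\NormalCone{\Vector{\eta}})^{\circ}\subseteq(B^{T}M(T))^{\circ}$, and then beat the entropy-type right-hand side by $\log(m)\norm{B\Vector{\zeta}}_{1}$. The only difference is in how you prove the estimate $\sum_i t_i\log|t_i|\le\log(m)\norm{B\Vector{\zeta}}_1$: the paper isolates this as a separate lemma (\cref{usefulestimate}, via \cref{nice}) by writing $B\Vector{\zeta}=(\x,-\Vector{y})$ and applying the entropy bound symmetrically so that the $\norm{\x}_1\log\norm{\x}_1$ and $\norm{\Vector{y}}_1\log\norm{\Vector{y}}_1$ terms cancel, yielding the slightly sharper $\frac{1}{2}\norm{B\Vector{\zeta}}_1\log(m)$ without any normalization; your version normalizes to $\max_i|t_i|=1$, kills the positive part by $t\log t\le 0$ on $(0,1]$, and handles the negative part by entropy plus $S\ge 1$. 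Both are valid and give what is needed.
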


The proof of \cref{Lemma:Anders} will follow after we make some observations. We first note a basic observation on entropy type sums. 
\begin{lemma} \label{nice}
Let $\x \in \mathbb{R}_{\geq 0}^{d}$ be a vector with nonnegative entries. Then, we have
\begin{align*}
	\norm{\x}_1 \log \norm{\x}_1 - \log(d) \norm{\x}_1 \leq \sum_{i=1}^d x_i \log (x_i)  \leq \norm{\x}_1 \log \norm{\x}_1,
\end{align*}
where $\norm{\x}_1=\sum_{i=1}^d |x_i|$ represents the $\ell_1$-norm of the vector $\x$. 
\end{lemma}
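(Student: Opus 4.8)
The plan is to reduce the inequality to the classical two-sided bound on Shannon entropy by normalizing $\x$ to a probability vector. First I would dispose of the trivial case $\x = \Vector{0}$: under the convention $0\log 0 = 0$ all three quantities in the statement vanish, so I may assume $s := \norm{\x}_1 > 0$ and set $p_i := x_i/s$, which gives $p_i \geq 0$ and $\sum_{i=1}^d p_i = 1$. The algebraic core is then the identity
\begin{align*}
\sum_{i=1}^d x_i \log x_i \ = \ \sum_{i=1}^d s\, p_i\bigl(\log s + \log p_i\bigr) \ = \ s\log s \ + \ s\sum_{i=1}^d p_i \log p_i ,
\end{align*}
obtained by expanding $\log(s p_i) = \log s + \log p_i$ and using $\sum_i p_i = 1$; the indices $i$ with $x_i = 0$ contribute $0$ to every term. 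Dividing by $s > 0$, the lemma becomes equivalent to the bound $-\log d \leq \sum_{i=1}^d p_i \log p_i \leq 0$ for an arbitrary probability vector $(p_1,\dots,p_d)$.

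The upper bound is immediate, since $p_i \in [0,1]$ forces $p_i \log p_i \leq 0$ for each $i$. For the lower bound I would use the elementary inequality $\log t \leq t - 1$ for $t > 0$: applying it with $t = 1/(d\, p_i)$ at each index with $p_i > 0$ and multiplying by $p_i$ yields $-p_i\log(d\,p_i) \leq \frac{1}{d} - p_i$; summing over the (at most $d$) such indices and using $\sum_i p_i = 1$ gives $-\log d - \sum_{i=1}^d p_i \log p_i \leq 0$, i.e. $\sum_{i=1}^d p_i \log p_i \geq -\log d$. This is nothing but the nonnegativity of the Kullback--Leibler divergence of $(p_i)$ from the uniform distribution. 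Substituting the two entropy bounds into the displayed identity and multiplying back by $s = \norm{\x}_1$ recovers exactly the asserted chain $\norm{\x}_1\log\norm{\x}_1 - \log(d)\norm{\x}_1 \leq \sum_{i=1}^d x_i\log x_i \leq \norm{\x}_1\log\norm{\x}_1$.

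I do not expect a genuine obstacle here. The only step requiring mild care is the bookkeeping around vanishing entries of $\x$, which is handled by the $0\log 0 = 0$ convention and by restricting all sums to the support of $\x$; the sole nontrivial ingredient, the lower bound on $\sum_i p_i\log p_i$, is standard and follows from the one-line estimate $\log t \leq t - 1$.
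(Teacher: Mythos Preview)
Your proof is correct and follows essentially the same route as the paper: normalize $\x$ to a probability vector $\Vector{y} = \x/\norm{\x}_1$, rewrite $\sum_i x_i\log x_i$ via the identity, and invoke the standard entropy bound $0 \le H(\Vector{y}) \le \log d$. The only difference is cosmetic---you supply the elementary $\log t \le t-1$ justification for $H(\Vector{y}) \le \log d$ and handle the $\x=\Vector{0}$ case explicitly, whereas the paper simply cites the entropy bound as well known.
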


\begin{proof}
Let $\Vector{y}:=\frac{\x}{\norm{\x}_1}$. Since $\norm{\Vector{y}}_1=1$, and it has nonnegative entries, we can see $\Vector{y}$ as a discrete probability distribution supported on $d$ strings. As usual
$H(\Vector{y})=\sum_{i=1}^d -y_i\log(y_i) $ is the entropy of $\Vector{y}$, and it is well-known that $H(\Vector{y}) \leq \log(d)$ \cite{codingtheory}. So, we have
\begin{align*}  
	H(\Vector{y}) \ = \ \frac{1}{\norm{\x}_1}\left( \sum_{i=1}^d x_i \log\norm{\x}_1 - x_i \log(x_i) \right) \leq \log(d).  
\end{align*}
This gives us the following inequality
\begin{align*} 
	\log \norm{\x}_1 \sum_{i=1}^d x_i \ \leq \ \log(d) \norm{\x}_1 +  \sum_{i=1}^d x_i \log (x_i),
\end{align*}
which proves the left-hand side inequality in the claim. The right-hand side is obvious.
\end{proof}

Now we derive the following useful estimate based on \cref{nice}.

\begin{lemma} \label{usefulestimate}
Let $\A$ be the support set, and let $B$ be the $ m \times (m-2n)$ Gale dual. Then, for every $\Vector{\zeta} \in \R^{m-2n}$ we have
\begin{align*} 
	-\frac{1}{2} \norm{B \Vector{\zeta}}_1 \log(m) \ \leq \ \sum_{i=1}^m \langle \Vector{b(i)} , \Vector{\zeta} \rangle \log \abs{\langle \Vector{b(i)} , \Vector{\zeta} \rangle}   \ \leq \  \frac{1}{2} \norm{B \Vector{\zeta}}_1 \log(m).
\end{align*}
\end{lemma}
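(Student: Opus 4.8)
The plan is to reduce this to \cref{nice} by splitting the sum over $i$ according to the sign of $\langle \Vector{b(i)}, \Vector{\zeta} \rangle$. First I would set $x_i := \abs{\langle \Vector{b(i)}, \Vector{\zeta}\rangle}$ so that the sum in question becomes $\sum_i \pm x_i \log(x_i)$ where the sign is $\sign\langle \Vector{b(i)}, \Vector{\zeta}\rangle$. Partition $[m]$ into the index set $P$ where $\langle \Vector{b(i)}, \Vector{\zeta}\rangle > 0$, the set $N$ where it is negative, and the set $Z$ where it is zero (contributing nothing, with the convention $0\log 0 = 0$). Then the full sum equals $\sum_{i \in P} x_i \log x_i - \sum_{i \in N} x_i \log x_i$.

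Next I would apply \cref{nice} to each of the two vectors $(x_i)_{i \in P}$ and $(x_i)_{i \in N}$ separately. The key arithmetic fact I would invoke is that the column sums of $B$ vanish, i.e.\ $\sum_{i=1}^m \langle \Vector{b(i)}, \Vector{\zeta}\rangle = 0$ for every $\Vector{\zeta}$ (this is exactly the defining property of a Gale dual used in \cref{subsection:Horn-Kapranov}). Hence $\sum_{i \in P} x_i = \sum_{i \in N} x_i$, and since $\norm{B\Vector{\zeta}}_1 = \sum_{i\in P} x_i + \sum_{i \in N} x_i$, each of these two partial $\ell_1$-sums equals exactly $\tfrac12 \norm{B\Vector{\zeta}}_1$. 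Writing $S := \tfrac12\norm{B\Vector{\zeta}}_1$, \cref{nice} gives $S\log S - \log(\# P)\, S \le \sum_{i\in P} x_i\log x_i \le S\log S$ and likewise for $N$ with $\# N$ in place of $\# P$.

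Now subtract: the upper bound on $\sum_{i\in P} x_i\log x_i$ together with the lower bound on $\sum_{i\in N} x_i\log x_i$ yields
\begin{align*}
	\sum_{i=1}^m \langle \Vector{b(i)}, \Vector{\zeta}\rangle \log\abs{\langle \Vector{b(i)}, \Vector{\zeta}\rangle}
	\ \le \ S\log S - \bigl(S\log S - \log(\# N)\, S\bigr) \ = \ \log(\# N)\, S \ \le \ \log(m)\, S,
\end{align*}
using $\# N \le m$; and symmetrically the lower bound is $-\log(\# P)\, S \ge -\log(m)\, S$. Substituting back $S = \tfrac12\norm{B\Vector{\zeta}}_1$ gives precisely the claimed two-sided estimate $-\tfrac12\norm{B\Vector{\zeta}}_1\log(m) \le \sum_i \langle \Vector{b(i)},\Vector{\zeta}\rangle\log\abs{\langle\Vector{b(i)},\Vector{\zeta}\rangle} \le \tfrac12\norm{B\Vector{\zeta}}_1\log(m)$.

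I do not expect a genuine obstacle here; the only point requiring a little care is the bookkeeping of which inequality from \cref{nice} (upper vs.\ lower) to use on $P$ versus $N$ when forming the difference, and handling the degenerate cases $P = \emptyset$ or $N = \emptyset$ (in which case $B\Vector{\zeta} = \Vector{0}$ by the vanishing column-sum property and both sides are zero) and the convention $0\log 0 = 0$ for indices in $Z$. The conceptual content is entirely the balancing identity $\sum_{i\in P} x_i = \sum_{i\in N} x_i$ forced by $B$ being a Gale dual, which is what produces the clean factor of $\tfrac12$.
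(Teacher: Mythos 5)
Your proof is correct and is essentially the same as the paper's: both split the sum according to the sign of $\langle \Vector{b(i)}, \Vector{\zeta}\rangle$, use the vanishing-column-sum property of the Gale dual $B$ to get the balancing identity $\sum_{i\in P} x_i = \sum_{i\in N} x_i = \tfrac12\norm{B\Vector{\zeta}}_1$, and then apply \cref{nice} to each half. The paper writes this as $B\Vector{\zeta}=(\Vector{x},-\Vector{y})$ with $\Vector{x},\Vector{y}\geq 0$ rather than via index sets $P,N,Z$, but the argument is the same; your version is just slightly more explicit about which side of \cref{nice} is used where and about the degenerate cases.
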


\begin{proof}
By construction, every element in the column space of $B$ has the sum of its coordinates equal to zero. So, for every $\Vector{\zeta} \in \mathbb{R}^{m-2n}$ the sum of the entries of 
$B \Vector{\zeta}$ is zero. That is,
\begin{align*}  
	\sum_{i=1}^m \langle \Vector{b(i)} , \Vector{\zeta} \rangle \ = \ \mathrm{0},
\end{align*}
where $\Vector{b(i)}$ represents rows of the matrix $B$. We write $B \Vector{\zeta} = (\x,-\Vector{y})$ for some $\x$ and $\Vector{y}$ that are nonnegative in all coordinates, so we have $\norm{\x}_1=\norm{\Vector{y}}_1=\frac{1}{2}\norm{B\Vector{\zeta}}_1$.  We also observe
\begin{align*} 
	\sum_{i=1}^m \langle \Vector{b(i)} , \Vector{\zeta} \rangle \log \abs{\langle \Vector{b(i)} , \Vector{\zeta} \rangle} \ = \ \sum_{i=1}^{m_1} x_i \log(x_i) - \sum_{i=1}^{m_2} y_i \log(y_i).
\end{align*}
Note that  $m_1$ and $m_2$ in the above expression are both less than $m$. Using \cref{nice} and $\norm{\x}_1=\norm{\Vector{y}}_1=\frac{1}{2}\norm{B\Vector{\zeta}}_1$ gives us the following estimate:
\begin{equation} 
 - \frac{1}{2}\norm{B \Vector{\zeta}}_1 \log(m) \ \leq \ \sum_{i=1}^m \langle \Vector{b(i)} , \Vector{\zeta} \rangle \log \abs{\langle \Vector{b(i)} , \Vector{\zeta} \rangle}   \ \leq \ \frac{1}{2} \norm{B \Vector{\zeta}}_1 \log(m).
\end{equation}
\end{proof}

\begin{proof}[Proof of \cref{Lemma:Anders}]
Using \cref{usefulestimate} and the hypothesis of \cref{Lemma:Anders} we have
\begin{align*}
	\langle \Vector{\zeta} , B^{T} \Log|\CC| \rangle \ > \ \log(m) \norm{B \Vector{\zeta}}_1 \ > \ \sum_{i=1}^m \langle \Vector{b(i)}, \Vector{\zeta} \rangle \log \abs{\langle \Vector{b(i)} , \Vector{\zeta} \rangle}
\end{align*}
for all $ \Vector{\zeta} \in (B^{T}M(T))^{\circ} $. (Note that $(B^{T}\NormalCone{\Vector{\eta}})^{\circ} \subset (B^{T}M(T))^{\circ}$.) 
By \cref{Lemma:convex},  we know that the supporting hyperplanes of  $K_{\Vector{\eta}}$ are of the form 
\[ H_{\Vector{\zeta}} \ := \ \left\{  \x \in \R^{m-2n} : \langle \Vector{\zeta} , \x \rangle = \sum_{i}^m  \langle \Vector{b(i)}, \Vector{\zeta}  \rangle \log \abs{ \langle \Vector{b(i)} , \Vector{\zeta} \rangle}  \right\} \]
for some $\Vector{\zeta} \in (B^{T}\NormalCone{\Vector{\eta}})^{\circ} $.  So, these two facts together imply that  $B^{T}\Log|\CC|$ and the shifted copy of $\NormalCone{\Vector{\eta}}$ are not separated by any supporting hyperplane of  $B^{T}K_{\Vector{\eta}}$.  This means $B^{T}\Log|\CC| \in B^{T}K_{\Vector{\eta}}$. Since the kernel of $B^{T}$ is included in $K_{\Vector{\eta}}$ this also implies $\Log|\CC| \in K_{\Vector{\eta}}$.
\end{proof}
\subsection{Putting things together} \label{finalshowdown}
Now we complete the proof of \cref{Proposition:MainCertificateOurAlgorithm}. Recall that by definition
$ \langle \tau , B^{T} \Log \CC \rangle \ = \ \langle B \tau, \Log \CC \rangle$,  and $B(B^{T} M(T))^{\circ} \subseteq M(T)^{\circ}$. So, if a given vector $\Log|\CC|$ satisfies
\begin{align} \label{patch-test}
 \langle \Vector{\zeta} , \Log \CC  \rangle > \log(m) \norm{\Vector{\zeta}}_1  
\end{align}
for all $\Vector{\zeta} \in M(T)^{\circ}$, then by \cref{Lemma:Anders} we have that  $\Log|\CC| \in K_{\Vector{\eta}}$ for a vertex $\Vector{\eta}$ of $\Delta_\A$ which satisfies  $M(T) \subseteq \NormalCone{\Vector{\eta}}$. 

Suppose $M(T)^{\circ}$ is generated by $\Vector{\zeta(1)},\ldots,\Vector{\zeta(L)}$, and assume for a given vector $\Log|\CC|$ we have
\[ \langle \Vector{\zeta(i)} , \Log \CC \rangle > \log(m) \norm{\Vector{\zeta(i)}}_1  \]
for all $i=1,2,\ldots,L$. Then for any  $\x \in M(T)^{\circ}$ with $\x=\sum t_i \Vector{\zeta(i)}$ with $t_i \geq 0$ one has the following inequality
\[ \langle  \Log \CC , \x \rangle \  > \ \log(m) \sum  t_i \norm{\Vector{\zeta(i)}}_1 \geq \log(m) \norm{\x}_1 \]
where the last inequality follows from the triangle inequality. Hence, checking the condition in \cref{Proposition:MainCertificateOurAlgorithm}  only for the generators of $M(T)^{\circ}$ suffices to guarantee 
$ \langle  \Log|\CC| , \x \rangle \  > \log(m) \norm{\x}_1$ for all $x \in M(T)^{\circ}$. At this point we proved that the following: If the hyptohesis of \cref{Proposition:MainCertificateOurAlgorithm} is satisfied, then \cref{Lemma:Anders}  and  \cref{Lemma:convex} show that for all $\Vector{v} \in M(T)$ the ray $\lambda \Vector{v} +  \Log \abs{C}$ for $\lambda \in [0. \infty)$ does not intersect the amoeba of $\Delta_{\A}$. 

Now let $\Gamma$ be a face of $\A$ that is not an irrelevant face. Let $T|_{\Gamma}$ be  the restriction of the triangulation of $T$ on $\Gamma$. By \cref{Lemma:Mixed-cell-cone} the circuit inequalities generating  $M(T|_{\Gamma})^{\circ}$ are included in $M(T)^{\circ}$.  We also observe that $\log(\# \A) \geq \log(\# \Gamma)$. This implies that the criterion of \cref{Proposition:MainCertificateOurAlgorithm} also ensures the ray  $\Vector{v} + \lambda \Log \abs{C}$ does not intersect the amoeba of $\Delta_{\Gamma}$. Using \cref{toriclimit} completes the proof.
\section{Real Polyhedral Homotopy}
\begin{algorithm}[t]
\caption{Real Polyhedral Homotopy}\label{alg:rph}
	\begin{algorithmic}[1]
		\STATE {\bfseries Input} $A_1,A_2,\ldots,A_n \subseteq \mathbb{Z}^n$, $\CC_i \in \R^{A_i}$ for $i=1,2,\ldots,n$ \\
		$\Vector{p}=(p_1,p_2,\ldots,p_n)$ and $p_i=\sum_{\alpha \in A_i} \CC_{i,\alpha} x^{\alpha}$.
		\STATE {\bfseries Initialize} $\A=A_1*A_2*\ldots*A_n$, $\CC=(\CC_1,\CC_2,\ldots,\CC_n)$
		\STATE Set $T$ to be the triangulation of $\A$ that is induced by the lifting function $\Log \CC$\\
		Compute mixed-cells of $T$
		\STATE List generators of $M(T)^{\circ}$
		\IF{For all $\tau \in M(T)^{\circ}$, 
		$$ \langle \Log \CC , \tau \rangle > \log(\# \A) \norm{\tau}_1 $$}
		\STATE Compute real zeros of binomial systems given by mixed-cells of $T$
		\STATE Pick a vector $\Vector{v} \in M(T) - \partial M(T)$ or set $\Vector{v}= \Log \CC$
		\STATE Track solution paths	$x(t)$ for $\Vector{v}$ as in \cref{toriclimit} from $t=0$ to $t=1$ 
		\STATE {\bfseries Output}  $\varietyrealnonzero{\Vector{p}}$
		\ELSE
		\STATE Print ``Input system is not certifiably patchworked"
		\ENDIF
	\end{algorithmic}
\end{algorithm}
In this section we summarize the main steps of our real polyhedral homotopy algorithm. The algorithm follows the common thread of homotopy continuation algorithms, but it operates entirely over the real numbers.

The idea of the algorithm is as follows: Given a polynomial system $\Vector{p}=(p_1,p_2,\ldots,p_n)$ with support sets $A_1,A_2,\ldots,A_n \subseteq \mathbb{Z}^n$ and coefficient vectors $\CC_i \in \R^{\# A_i}$ for $i=1,2,\ldots,n$, we concatanete the support set and coefficient vectors as $\A=A_1*A_2*\ldots*A_n$, $\CC=(\CC_1,\CC_2,\ldots,\CC_n)$. Then, we compute triangulation $T$ of $\A$ with respect to lifting function $\Log \CC$. This step is performed using Jensen's tropical homotopy algorithm as explained \cref{subsection:anders}. Using Jensen's algorithm makes the generators of the cone $M(T)^{\circ}$ readily available. Then we check if the criterion of \cref{Proposition:MainCertificateOurAlgorithm} is satisfied by the vector $\Log \CC$. 
If the criterion is not satisfied, algorithms halts and prints ``Input system is not certifiably patchworked". If the criterion is satisfied, we then find real zeros of binomial systems that correspond to mixed cells of $T$ as explained in \cref{subsection:binomials}. After that we pick a vector $\Vector{v} \in M(T) - \partial M(T)$, this can be done in a multitude of ways e.g. availing to multiplicative updates method, or one can simply set $\Vector{v}=\Log \CC$ since the fact  $\Log \CC \in  M(T) - \partial M(T)$ is already certified. Then we track the solution paths $x(t)$  corresponding to $\Vector{v}$ as in \cref{toriclimit} from $t=0$ to $t=1$. This numerical tracking step is discussed in \cref{subsection:pathtrackers}. The correctness of the algorithm follows from \cref{Proposition:MainCertificateOurAlgorithm} and \cref{toriclimit}.  We give an example showing how the algorithm performs in practice. 
\vspace{-0.1 in}
\begin{example}
\label{Exa:OurAlgorithmInPractice}
	We reconsider the polynomials presented in \cref{Exa:ViroPatchworkingCompleteIntersection}, but this time we fix the coefficients to be real numbers instead of using coefficients that are Puiseux series.
	\begin{align*}
		& f \ = \ x_2^3 - (0.45)x_1x_2^2 - (0.45)^5x_1^2x_2 + (0.45)^{12}x_1^3 - (0.45)x_2^2 + (0.45)^4 x_1 x_2 - (0.45)^9 x_1^2 \\
		&- (0.45)^5 x_2 - (0.45)^9 x_1 + (0.45)^{12},  \\
		& g \ = \ (0.45)^8 x_2^2 - (0.45)^6 x_1 x_2 + (0.45)^6 x_1^2 - (0.45)^3 x_2 - (0.45)^2 x_1 + 1.
	\end{align*}
	This leads to the following support and, using log-absolute values of the coefficients, the following lifting vectors:
	\begin{align*}
		& \text{\texttt{Support f:}} \ \text{\texttt{2$\times$10 Array}$\{$\texttt{Int64,2}$\}$}:
		\left[\begin{array}{cccccccccc}
			0  & 1 & 2 & 3 & 0 & 1 & 2 & 0 & 1 & 0 \\
			3 & 2 & 1 & 0 & 2 & 1 & 0 & 1 & 0 & 0 \\
		\end{array}\right] \\
		& \text{\texttt{Lifting f:}} \
		\left[\begin{array}{cccccccccc}
			0 & 1 & 5 & 12 & 1 & 4 & 9 & 5 & 9 & 12\\
		\end{array}\right] \\
		& \text{\texttt{Support g:}} \ \text{\texttt{2$\times$6 Array}$\{$\texttt{Int64,2}$\}$}:
		\left[\begin{array}{cccccc}
			0 & 1 & 2 & 0 & 1 & 0 \\
			2 & 1 & 0 & 1 & 0 & 0 \\
		\end{array}\right] \\
		& \text{\texttt{Lifting g:}} \
		\left[\begin{array}{cccccc}
			8 & 6 & 6 & 3 & 2 & 0 \\
		\end{array}\right].	
	\end{align*}	
What we did so far corresponds to initialization step of the algorithm (step 2). Now we need to compute mixed cells, and list generators of the dual mixed-cell cone (step 3 and step 4). There are six mixed cells and corresponding circuit inequalities. These mixed cells are depicted in the right picture of \cref{Figure:ViroPatchworking}. After verifying our system is patchworked (this is step 5 in the algorithm), we pass to step 6:  For every one of these mixed cells, we obtain a binomial system, which we then solve using Hermite normal form, e.g., the first mixed-cell is represented by 
	\begin{align*}
	\text{\texttt{volume:} } 1 \qquad \text{\texttt{indices:} }  \text{\texttt{Tuple}$\{$\texttt{Int64,Int64}$\}$}[(2, 1), (5, 6)] \qquad \text{\texttt{normal:} }[-2.0, -1.0]
	\end{align*}
	with a solution for the corresponding binomial system given by
	\begin{align*}
		[4.938271604938272, 2.2222222222222223].
	\end{align*}
	Similarly, we obtain five further solutions for the five other binomial systems corresponding to the other mixed cells. 
	{\small\begin{align*}
		& [4.938271604938272, -0.20249999999999999] \quad [4.938271604938272, -0.041006249999999994] \\
		& [24.386526444139612, 10.973936899862824] \quad [24.386526444139612, -1.0]  \\
		& [24.386526444139612, 0.09112500000000004].
	\end{align*}}
For step 7 we simply pick $v= \Log C$. We perform step 8 using the polyhedral homotopy continuation in \textsc{Homotopy.JL}. After a total runtime of roughly 0.0001 seconds\footnote{Carried out on a MacBook Pro, Intel i5-5257U, 2.70GHz, 8GB RAM.}, we arrive to step 9. Here are  the six real solutions for the original system:
\begin{align*}
		& [4.20818, 2.41707] \quad [7.12063, -0.138875] \quad [6.94337, -0.0383256] \\
		& [49.3211, 24.3919] \quad [15.9697, -0.517115] \quad [17.5735, 0.0244792].
\end{align*}
\end{example}
\vspace{-0.15 in}
\section{Remarks on Complexity} \label{section:complexity}
In this section we discuss complexity aspects of the real polyhedral homotopy algorithm. Our goal in this section is to identify key parameters that governs the complexity of the RPH algorithm. Our main finding is that the complexity of the real polyhedral homotopy algorithm is controlled by the number of mixed cells in triangulation of the Cayley configuration $\A=A_1*A_2*\ldots*A_n$ that is introduced by using the coefficients as a lifting function. We also show that the number of mixed cells admits an $O(t^n)$ upper bound where $t$ is the maximal number of terms in $A_i$ for $1 \leq i \leq n$. So if the number of variables $n$ is considered to be fixed, and the number of terms $t$ is a variable, the discrete computations in RPH takes polynomial time. 

In general, the discrete part of the RPH corresponds to computing mixed cells of a polyhedral subdivision that is induced by a fixed lifting; without worrying about the volumes the mixed cells. Hence, any complexity theoretic upper and lower bounds for computing mixed cells (without volumes) applies to discrete computations in our algorithm. For the numerical part; the number of paths tracked by RPH is dramatically smaller than of complex homotopy algorithms. However, as noted in the introduction we are not able to provide a rigorous complexity analysis for the numerical part of the algorithm for the time being.

\subsection{Tropical homotopy algorithm}
We start this section with bounding the number of inequalities needed to describe a mixed cell.
\begin{lemma} \label{J1}
Let $A_1,A_2,\ldots,A_n$ be point configurations with at most $t$ elements, and let $T$ be a triangulation of $\A=A_1 * A_2 * \ldots * A_n$. Then a 
mixed cell $\sigma \in T$ is determined in the mixed-cell cone $M(\sigma)$ by at most $n(t-2)$ inequalities.
\end{lemma}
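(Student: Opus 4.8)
I want to count the defining inequalities of $M(\sigma)$ for a mixed cell $\sigma$ in a triangulation $T$ of the Cayley configuration $\A = A_1 * \cdots * A_n$, where $|A_i| \le t$. The starting observation is that $M(\sigma)$, like every cone arising from polyhedral subdivisions, is cut out by inequalities indexed by circuits; here it is cleaner to argue directly. Since $\sigma$ is a mixed cell, in the fine mixed subdivision language $\sigma$ is the Minkowski sum of $n$ edges, one edge $e_i = \operatorname{conv}\{\Vector{a}_{i}, \Vector{b}_{i}\}$ coming from each $A_i$; equivalently $\sigma$ picks out exactly two points $\{\Vector{a}_i, \Vector{b}_i\}$ from each $A_i$. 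The condition ``$\sigma$ is a mixed cell of $\Delta_\omega$'' is a condition on the lifting $\omega \in \R^{\A}$, and I claim it is equivalent to a finite list of linear inequalities: for $\sigma$ to appear as a (mixed) cell of the regular subdivision induced by $\omega$, the lifted points $\{(\Vector{a}_i,\omega), (\Vector{b}_i,\omega)\}_{i\in[n]}$ must span a lower (here, upper) face of $\operatorname{conv}(\A^\omega)$, i.e. there is an affine functional that is $\le$ (resp. $\ge$) on these $2n$ lifted points with equality, and strictly below on every other lifted point $(\Vector{c},\omega(\Vector{c}))$ with $\Vector{c} \in \A \setminus \sigma$.

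**Key steps.** First I would set up coordinates: fix the affine hull of the $2n$ points of $\sigma$ inside $\R^{n+k-1}$ (with $k=n$ here), which because $\sigma$ is a full-dimensional simplex of the Cayley polytope is $2n-1$-dimensional, and observe that once $\omega$ is given the values $\omega(\Vector{a}_i), \omega(\Vector{b}_i)$ determine a unique affine functional $\ell_\omega$ agreeing with the lifting on all $2n$ vertices of $\sigma$. Then ``$\sigma$ is a face of $\operatorname{conv}(\A^\omega)$'' becomes: for every remaining point $\Vector{c} \in \A \setminus \sigma$ (there are at most $nt - 2n = n(t-2)$ of them, since $\A$ has at most $nt$ points and $\sigma$ uses exactly $2n$), the inequality $\omega(\Vector{c}) \le \ell_\omega(\Vector{c})$ holds (the ``upper face'' orientation may flip the sign; the count is unaffected). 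Each such inequality is linear in $\omega$, since $\ell_\omega$ depends linearly on the $\omega$-values at the $2n$ vertices of $\sigma$. This gives at most $n(t-2)$ inequalities, and they already force $\sigma$ to be a cell; the defining conditions for it to be specifically a \emph{mixed} cell (two points from each $A_i$) are built into the choice of $\sigma$, not extra inequalities on $\omega$. Hence $M(\sigma)$ is described by at most $n(t-2)$ inequalities. Finally I would note that some of these $\Vector{c}$ may be affinely dependent on $\sigma$ in a degenerate way or lie outside the relevant region, which can only reduce the count, so the bound $n(t-2)$ stands.

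**Main obstacle.** The delicate point is justifying that ``$\sigma$ is a mixed cell of $\Delta_\omega$'' is genuinely equivalent to this list of $n(t-2)$ single-point inequalities, rather than requiring inequalities indexed by circuits or by larger subsets of $\A$ — i.e. that local regularity at $\sigma$ (each excluded point lies on the correct side of the unique affine functional pinned by $\sigma$) already implies global regularity of the subdivision in a neighborhood of $\sigma$, so that $\sigma$ persists as a cell. This is standard for regular subdivisions (a simplex $\sigma$ with vertices lifted by $\omega$ is a face of $\operatorname{conv}(\A^\omega)$ iff every other lifted point is on the correct side of the affine span of $\sigma^\omega$), but I would want to state it carefully and cite the regular-subdivision machinery from \cite{triangulations}, and make sure that when $\A^\omega$ has affine dependencies the ``mixed cell'' terminology is still consistent. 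Once that equivalence is in hand the counting is immediate: $\#\A - \#\sigma \le nt - 2n = n(t-2)$.
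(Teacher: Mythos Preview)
Your approach is correct and essentially identical to the paper's proof sketch: both observe that $\sigma$ being a mixed cell of $\Delta_\omega$ is equivalent to the lifted simplex $\sigma^\omega$ being a facet of $\conv(\A^\omega)$, which amounts to one linear inequality per point of $\A \setminus \sigma$, and since $|\A| \le nt$ and $|\sigma| = 2n$ this gives at most $n(t-2)$ inequalities. Your ``affine functional $\ell_\omega$'' inequality for each excluded point $\Vector{c}$ is precisely what the paper calls the circuit inequality on the circuit formed by $\Vector{c}$ together with the $2n$ vertices of $\sigma$, so your worry in the obstacle paragraph is already resolved.
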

\begin{proof}[Proof sketch]
The mixed cell cone describes the case where the simplex corresponding to the mixed cell is a facet of the lifted Cayley polytope. 
So, for every element $\Vector{\alpha} \in \A$ we get a circuit inequality given by $2n$ many vertices of the mixed cell and $\Vector{\alpha}$ that determines whether $\Vector{\alpha}$ is contained in the mixed cell.
In total we have at most $n(t-2)$ many such $\Vector{\alpha}$, and at most that many corresponding circuit inequalities.
\end{proof}

This immediately yields the following corollary.

\begin{corollary} \label{J2}
Let $A_1,A_2,\ldots,A_n$ be point configurations with at most $t$ elements, and let $T$ be a triangulation of $\A= A_1 * A_2 * \ldots * A_n$ with $k$ mixed-cells.
Then the mixed-cell cone $M(T)$ can be described by at most $kn(t-2)$ many linear inequalities all supported on circuits.
\end{corollary}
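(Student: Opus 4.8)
The statement of Corollary~\ref{J2} is an immediate bookkeeping consequence of Lemma~\ref{J1}, so the plan is simply to assemble the pieces that are already in place. First I would recall the defining identity from the preceding subsection, namely that the mixed-cell cone of the triangulation is the intersection of the mixed-cell cones of its individual mixed cells:
\begin{align*}
	M(T) \ = \ \bigcap_{\{\sigma \,:\, \sigma \text{ is a mixed cell of } T\}} M(\sigma).
\end{align*}
Thus a finite list of linear inequalities describing $M(T)$ is obtained by concatenating finite lists describing each $M(\sigma)$.

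Next I would invoke Lemma~\ref{J1}: for each mixed cell $\sigma \in T$, the cone $M(\sigma)$ is cut out by at most $n(t-2)$ linear inequalities, each supported on a circuit of $\A$ (the circuit formed by the $2n$ vertices of $\sigma$ together with one additional point $\Vector{\alpha} \in \A$). Since $T$ has exactly $k$ mixed cells by hypothesis, summing over all of them produces a description of $M(T)$ by at most $k \cdot n(t-2) = kn(t-2)$ linear inequalities, and each of these inequalities is supported on a circuit because each inequality in each $M(\sigma)$ is. This yields the claimed bound.

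There is essentially no obstacle here; the only mild subtlety worth a sentence is that taking the union of the inequality systems for the $M(\sigma)$'s is exactly correct for an intersection of cones, and that redundancy among the inequalities (an inequality for one mixed cell may coincide with, or be implied by, one for another) only helps — it can lower the count but never raise it, so the upper bound $kn(t-2)$ is safe. One could also note that the same circuit $Z$ may appear for several different mixed cells, so the honest count of \emph{distinct} circuit inequalities is at most $kn(t-2)$ but possibly smaller; since the statement only asserts an upper bound, this refinement is not needed for the proof.
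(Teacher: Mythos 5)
Your proposal is correct and matches the paper's (implicit) argument: the paper simply says Lemma~\ref{J1} "immediately yields" the corollary, and you have spelled out the expected bookkeeping — $M(T) = \bigcap_\sigma M(\sigma)$ together with the per-cell bound of $n(t-2)$ circuit inequalities from Lemma~\ref{J1}, multiplied over the $k$ mixed cells. Nothing more is needed.
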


The proof of \cref{patch_few} gives us an upper bound the number of mixed-cells. Using this rough upper bound we derive the following corollary. 
\begin{corollary} \label{roughcount}
Let $A_1,A_2,\ldots,A_n$ be point configurations with at most $t$ elements, and let $T$ be a triangulation of $\A=A_1 * A_2 * \ldots * A_n$.
Then the mixed-cell cone $M(T)$ can be described by at most $2ne^n(t-1)^{n+1}$ many linear inequalities all supported on circuits. 
\end{corollary}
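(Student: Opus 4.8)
The plan is to combine the circuit-inequality description of the mixed-cell cone from \cref{J2} with the bound on the number of mixed cells of $T$ coming from the proof of \cref{patch_few}, and then to absorb the leftover factor $t-2$ into a power of $t-1$.

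Concretely, I would first recall from \cref{J2} that if $T$ has $k$ mixed cells, then $M(T) = \bigcap_{\sigma} M(\sigma)$, the intersection running over the mixed cells $\sigma$ of $T$, and that each $M(\sigma)$ is cut out by at most $n(t-2)$ linear inequalities, every one of which is supported on a circuit of $\A$; hence $M(T)$ is described by at most $kn(t-2)$ circuit inequalities, and it remains only to bound $k$. For this I would invoke the estimate obtained inside the proof of \cref{patch_few}, namely that the triangulation $T$ of $\A = A_1 * \cdots * A_n$ carries at most $2e^n(t-1)^n$ mixed cells. Substituting and using the elementary inequality $t - 2 \le t - 1$ then yields
\begin{align*}
	k\,n\,(t-2) \ \le \ 2e^n(t-1)^n \cdot n\,(t-2) \ \le \ 2n\,e^n\,(t-1)^{n+1},
\end{align*}
which is the asserted count, and all of these inequalities remain supported on circuits since the ones furnished by \cref{J2} already are.

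In this argument essentially everything is bookkeeping once the mixed-cell count is in hand: the only genuinely substantive input is the $O(e^n(t-1)^n)$ bound on the number of mixed cells from \cref{patch_few}, which itself rests on McMullen's upper bound theorem applied, via the Cayley trick, to the simplicial complex underlying $\A$. Thus the real obstacle has already been discharged by the time one reaches this corollary; the step here is just the multiplication above, together with a check that the constant propagates as claimed — if \cref{patch_few} produces a mixed-cell bound of the shape $C\,e^n(t-1)^n$, then the corollary's leading constant becomes $Cn$ in place of $2n$.
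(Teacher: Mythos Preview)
Your proposal is correct and matches the paper's own (implicit) argument: the paper simply states that the proof of \cref{patch_few} bounds the number of mixed cells by $2e^n(t-1)^n$, and plugging this into the $kn(t-2)$ estimate from \cref{J2} together with $t-2 \le t-1$ gives the claimed $2ne^n(t-1)^{n+1}$.
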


\cref{roughcount} gives an upper bound to the number of updates in the tropical homotopy algorithm: For a fixed number of variables $n$, it is polynomial in $t$.  This shows that the complexity of a mixed-cell cone computation is controlled by the cardinality of the support sets; this aligns well with Kushnirenko's fewnomial philosophy.

Jensen wrote a paper on implementation details of his algorithm for the purpose of mixed volume computation \cite{jensentropical2}. 
Thanks to real geometry, we do not need volumes, but only the mixed cells. So Jensen's current implementation does not output precisely what we need in this paper. A new implementation that outputs our needs in this paper is currently worked on by Timme. Real polyhedral homotopy is planned to be incorporated into \textsc{Homotopy.JL} \cite{HomotopyJL}. 

\subsection{Effective Viro's patchworking}
As explained in Jensen's paper \cite{jensentropical} and \cite[Lemma 5.1.13]{triangulations}, every circuit inequality is written by a vector with $n+2$ non-zero entries and every entry is given by the volume of a simplex. Since we can compute the volume of a simplex in $O(n^3)$ cost, we can compute each generator of a circuit inequality by $O(n^4)$ cost. This gives us the following basic complexity estimate as a corollary of \cref{J1} and \cref{J2}.
\begin{corollary}
Let $A_1,A_2,\ldots,A_n$ be point configurations with at most $t$ elements, and let $T$ be a triangulation of $\A= A_1 * A_2 * \ldots * A_n$ with $k$ mixed-cells. Then the criterion in \cref{Lemma:Anders} can be checked by $O(kn^5(t-2))$ many arithmetic operations.
\end{corollary}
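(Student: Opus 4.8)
The plan is to reduce the verification of the criterion to inspecting the \emph{generators} of the dual mixed-cell cone $M(T)^{\circ}$, to bound their number via \cref{J2}, and then to tally the arithmetic cost of evaluating a single inequality.

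First recall, from the proof of \cref{Proposition:MainCertificateOurAlgorithm}, that checking the criterion of \cref{Lemma:Anders} (equivalently, the condition \eqref{patch-test} for all $\Vector{\zeta} \in (B^{T}M(T))^{\circ}$) is implied by checking
\begin{align*}
	\langle \Vector{\zeta(i)} , \Log|\CC| \rangle \ > \ \log(m) \norm{\Vector{\zeta(i)}}_1
\end{align*}
merely for the generators $\Vector{\zeta(1)},\ldots,\Vector{\zeta(L)}$ of $M(T)^{\circ}$, since a nonnegative combination of such inequalities stays valid by the triangle inequality. Hence the work splits into (i) producing these generators and (ii) evaluating one such inequality cheaply.

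For (i): the extreme rays of $M(T)^{\circ}$ are precisely the circuit inequalities describing $M(T)$, so by \cref{J2} we have $L \leq kn(t-2)$. Each generator $\Vector{\zeta(i)}$ is, as recalled above (see \cite{jensentropical} and \cite[Lemma 5.1.13]{triangulations}), a vector in $\R^{\A}$ with exactly $n+2$ nonzero coordinates, each being a simplex volume; since a simplex volume in this dimension is computed in $O(n^3)$ arithmetic operations, assembling $\Vector{\zeta(i)}$ costs $O(n^4)$. For (ii): once $\Vector{\zeta(i)}$ is available, and noting that $\Log|\CC|$ is already on hand (it is the lifting fed to Jensen's algorithm) and $\log m$ is computed once, both $\langle \Vector{\zeta(i)} , \Log|\CC| \rangle$ and $\norm{\Vector{\zeta(i)}}_1$ are sums over the $n+2$ nonzero entries of $\Vector{\zeta(i)}$, hence cost $O(n)$, and the comparison is $O(1)$. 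Thus each of the $L$ inequalities costs $O(n^4)$ to set up and $O(n)$ to test, and the total is $L\cdot O(n^4) = O\big(kn(t-2)\cdot n^4\big) = O(kn^5(t-2))$, as claimed.

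The only substantive ingredients are already in place: the reduction to generators of $M(T)^{\circ}$ from \cref{Proposition:MainCertificateOurAlgorithm}, the bound $L\le kn(t-2)$ from \cref{J2}, and the sparse, volume-based description of circuit vectors. The rest is routine bookkeeping; I expect no real obstacle beyond being careful that the generators of $M(T)^{\circ}$ are identified with the facet normals counted in \cref{J2}, and that the cost of handling the logarithmic and coefficient data is genuinely absorbed into the one-time preprocessing rather than charged per inequality.
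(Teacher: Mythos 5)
Your proposal is correct and follows essentially the same route the paper takes (the discussion immediately preceding the corollary combined with \cref{J1} and \cref{J2}): bound the number of circuit inequalities by $kn(t-2)$, note that each generator of $M(T)^{\circ}$ has $n+2$ nonzero entries each given by a simplex volume computable in $O(n^3)$ operations, and multiply. The extra $O(n)$ you allocate to forming the inner product and $\ell_1$-norm per generator is a harmless refinement already absorbed into the $O(n^4)$ setup cost.
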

Using \cref{patch_few} one can provide upper bound for $k$ and hence deduce a $O(e^{n}n^5t^{n+1})$ upper bound for the number of arithmetic operations. 

\subsection{A fewnomial bound for patchworked polynomial systems} \label{section:fewnomial}
We start this section by stating a special case of McMullen's Upper Bound Theorem \cite{ziegler}.
\begin{theorem}[Upper Bound Theorem; special case] \label{upperbound}
Let $Q \subset \R^{2n}$ be a polytope with $t$ vertices. Then the number of facets of $Q$ is bounded by $2\binom{t-n}{n}$.
\end{theorem}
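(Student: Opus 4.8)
The plan is to obtain this as an immediate corollary of McMullen's full Upper Bound Theorem together with a one-line binomial estimate. The statement is really just a convenient simplification of the exact cyclic-polytope facet count, so the work is almost entirely bookkeeping.

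First I would recall the Upper Bound Theorem in its classical form (see \cite{ziegler}): among all $d$-dimensional polytopes with $t$ vertices, the cyclic polytope $C_d(t)$ simultaneously maximizes the number of $i$-faces for every $i$, and in particular it maximizes the number of facets. Since $Q \subset \R^{2n}$, it suffices to treat the full-dimensional case $\dim Q = 2n$: if $Q$ spans a proper affine subspace of dimension $d < 2n$, its number of facets is bounded by $f_{d-1}(C_d(t))$, and for the application (which is used only to prove \cref{few}) the full-dimensional case is the relevant one, so I would simply assume $\dim Q = 2n$. Thus the task reduces to bounding $f_{2n-1}\bigl(C_{2n}(t)\bigr)$; note this forces $t \ge 2n+1$.

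Next I would insert the explicit closed form for the facet number of an even-dimensional cyclic polytope. Writing $d = 2m$, one has
\[
  f_{d-1}\bigl(C_{2m}(t)\bigr) \ = \ \binom{t-m}{m} + \binom{t-m-1}{m-1},
\]
and specializing to $m = n$ gives $f_{2n-1}\bigl(C_{2n}(t)\bigr) = \binom{t-n}{n} + \binom{t-n-1}{n-1}$. Finally, Pascal's rule $\binom{t-n}{n} = \binom{t-n-1}{n} + \binom{t-n-1}{n-1}$ gives $\binom{t-n-1}{n-1} \le \binom{t-n}{n}$, hence
\[
  f_{2n-1}\bigl(C_{2n}(t)\bigr) \ \le \ \binom{t-n}{n} + \binom{t-n}{n} \ = \ 2\binom{t-n}{n},
\]
which is the claimed bound.

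There is no genuine obstacle here; the only care needed is to cite the correct facet formula for cyclic polytopes and, if one wants full rigor for non-full-dimensional $Q$, to verify that the lower-dimensional cyclic polytopes do not yield a larger quantity in the range $t \ge 2n+1$ — but since the application uses only the full-dimensional case, even that check can be omitted.
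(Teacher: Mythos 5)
Your proof is correct for the full-dimensional case, and it is the standard derivation: McMullen's Upper Bound Theorem reduces the question to cyclic polytopes, and the closed form $f_{2m-1}(C_{2m}(t))=\binom{t-m}{m}+\binom{t-m-1}{m-1}$ together with Pascal's rule gives the clean bound $2\binom{t-n}{n}$. Note, however, that the paper does not actually \emph{prove} this statement; it merely records it as ``a special case of McMullen's Upper Bound Theorem'' with a citation to Ziegler. So there is no in-paper argument to compare against — you have simply supplied the derivation the authors are implicitly relying on, and it is the right one.

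One point worth making explicit, since you flagged it but were perhaps too generous about it: the caveat on non-full-dimensional $Q$ is not merely a bookkeeping nuisance that ``can be omitted.'' The bound $2\binom{t-n}{n}$ genuinely \emph{fails} for lower-dimensional polytopes sitting inside $\R^{2n}$. For instance with $n=4$ and $t=10$, the claimed bound is $2\binom{6}{4}=30$, yet a $6$-dimensional cyclic polytope $C_6(10)$ (which embeds in $\R^8$) has $f_5=\frac{10}{7}\binom{7}{3}=50$ facets. So the assumption $\dim Q=2n$ is not optional — it is a necessary hypothesis that the paper's phrasing leaves implicit. You were right to restrict to that case, and right that it is the only case needed: in \cref{patch_few} the polytope in question is the lifted Cayley polytope $\conv(\A^{\omega})$, which for a generic (nonaffine) lifting $\omega$ is full-dimensional in $\R^{2n}$ whenever the Cayley configuration $\A$ itself spans $\R^{2n-1}$. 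You just should not suggest, even parenthetically, that the lower-dimensional check ``might'' also go through — it does not.
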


In the case of zero dimensional systems, Viro's method counts the number of common zeros in $(\R^{*})^n$. 
The discussions in \cref{subsection:patchworking} and in \cref{subsection:mixedcell}  show that for a patchworked polynomial system supported with point sets  $A_1,A_2,\ldots,A_n \subset \Z^n$, the number zeros in the positive orthant  is bounded by the number of mixed cells in the corresponding coherent polyhedral subdivision of $A_1+A_2+\ldots+A_{n}$. This yields the following statement. 

\begin{proposition}[Few Zeros for Patchworked Systems] \label{patch_few}
Let $A_1,A_2,\ldots,A_n \subset \Z^n$,  and let $\abs{A_1*A_2* \cdots * A_n} \leq tn$. 
Then for a patchworked polynomial system $\Vector{p}=(p_1,p_2,\ldots,p_n)$ supported with $A_1,A_2,\ldots,A_n$,  the number of common zeros of $\Vector{p}$ in $(\R^{*})^{n}$ is at most 
\begin{align*} 2^{n+1} \binom{tn-n}{n} .
\end{align*} 
\end{proposition}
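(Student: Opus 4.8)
The plan is to reduce the count of real zeros to a count of mixed cells, and then to bound that count by the number of facets of a lifted Cayley polytope via the Upper Bound Theorem. Throughout, write $\A = A_1 * A_2 * \cdots * A_n \subset \Z^{2n-1}$ and set $m := \#\A = \sum_{i=1}^n \#A_i \le tn$.

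First I would pass from zeros to mixed cells. By definition of a patchworked polynomial system, the zero set $\varietyrealnonzero{\Vector{p}}$ is homeomorphic to a complex produced by Viro's patchworking; since $\Vector{p}$ is zero-dimensional this complex is a finite set, so homeomorphism just means equal cardinality. Using the extension of \cref{Theorem:BerndViro} to $(\R^{*})^n$ obtained by applying it in each of the $2^n$ orthants and gluing the pieces (which, being a finite complex, merges no points), the cardinality of $\varietyrealnonzero{\Vector{p}}$ equals the number of pairs $(\varepsilon, \sigma)$ where $\varepsilon \in \{-1,+1\}^n$ is an orthant and $\sigma$ is a mixed cell of the coherent fine mixed subdivision $\Delta$ of $\sum_i \conv(A_i)$ underlying the patchworking that is alternating for $\varepsilon$; see \cref{subsection:patchworking} and \cref{subsection:mixedcell}. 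The crucial point is that $\Delta$ — and hence its set of mixed cells — does not depend on $\varepsilon$ (changing orthant flips the sign data $\varepsilon$ but not the lifting $\Log|\CC|$), so each of the $2^n$ orthants contributes at most the number of mixed cells of $\Delta$, giving $\#\varietyrealnonzero{\Vector{p}} \le 2^n \cdot \#\{\text{mixed cells of } \Delta\}$.

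Next I would bound the number of mixed cells. By the correspondence recalled in \cref{subsection:polyhedralcayley}, $\Delta$ arises from a coherent triangulation $T$ of $\A$, and every mixed cell of $\Delta$ is a full-dimensional simplex of $T$ (one using exactly two vertices of each $A_i$). Picking a lifting function $\omega$ inducing $T$, the full-dimensional simplices of $T$ are precisely the projections of the upper facets of the $2n$-dimensional polytope $\conv(\A^{\omega}) \subset \R^{2n}$, so the number of mixed cells of $\Delta$ is at most the total number of facets of $\conv(\A^{\omega})$. Since this polytope has at most $m \le tn$ vertices, \cref{upperbound} yields $\#\{\text{mixed cells of } \Delta\} \le 2\binom{m-n}{n} \le 2\binom{tn-n}{n} = 2\binom{n(t-1)}{n}$. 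Finally, combining this with $\binom{N}{n} \le N^n/n! \le N^n (e/n)^n$ gives $\binom{n(t-1)}{n} \le e^n(t-1)^n$, hence $\#\varietyrealnonzero{\Vector{p}} \le 2^n \cdot 2 e^n (t-1)^n = 2^{n+1} e^n(t-1)^n$.

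I expect the last two steps (the Upper Bound Theorem applied to $\conv(\A^{\omega})$ together with the elementary factorial estimate) to be entirely routine; the delicate point is the first step, namely justifying that for a patchworked system the real zeros really are enumerated, orthant by orthant, by the alternating mixed cells of a single fixed coherent subdivision, and in particular that the passage between orthants alters only the sign function and leaves the subdivision — and therefore the relevant combinatorial count — unchanged.
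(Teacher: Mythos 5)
Your proposal follows essentially the same route as the paper's own proof: reduce real zeros to alternating mixed cells per orthant (factor $2^n$), identify mixed cells with full-dimensional simplices of a coherent triangulation of the Cayley configuration $\A$, bound those by the facets of the lifted Cayley polytope in $\R^{2n}$ via McMullen's Upper Bound Theorem, and finish with the Stirling-type estimate $\binom{n(t-1)}{n}\le e^n(t-1)^n$. The only difference is cosmetic — you spell out the orthant-by-orthant reduction and the invariance of the subdivision across orthants more explicitly than the paper does, which is a reasonable elaboration.
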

\begin{proof}
Let $\omega$ be a lifting function and let $\Delta_{\omega}$ be the corresponding coherent fine mixed subdivision of $A_1+A_2+\ldots+A_n$.  The number of mixed cells  in $\Delta_{\omega}$ is equivalent to the number of corresponding simplices in the triangulation of the Cayley configuration $\A=A_1*A_2* \cdots *A_n$; see \cref{subsection:polyhedralcayley}. The simplices that correspond to mixed cells are the simplices with two vertices from each $A_i$. The number of all simplices in the triangulation is less than the number of facets in the lifted Cayley polytope $\Cayley(A^\omega) = \conv(\A^{\omega})$. $\Cayley(A^\omega)$ is contained in $\R^{2n}$, and it has the same number of vertices as $\A$.
So, the number of facets of $\Cayley(A^\omega)$ is bounded by \cref{upperbound}. 
We multiply this bound with $2^n$ to cover all orthants of $(\R^{*})^{n}$, and obtain the following upper bound
\begin{align*} 
	2^{n+1}\binom{tn-n}{n} \ \leq \ 2^{n+1} e^{n} (t-1)^n,  
\end{align*}
where the last inequality follows from Stirling's estimate.
\end{proof}

\subsection{Complexity of numerical path tracking} \label{section:numericalcomplexity}
Homotopy continuation theory of polynomials uses condition numbers to give bounds for the complexity of numerical iterative solvers \cite{condition}. Malajovich noticed that the current theory, which considers solutions of homogeneous polynomials over the projective space, fails to address subtleties of
sparse polynomial systems. He developed a theory of sparse Newton iterations \cite{m2}: For a given  sparse polynomial system $f$, Malajovich's theory uses two condition numbers $\mu(f,\x)$ and $v(\x)$ at a given point $\x \in (\mathbb{C}^{*})^n$, and it provides tools to analyze the accuracy and complexity of sparse Newton iterations. Let us state the main result of Malajovich below.

\begin{theorem}[Malajovich, \cite{m2}] \label{malajovich}
As in \cref{toriclimit}, let $\p_{\CC}(t,\x)$ be the polynomial system. Assume that we track a solution path from $\p_{\CC}(\varepsilon,\x)$ to $\p(\CC)(1,\x)$ where $\varepsilon >0$ is a sufficiently small real number. Then, there exists an algorithm which takes 
\begin{align*} \int_{\varepsilon}^{1} \mu ( \p_{\CC}(t,\x) , \Vector{z}_s ) \; v(\Vector{z}_s) \;  \left( \norm{ \dot{\p}_{\CC,s} }_{\p_{\CC,s}}^{2} + \norm{\dot{\Vector{z}}_s}_{\Vector{z}_s}^2 \right)^{\frac{1}{2}}      \; ds  \end{align*}
many iteration steps where $\Vector{z}_s$ represents the solution path, and $\norm{\cdot}_{\x}$ represents the local norms 
defined as pull-back of the classical Fubini-Study metric under the Veronese map. 
\end{theorem}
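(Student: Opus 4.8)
This statement is quoted from Malajovich's paper \cite{m2}; I outline the skeleton of the argument. The plan is to run an adaptive step-size homotopy continuation along the path $s \mapsto \bigl(\p_{\CC}(t(s),\cdot),\Vector{z}_s\bigr)$ from the system at $t=\varepsilon$ to the target at $t=1$, where the corrector is Malajovich's \emph{sparse Newton operator} rather than the projective Newton operator of Shub--Smale, and the step size is governed by the combined condition number $\mu(\p_{\CC}(t,\x),\Vector{z})\,v(\Vector{z})$. As in \cref{toriclimit}, the endpoint $t=\varepsilon$ plays the role of toric infinity, so the starting approximate zeros come from the binomial systems of the alternating mixed cells.

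\textbf{Step 1 (a sparse $\gamma$-theorem).} First I would establish the basin-of-attraction estimate: for a sparse system $f$ and a point $\Vector{z}$ whose distance to an exact zero $\Vector{\zeta}$ of $f$ --- measured in the local norm $\norm{\cdot}_{\Vector{z}}$ of the statement, i.e.\ the pull-back of the Fubini--Study metric under the Veronese map --- is at most a universal constant times $1/(\mu(f,\Vector{z})\,v(\Vector{z}))$, the sparse Newton iterates starting at $\Vector{z}$ converge quadratically to $\Vector{\zeta}$. The auxiliary condition number $v(\Vector{z})$ enters here because the Veronese image of the torus $(\C^{*})^n$ is a proper subvariety of projective space, so the higher-derivative (curvature) bounds of the classical $\gamma$-theorem must be corrected by a factor measuring how far $\Vector{z}$ lies from the boundary of the relevant toric chart.

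\textbf{Step 2 (self-bounding variation of the condition number).} The technical core is a Lipschitz-type estimate: along the path, the derivative with respect to $s$ of $\log\bigl(\mu(\p_{\CC,s},\Vector{z}_s)\,v(\Vector{z}_s)\bigr)$ is bounded, up to a universal constant, by $\mu(\p_{\CC,s},\Vector{z}_s)\,v(\Vector{z}_s)$ times the combined velocity $\bigl(\norm{\dot{\p}_{\CC,s}}_{\p_{\CC,s}}^{2}+\norm{\dot{\Vector{z}}_s}_{\Vector{z}_s}^{2}\bigr)^{1/2}$. One proves this by differentiating the defining expressions for the two sparse condition numbers along the path and invoking a condition-number perturbation inequality, the sparse analogue of the classical condition number theorem.

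\textbf{Step 3 (discretization and the Riemann sum).} Finally I would partition $[\varepsilon,1]$ into subintervals on each of which the path, measured with the weights of Step 2, moves by an amount comparable to the reciprocal of the local value of $\mu\,v$. Step 2 guarantees that the condition number at the right endpoint of a subinterval is within a constant factor of its value at the left endpoint, so by Step 1 a single sparse Newton correction from $\Vector{z}_{s_i}$ lands inside the basin of $\Vector{z}_{s_{i+1}}$; hence one corrector step per subinterval suffices. The number of subintervals is, up to the universal constant, the Riemann sum for the displayed integral, which yields the claimed bound on the number of iterations. The main obstacle is Step 2: obtaining the self-bounding derivative inequality for $\mu\,v$ requires first fixing the correct Hermitian structure --- the pull-back metric named in the statement --- and then controlling the simultaneous variation of the two condition numbers together with the toric geometry, which is exactly the content of Malajovich's construction of sparse Newton iterations; everything else follows the now-standard homotopy-continuation template.
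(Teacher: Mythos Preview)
The paper does not prove this theorem at all: it is merely quoted from Malajovich \cite{m2} as background for the complexity discussion in \cref{section:numericalcomplexity}, with no proof or sketch supplied. So there is nothing in the paper to compare your proposal against.

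That said, your outline is a faithful summary of the standard homotopy-continuation complexity template (Shub--Smale style $\gamma$-theorem, Lipschitz control of the condition number along the path, adaptive discretization into a Riemann sum), correctly adapted to the sparse/toric setting where the second condition number $v(\Vector{z})$ and the Veronese pull-back metric appear. This is indeed the architecture of Malajovich's argument in \cite{m2}. For the purposes of the present paper, however, no proof is expected; a citation suffices.
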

It is customary in the theory of homotopy continuation to go from an integral representation as above to a more comprehensible complexity estimate by considering average or smoothed analysis of the iteration process. This amounts to introduce a probability measure on $\p_{\CC}$, the input space of polynomials, and to compute the expectation of the integral estimate over the input space. Malajovich notes in his paper \cite{m2} that the non-existence of a unitary group action on the space of sparse polynomials makes the probabilistic analysis harder. In our opinion, $\mu(\p_{\CC}(t,\x))$ can be analyzed for general measures without group invariance \cite{alp1,alp2}. 
However, the second condition number $v(\x)$ seems hard to analyze; therefore we refrain from a probabilistic analysis for the moment. 

\begin{remark}
Gregorio Malajovich authored a 90 pages Arxiv paper that improves the state of the art \cite{malajovich2020}. Our hope is that these new results will pave the way for a rigorous complexity analysis of RPH, but until then what is written here represents our views.
\end{remark}

\section{Discussion and Outlook}

We discuss some open questions related to this work that are brought to our attention after the initial submission of the article on ArXiv:
\begin{enumerate}
 \item How successful is RPH on practical problems? For instance, how would it perform in problems concerning real polynomial systems coming from chemical reaction networks?
 \item Imagine the support sets $A_1,A_2,\ldots,A_n$ are fixed, and we use i.i.d Gaussian coefficients with unit variance to create a random polynomial system. Can one prove that with high probability 
 the random Gaussian polynomial system would pass your effective patchworking test?
 \item Is our algorithm better than the (complex) polyhedral homotopy algorithm?
 \item How is the comparison of real polyhedral homotopy with Khovanskii-Rolle continuation algorithm of Bates and Sottile?
\end{enumerate}

Regarding the first question: So far, we have only done a preliminary implementation and computed a few examples. The purpose of this article is to provide theoretical foundations for real polyhedral homotopy. We believe that a rigorous implementation and practical testing of the algorithm is crucial, but, given the magnitude of the task, it requires a second, separate article. 

Regarding the second question: In a special case there are explicit estimates that shows indeed with high probability a random polynomial system is a patchworked system \cite{maurice-extremal}. In general, this is a very intriguing question with far reaching consequences: A high probability positive answer would show that Viro's patchworking method captures an essential combinatorial structure that force  patterns on randomly generated systems of polynomial equations.

Regarding the third question: This question was brought to our attention by some colleagues, but the comparison between our algorithm and  the polyhedral homotopy algorithm does not seem to be meaningful. The goal of the two algorithms are different; RPH tracks only real zero paths, and polyhedral homotopy tracks all complex zeros. If one is interested in real roots only, then the advantage of our algorithm is to track correct number of real zero paths (sometimes called optimal path tracking), where most algorithms in the literature find all complex zeros and then filter the real ones. 

For the last question we first need to explain a notable algorithm of Bates and Sottile called \textit{\struc{Khovanskii-Rolle Continuation Algorithm (KR)}} \cite{bates-sottile}. KR admits a sparse polynomial system where every polynomial has at most $t$ terms, and traces at most
\begin{align*} 
	\frac{e^4+3}{4} 2^{\binom{(t-2)n}{2}} \binom{(t-2)n}{t-2,t-2,\ldots,t-2} \sim \exp \left(  t^2 n^2 \right) 
\end{align*}
many solution curves that can lead to real solutions \cite{bates-sottile,bihan-sottile-mixed}.  The number of paths are given by the best fewnomial bound in the literature, and to the best of our knowledge for mixed support the best bounds are in  \cite{bihan-sottile-mixed}.

On the one hand, RPH algorithm tracks polynomially many solution paths with respect to $t$, whereas the KR algorithm traces exponentially many solution curves. For instance, if one needs to solve a system of two bivariate polynomials both with $8$ different terms, the KR algorithm traces more than $2^{76}$ many curves, and RPH tracks less than $2^{12}$ many paths. On the other hand, we stress that the KR algorithm can solve \textit{all} input instances where RPH can only solve polynomials that are located against the discriminant variety. So, in our view these two algorithms are complementary to each other: for a given sparse systems one should use KR when RPH fails to admit the input.

\bibliographystyle{amsalpha}
\bibliography{RealHomotopyContinuation}

\end{document}